\documentclass[a4paper]{article}
\usepackage{eucal}
\usepackage{amsmath,amsthm,amssymb}
\usepackage{amsfonts}
\usepackage{latexsym}
\usepackage{graphicx}
\usepackage{amsrefs}
\usepackage{enumerate}
\usepackage{array}
\usepackage[all,2cell]{xy}
\UseAllTwocells

\usepackage{tikz-cd}


\usepackage{bm}
\usepackage{pb-diagram}
\usepackage{pb-xy}

\usepackage{hyperref}
\hypersetup
{
    colorlinks,	%
    citecolor=black,%
    filecolor=black,%
    linkcolor=black,%
    urlcolor=black
}

\usepackage{time}

%

\newcommand{\cD}{\mathcal{D}}
\newcommand{\cE}{\mathcal{E}}

\newcommand{\cP}{\mathcal{P}}

\newcommand{\cT}{\mathcal{T}}

\newcommand{\category}[1]{\mathbf{#1}}


\newcommand{\Z}{{\mathbb{Z}}}

\newcommand{\Ext}{\operatorname{Ext}}

\newcommand{\pr}{\mathrm{pr}}

\newcommand{\Triv}{\category{Triv}}
\newcommand{\Cof}{\category{Cof}}
\newcommand{\Fib}{\category{Fib}}
\newcommand{\Prj}{\category{Prj}}
\newcommand{\Inj}{\category{Inj}}
\newcommand{\Cntr}{\category{Cntr}} 
\newcommand{\ho}{\mathrm{ho}}
\newcommand{\splt}{\mathrm{split}} 

\newcommand{\la}{\langle}
\newcommand{\ra}{\rangle}

\newcommand{\set}[2]{%
\left\{#1 \,\middle|\, #2\right\}
}

\newcommand{\coequalizer}[5]{\xymatrix{
 {\displaystyle #1} 
 \ar@<1ex>[r]^-{#2} \ar@<-1ex>[r]_-{#3}
 & {\displaystyle #4} \ar[r] & {\displaystyle #5}
 }%
}
\newcommand{\equalizer}[5]{
\xymatrix{
 {\displaystyle #1} \ar[r] & {\displaystyle #2}
 \ar@<1ex>[r]^-{#3} \ar@<-1ex>[r]_-{#4}
 & {\displaystyle #5} 
 }%
}


\newcommand{\rarrow}[1]{\buildrel #1 \over \longrightarrow}

\newcommand{\lMod}[1]{{#1}\text{-}\mathbf{Mod}}

\newcommand{\lmod}[1]{{#1}\text{-}\mathbf{mod}}



\newcommand{\SemiPrj}{\category{SemiPrj}}
\newcommand{\QiPrj}{\category{QiPrj}}
\newcommand{\HoPrj}{\category{HoPrj}}
\newcommand{\Perf}{\cP\mathrm{erf}}

\newcommand{\Quism}{\category{Quism}}
















\newcommand{\Ker}{\operatorname{Ker}}

\newcommand{\Coker}{\operatorname{Coker}}

\newcommand{\Ima}{\operatorname{Im}}


\theoremstyle{plain}
\newtheorem{thm}{Theorem}[section]
\newtheorem{theorem}[thm]{Theorem}

\newtheorem{proposition}[thm]{Proposition}
\newtheorem{lemma}[thm]{Lemma}

\newtheorem{corollary}[thm]{Corollary}

\theoremstyle{definition}

\newtheorem{definition}[thm]{Definition}

\newtheorem{convention}[thm]{Convention}

\newtheorem{example}[thm]{Example}

\theoremstyle{remark}

\newtheorem{remark}[thm]{Remark}

\numberwithin{equation}{section}

\ifx\undefined\bysame
\newcommand{\bysame}{\leavemode\hbox to3em{\hrulefill}\,}
\fi


\setlength{\textheight}{22cm}
\setlength{\textwidth}{16cm}
\setlength{\oddsidemargin}{0cm}
\setlength{\topmargin}{0cm}
\setlength{\evensidemargin}{0cm}
\setlength{\headheight}{0cm}
\setlength{\headsep}{0cm}


\title{Cotorsion Pairs in Hopfological Algebra}
\date{}
\author{Mariko Ohara and Dai Tamaki}

\begin{document}

\maketitle

\begin{abstract}
 In an intriguing paper \cite{math/0509083} Khovanov proposed a
 generalization of homological algebra, called Hopfological algebra.
 Since then, several attempts have been made to import tools and
 techiniques from homological algebra to Hopfological algebra.
 For example, Qi \cite{1205.1814} introduced the notion of cofibrant
 objects in the category $\bm{C}_{A,H}^{H}$ of $H$-equivariant modules
 over an $H$-module algebra $A$, which is a counterpart to the category
 of modules over a dg algebra, although he did not define a model
 structure on $\bm{C}_{A,H}^{H}$. 

 In this paper, we show that there exists an Abelian model structure on
 $\bm{C}_{A,H}^{H}$ in which cofibrant objects agree with Qi's cofibrant
 objects under a slight modification.
 This is done by constructing cotorsion pairs in $\bm{C}_{A,H}^{H}$
 which form a Hovey triple in the sense of Gillespie \cite{1512.06001}. 
 This can be regarded as a Hopfological analogues of the
 work of Enochs, Jenda, and Xu \cite{Enochs-Jenda-Xu1996} and
 Avramov, Foxby, and Halperin \cite{Avramov-Foxby-Halperin}. 
 By restricting to compact cofibrant objects, we obtain a Waldhausen
 category $\Perf_{A,H}^{H}$ of perfect objects. By taking invariants of this
 Waldhausen category, such as algebraic $K$-theory, Hochschild homology,
 cyclic homology, and so on, we obtain Hopfological analogues of these
 invariants.
\end{abstract}

\tableofcontents

\section{Introduction}
\label{introduction}

Khovanov \cite{math/0509083} proposed a generalization of homological
algebra, called Hopfological algebra, based on finite dimensional Hopf
algebras. 
An important observation of Khovanov is that the existence of an
integral in a finite dimensional Hopf algebra $H$ allows us to define an
analogue of chain homotopy and homology in the category $\lMod{H}$ of
left $H$-modules, with which an analogue of homological algebra can be
developed, generalizing the fact that the category of chain complexes
can be identified with the category of $\Z$-graded modules over the
exterior Hopf algebra $\Lambda(d)$.

More generally, a differential graded algebra, dg algebra for short, $A$
is nothing else but a $\Z$-graded $\Lambda(d)$-module algebra.
Given a finite dimensional Hopf algebra $H$ over a field $k$ and a left
$H$-module algebra $A$, Khovanov proposed to study the category
$\bm{C}_{A,H}^{H}$ of $H$-equivariant left $A$-modules (see
\S\ref{module_category} for a precise definition) as a
generalization of homological algebra over a dg algebra.
Following Khovanov's proposal, Qi \cite{1205.1814} defined the derived
category of compact objects $\cD^{c}(A,H)$ and defined the Grothendieck
group $K_{0}(A,H)$ of the pair $(A,H)$ as the Grothendieck group of
$\cD^{c}(A,H)$.

The analogy between homological and Hopfological algebra can be
summarized in the following table. 
\begin{center}
 \begin{tabular}{|c|c|} \hline
  homological algebra & Hopfological algebra \\ \hline \hline
  chain complex & $H$-module \\ \hline
  chain map & $H$-module homomorphism \\ \hline
  chain homotopy & homotopy defined by an integral $\lambda$ \\ \hline
  homology $H(M)=\Ker d/\Ima d$ & homology $H(M)=M^{H}/\lambda\cdot M$ \\ \hline
  dg algebra & $H$-module algebra \\ \hline
  dg category & $H$-module category \\ \hline
  left module over a dg category  & $H$-equivariant left module over an
      $H$-module category  \\
  \hline 
\end{tabular}
\end{center}

Qi also proposed in Remark 7.17 of his paper to define and study the
higher algebraic $K$-theory of $\cD^{c}(A,H)$ by using the method
introduced by Thomason and Trobaugh \cite{Thomason-Trobaugh1990}, in
which the algebraic $K$-theory of derived categories (in the sense of
usual homological algebra) is defined.
However, the definition of $\cD^{c}(A,H)$ is quite different from the
usual definition of the derived category of a dg algebra or a dg
category, since Qi did not use homology.

Recall that the algebraic $K$-theory of a dg category $A$ is defined as
the Waldhausen $K$-theory of the category $\Perf_{A}$ of compact
cofibrant objects in the category $\lMod{A}$ of left $A$-modules by
introducing a model structure on $\lMod{A}$. See To{\"e}n's lecture note
\cite{Toen2011}, for example.
For an $H$-module algebra $A$, Qi introduced the notion of cofibrant
objects in $\bm{C}_{A,H}^{H}$ and proved the existence of a functorial
cofibrant replacement functor without introducing a model structure.

This aim of this paper is to construct a model structure on
$\bm{C}_{A,H}^{H}$ in which cofibrant objects agree with Qi's cofibrant
objects under a slight modification. The modification is needed because
of the difference of weak equivalences. We use isomorphisms of homology,
while Qi used isomorphisms in the stable category of $H$-modules.

Since $\bm{C}_{A,H}^{H}$ is an Abelian category, we should make use of
Hovey's theory of Abelian model structures \cite{Hovey2002-2}. By using
the terminology of Gillespie \cite{1512.06001}, given an Abelian category
$\bm{A}$, Hovey found a one-to-one correspondence between Abelian model
structures on $\bm{A}$ and Hovey triples.
Recall that a Hovey triple in $\bm{A}$ is a triple $(\Cof,\Triv,\Fib)$
of subcategories such that both $(\Cof,\Triv\cap \Fib)$ and
$(\Cof\cap\Triv,\Fib)$ are complete cotorsion pairs and $\Triv$ is a thick
subcategory. Recall also that cotorsion pairs are defined in terms of
the orthogonality with respect to the biadditive functor
$\Ext^{1}_{\bm{A}}(-,-)$. See \S\ref{model_category} for details.

In the case of the category of chain complexes, and, more generally,
in the category of left modules over a dg algebra, the corresponding
orthogonality has been studied by Enochs, Jenda, and Xu 
\cite{Enochs-Jenda-Xu1996} and Avramov, Foxby, and Halperin
\cite{Avramov-Foxby-Halperin} in detail.
As is stated in Hovey's paper, their results lead to a Hovey triple
which gives rise to the standard model structure on such categories, in
which cofibrant objects are semiprojective modules.

We introduce the notions of $\Sigma$-semiprojective
objects (Definition \ref{def:equivariant_projectives}) and
$\Sigma$-quasi-isomorphisms (Definition 
\ref{def:quism_for_H-module}) in 
$\bm{C}_{A,H}^{H}$ that are 
analogues of semiprojective modules and quasi-isomorphisms in the dg
context and show that they form a part of a Hovey triple. 

\begin{theorem}
 \label{main_theorem:model_structure}
 Let $H$ be a finite dimensional non-semisimple Hopf algebra over a
 field and $A$ a left $H$-module category. 
 Denote the full subcategories of $\bm{C}_{A,H}^{H}$ consisting of
 $\Sigma$-semiprojective objects and of those objects that are
 $\Sigma$-quasi-isomorphic to $0$ by $\SemiPrj_{\Sigma}$ and 
 $\Triv_{A,H}^{\Sigma}$, respectively.

 Then the triple
 $(\SemiPrj_{\Sigma},\Triv_{A,H}^{\Sigma},\bm{C}_{A,H}^{H})$ is a Hovey
 triple and thus defines an Abelian model structure on
 $\bm{C}_{A,H}^{H}$ in which weak equivalences are
 $\Sigma$-quasi-isomorphisms, cofibrant objects are
 $\Sigma$-semiprojective modules, and all objects are fibrant.
\end{theorem}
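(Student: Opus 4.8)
The strategy is to verify directly the three conditions in Gillespie's characterization of a Hovey triple: (i) $(\SemiPrj_\Sigma, \Triv_{A,H}^\Sigma \cap \bm{C}_{A,H}^H) = (\SemiPrj_\Sigma, \Triv_{A,H}^\Sigma)$ is a complete cotorsion pair; (ii) $(\SemiPrj_\Sigma \cap \Triv_{A,H}^\Sigma, \bm{C}_{A,H}^H)$ is a complete cotorsion pair; and (iii) $\Triv_{A,H}^\Sigma$ is a thick subcategory of $\bm{C}_{A,H}^H$. Since the fibrant class is the whole category, its right orthogonal is the class of injective-like objects relative to $\Ext^1$, so condition (ii) reduces to showing that $\SemiPrj_\Sigma \cap \Triv_{A,H}^\Sigma$ is exactly the left orthogonal of all of $\bm{C}_{A,H}^H$, i.e.\ the class of projective objects in $\bm{C}_{A,H}^H$ in the relevant sense, together with completeness — equivalently, that every object admits a short exact sequence with a trivially-cofibrant object on the left and an arbitrary object on the right, and that every object receives an admissible epimorphism from a trivially-cofibrant one. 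I expect the first half of condition (ii) to follow from the identification of $\Triv_{A,H}^\Sigma \cap \SemiPrj_\Sigma$ with a class of "$\Sigma$-contractible" or "projective-and-acyclic" modules, analogous to the fact that, over a dg algebra, contractible semiprojective modules are precisely the projective objects of $\lMod A$.

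The bulk of the work is condition (i): that $(\SemiPrj_\Sigma, \Triv_{A,H}^\Sigma)$ is a complete cotorsion pair. I would break this into four parts. First, \emph{orthogonality}: $\Ext^1_{\bm{C}_{A,H}^H}(P, T) = 0$ for $P$ $\Sigma$-semiprojective and $T$ $\Sigma$-quasi-isomorphic to $0$ — this should be essentially the defining lifting property of $\Sigma$-semiprojective objects (Definition \ref{def:equivariant_projectives}) applied to short exact sequences, mimicking the dg statement that $\Ext^1(P, -)$ vanishes on acyclics when $P$ is semiprojective. Second, $\SemiPrj_\Sigma = {}^\perp(\Triv_{A,H}^\Sigma)$ and $\Triv_{A,H}^\Sigma = \SemiPrj_\Sigma^\perp$ (the "closure" direction): given $M$ with $\Ext^1(M, T) = 0$ for all $\Sigma$-acyclic $T$, one must produce the $\Sigma$-semiprojective structure on $M$, and symmetrically; here I would imitate the filtration/Eklof-lemma argument used by Enochs--Jenda--Xu and Avramov--Foxby--Halperin, building $\Sigma$-semiprojective objects as transfinite extensions of "cells" (shifts of free $H$-equivariant $A$-modules) and invoking a version of Eklof's lemma valid in $\bm{C}_{A,H}^H$. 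Third, \emph{completeness}: every $M$ fits in short exact sequences $0 \to T \to P \to M \to 0$ with $P \in \SemiPrj_\Sigma$, $T \in \Triv_{A,H}^\Sigma$, and $0 \to M \to T' \to P' \to 0$ with $T' \in \Triv_{A,H}^\Sigma$, $P' \in \SemiPrj_\Sigma$ — the first of these is basically Qi's functorial cofibrant replacement (\cite{1205.1814}), adapted to the homology-based weak equivalences, while the second is the "special $\SemiPrj_\Sigma$-preenvelope" and is likely the subtlest point, requiring either a cogenerator argument or a small-object-type construction. Fourth, I'd want $\SemiPrj_\Sigma$ closed under the operations (retracts, extensions, transfinite compositions of the relevant admissible monos) needed for the cotorsion pair to be functorially complete, which again parallels the dg case.

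For condition (iii), thickness of $\Triv_{A,H}^\Sigma$, I would show: $\Triv_{A,H}^\Sigma$ is closed under retracts (immediate from additivity of homology $H(M) = M^H/\lambda\cdot M$ and the fact that a retract of a $\Sigma$-acyclic object has vanishing homology in each $\Sigma$-twist), and it satisfies the two-out-of-three property for short exact sequences — i.e.\ if two of the three terms of $0 \to M' \to M \to M'' \to 0$ are $\Sigma$-acyclic then so is the third. This last point follows from the long exact sequence in Hopfological homology associated to a short exact sequence of $H$-equivariant modules (the analogue, via Khovanov's homology functor, of the long exact sequence of a short exact sequence of chain complexes), applied in each $\Sigma$-shift. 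The main obstacle, I expect, is the \emph{completeness} of the cotorsion pair $(\SemiPrj_\Sigma, \Triv_{A,H}^\Sigma)$ — specifically constructing special preenvelopes — and the careful bookkeeping needed to reconcile Qi's stable-category weak equivalences with the homology-based $\Sigma$-quasi-isomorphisms used here; the rest is a fairly faithful transcription of the Enochs--Jenda--Xu / Avramov--Foxby--Halperin machinery into the Hopfological setting, using the homology functor of Khovanov in place of ordinary homology of complexes.
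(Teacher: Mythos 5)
Your overall skeleton---thickness of $\Triv_{A,H}^{\Sigma}$, identification of $\SemiPrj_{\Sigma}\cap\Triv_{A,H}^{\Sigma}$ with the projectives of $\bm{C}_{A,H}^{H}$, and the cotorsion pair $(\SemiPrj_{\Sigma},\Triv_{A,H}^{\Sigma})$---is exactly the paper's, and your argument for thickness (closure under retracts plus the long exact homology sequence in each $\Sigma$-twist) is Lemma \ref{Triv*_is_thick} verbatim. Where you genuinely diverge is in the mechanism for the orthogonality identifications and for completeness. You propose the Enochs--Jenda--Xu/Eklof route: realize $\SemiPrj_{\Sigma}$ as retracts of transfinite extensions of cells and manufacture special precovers and preenvelopes by a small-object-type construction. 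The paper instead exploits the adjunctions $C\dashv U\dashv E$: since $E(M)=(\lMod{k})(H,M)$ is always $\Sigma$-acyclic (Lemma \ref{P_is_acyclic}), any $P\in{}^{\perp}\Triv_{A,H}^{\Sigma}$ has $U(P)$ projective over $\underline{A}$, and the mapping-cone formalism of $\cT_{A,H}^{H}$ (Lemmas \ref{Hopf_mapping_cone}, \ref{mapping_cone_of_null_homotopic_map}, \ref{mapping_cone_and_A-hom}) then gives ${}^{\perp}\Triv_{A,H}^{\Sigma}=\SemiPrj_{\Sigma}$ (Proposition \ref{orthogonal_to_TrivH}) and $(\SemiPrj_{\Sigma})^{\perp}=\Triv_{A,H}^{\Sigma}$, the latter by testing against the single object $A\otimes k$; completeness is never constructed by hand but discharged abstractly by Lemma \ref{Abelian_Hovey_triple_generated_by_Triv}, using that $\bm{C}_{A,H}^{H}\cong\lMod{(A\#H)}$ is Grothendieck with enough projectives. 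Your route would buy functorial factorizations and cofibrant generation; the paper's buys a much shorter argument that never has to exhibit a generating set of cells.

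Two concrete cautions about your plan. First, ``$\Ext^{1}(P,T)=0$ is essentially the defining lifting property'' is not immediate from Definition \ref{def:equivariant_projectives}: given $0\to T\to E\to P\to 0$ with $T$ $\Sigma$-acyclic, you must first know that $U(P)$ is $\underline{A}$-projective, so that the sequence is $A$-split and hence a triangle making $E\to P$ a surjective $\Sigma$-quism, and you must then pass from surjectivity of $\bm{C}_{A,H}(P,-)$ to surjectivity of the invariants $\bm{C}_{A,H}^{H}(P,-)$ via Lemma \ref{surjectivity}; this is precisely the content of Lemma \ref{Sigma-semiprojectivity}. Second, the Eklof/filtration step requires a set of cells generating the cotorsion pair, and since $\Sigma$ is invertible only up to stable equivalence (Lemma \ref{Sigma}) it is not clear what the analogues of spheres and disks should be; this is the point at which your route could genuinely fail, and the paper's adjunction argument is designed to avoid it.
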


By analogy, we call compact cofibrant objects in our model category
\emph{perfect objects}.  
The full subcategory $\Perf_{A,H}^{H}$ of perfect objects 
has a structure of a Waldhausen category.
We propose to call the algebraic $K$-theory of this Waldhausen category
the algebraic $K$-theory of the pair $(A,H)$. 
Besides algebraic $K$-theory, the Waldhausen category $\Perf_{A,H}^{H}$
allows us to extend invariants of dg categories, such as Hochschild
homology, cyclic homology, and trace maps
between them. Their properties will be studied in a sequel to this paper.

Recall that there is another approach to the algebraic $K$-theory of dg 
categories, as is described in \S5.2 of Keller's article
\cite{math/0601185}. 
Given a dg category $A$, Keller defines the algebraic $K$-theory of $A$
as the $K$-theory of the Waldhausen category of compact $A$-modules,
whose cofibrations are morphisms $i:L\to M$ which admits a retraction in
the category of $\underline{A}$-modules, where $\underline{A}$ is the
underlying (graded) linear category of $A$.

We may also define a Hopfological analogue of this construction by using
a cotorsion pair.
For a left $H$-module category $A$, we define a structure of an exact
category on $\bm{C}_{A,H}^{H}$ by declaring $\underline{A}$-split
extensions as exact sequences, where $\underline{A}$ is the linear
category obtained from $A$ by forgetting the $H$-action.
Let us denote this exact category by $\bm{C}_{A,H}^{H,\splt}$.
We show that the pair $(\bm{C}_{A,H}^{H,\splt},\Cntr_{A,H}^{H})$ is a
complete cotorsion pair, where $\Cntr_{A,H}^{H}$ is the full subcategory
consisting of objects that are homotopy equivalent to $0$ in the
category of left $H$-modules. See Proposition
\ref{cotorsion_pair_from_contractible_objects}.  
Although this cotorsion pair is not a part of a Hovey triple, a recent
work of Sarazola's \cite{1911.00613} allows us to construct a Waldhausen
category from this cotorsion pair and $\Triv_{A,H}^{\Sigma}$.
The Waldhausen subcategory of compact objects is another choice for
defining algebraic $K$-theory of $(A,H)$. 
We note that this is closer to Qi's approach to the
Grothendieck group of $\cD^{c}(A,H)$.

Finally we remark that Kaygun and Khalkhali \cite{math/0606341}
introduced another kind of ``projective'' modules for an $H$-module
algebra $A$, called $H$-equivariantly projective $A$-modules in their
paper. 
Their purpose is to define the Hopf-cyclic homology of $A$ by using the
exact category of $H$-equivariantly projective $A$-modules.
We may use this exact category to define an algebraic $K$-theory.
From this point of view, however, the action of $H$ is regarded as a
generalization of group actions, while, in Hopfological algebra, the
action of $H$ is a generalization of differentials.
Thus the $K$-theory obtained from $H$-equivariant projective $A$-modules 
should be regarded as a generalization of Thomason's equivariant
$K$-theory \cite{Thomason1987} and is different from ours.

\subsection*{Organization}

The rest of this paper consists of two sections.

\begin{itemize}
 \item \S\ref{preliminary} is preliminary.
       Notations and conventions used in this paper are listed in
       \S\ref{notation}. We recall basic properties of the category
       of $H$-equivariant $A$-modules $\bm{C}_{A,H}^{H}$ in
       \S\ref{module_category}. 
       Basics ideas in Hopfological algebra are recalled in 
       \S\ref{homological_algebra_in_stable_module_category}.
       And \S\ref{model_category} is a brief summary of Hovey's theory
       of Abelian model structures used in this paper. 

 \item \S\ref{cotorsion_pair} is the main part. In \S\ref{projectives},
       the notion of $\Sigma$-semiprojective modules and related
       structures are introduced and studied, with which  
       Theorem \ref{main_theorem:model_structure} is proved in
       \S\ref{orthogonality} by studying the orthogonality in
       the category $\bm{C}_{A,H}^{H}$.   
\end{itemize}

\subsection*{Acknowledgements}

The second author is supported by JSPS KAKENHI Grant Number JP20K03579.  

\section{Preliminaries}
\label{preliminary}

\subsection{Notations and conventions}
\label{notation}

In this paper, we fix a finite dimensional Hopf algebra $H$
over a field $k$. The coproduct, the counit, and the antipode are
denoted by $\Delta$, $\varepsilon$, and $S$, respectively.
We also fix a left integral $\lambda$ in $H$.

Other notations and conventions used in this paper are summarized in the
following list.

\begin{itemize}
 \item The tensor product over $k$ is denoted by $\otimes$.

 \item The category of $k$-modules is a symmetric monoidal category
       under $\otimes$. The morphism induced by a permutation
       $\sigma$ of $\{1,2,\ldots,n\}$ is denoted by
       \[
	\sigma : M_{1}\otimes \cdots \otimes M_{n} \rarrow{}
       M_{\sigma(1)}\otimes \cdots \otimes M_{\sigma(n)}.
       \]
       For example, the symmetric monoidal structure
       $M_{1}\otimes M_{2}\to M_{2}\otimes M_{1}$ is denoted by $(1,2)$.

 \item The category of left $H$-modules is denoted by $\lMod{H}$. The
       full subcategory of finitely generated $H$-modules is denoted by
       $\lmod{H}$. 
       Given left $H$-modules $\mu_{M}:H\otimes M\to M$ and
       $\mu_{N}: H\otimes N\to N$, the left $H$-action on $M\otimes N$
       is given by 
       \[
       H\otimes M\otimes N \rarrow{\Delta\otimes 1\otimes 1} H\otimes
       H\otimes 
       M\otimes N \rarrow{(2,3)} H\otimes M\otimes H\otimes N
       \rarrow{\mu_{M}\otimes \mu_{N}} M\otimes N.
       \]
       The categories $\lMod{H}$ and $\lmod{H}$ are regarded as monoidal
       categories under this tensor product.

 \item We regard $k$ as an $H$-module via the counit $\varepsilon:H\to k$
       so that $\varepsilon$ is a morphism in $\lMod{H}$.

 \item We use Sweedler's notation for coproducts,
       i.e.
       \[
	\Delta(h) = h_{(1)}\otimes h_{(2)}
       \]
       for $h\in H$.
       We also use an analogous notation for comodules.

 \item For a category $C$ and objects $x,y$ in $C$, the set of morphisms
       from $x$ to $y$ is denoted by $C(x,y)$.
       When $C$ is small, the sets of objects and morphisms are denoted
       by $C_{0}$ and $C_{1}$, respectively. The 
       source, the target, and the unit maps are denoted by 
       \begin{align*}
	s & : C_{1} \rarrow{} C_{0} \\
	t & : C_{1} \rarrow{} C_{0} \\
	\eta & : C_{0} \rarrow{} C_{1},
       \end{align*}
       respectively.
\end{itemize}

\subsection{Module categories over a Hopf algebra and their modules}
\label{module_category}

Khovanov and Qi studied $H$-module algebras and their modules.
We would like to be slightly more general, since we are interested in
Hopfological analogues of dg categories. We regard an $H$-module algebra
as a one-object $H$-module category. 

\begin{definition}
 A \emph{left $H$-module category} is a category enriched over
 the monoidal category $\lMod{H}$. When it has a single object, it is
 called a \emph{left $H$-module algebra}. 

 By forgetting the action of $H$, we obtain the \emph{underlying
 $k$-linear category or $k$-algebra}, which is denoted by
 $\underline{A}$.
\end{definition}

We are interested in the category of left $H$-equivariant $A$-modules
for a left $H$-module category $A$. In order to give a precise
definition, we first need to fix notation and terminology for $k$-linear
categories. 
The following fact is useful to simplify notations for linear categories
and their modules.

\begin{lemma}
 \label{coproduct_decomposition_by_comodule}
 For a $k$-module $M$ and a set $S$,
 there is a one-to-one correspondence between families of submodules
 $\{M_{s}\}_{s\in S}$ indexed by $S$ with $M=\bigoplus_{s\in S} M_{s}$
 and comodule structures on $M$ over the free $k$-module
 $kS$ spanned by $S$, whose
 coalgebra structure is induced by the diagonal map on $S$. 
\end{lemma}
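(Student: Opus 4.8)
# Proof Proposal for Lemma \ref{coproduct_decomposition_by_comodule}

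The plan is to construct mutually inverse assignments between direct-sum decompositions of $M$ indexed by $S$ and $kS$-comodule structures on $M$, and then check that the required axioms correspond under this bijection. Throughout, write $C = kS$ with coalgebra structure given by $\Delta(s) = s \otimes s$ and counit $\varepsilon(s) = 1$ for all $s \in S$ (the coalgebra induced by the diagonal on $S$).

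First I would go from a decomposition to a comodule. Given $M = \bigoplus_{s \in S} M_s$, define $\rho : M \to M \otimes C$ on the summand $M_s$ by $\rho|_{M_s}(x) = x \otimes s$, extended $k$-linearly. Coassociativity $(\rho \otimes 1)\circ \rho = (1 \otimes \Delta)\circ \rho$ holds because on $M_s$ both sides send $x$ to $x \otimes s \otimes s$, using $\Delta(s) = s \otimes s$. The counit axiom $(1 \otimes \varepsilon)\circ \rho = \mathrm{id}_M$ holds because on $M_s$ it sends $x \mapsto x \otimes s \mapsto \varepsilon(s)\, x = x$. So $\rho$ is a well-defined comodule structure.

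Conversely, given a comodule structure $\rho : M \to M \otimes C$, I would set $M_s := \{ x \in M : \rho(x) = x \otimes s \}$. One checks $M_s$ is a $k$-submodule, and that the $M_s$ are independent and span $M$: for any $x \in M$, write $\rho(x) = \sum_{s \in S} x_s \otimes s$ with $x_s \in M$ (finitely many nonzero, since $C$ has basis $S$); the counit axiom gives $x = \sum_s x_s$, while coassociativity applied to $x$ yields $\sum_s \rho(x_s) \otimes s = \sum_s x_s \otimes s \otimes s$, and comparing the $C$-component (using that $S$ is a basis) gives $\rho(x_s) = x_s \otimes s$, i.e. $x_s \in M_s$. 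This simultaneously proves $M = \sum_s M_s$ and, applied to an element of $M_t \cap \sum_{s \neq t} M_s$, forces the intersection to vanish, so the sum is direct. Finally I would observe the two constructions are visibly inverse to each other: starting from a decomposition, the recovered submodule $\{x : \rho(x) = x \otimes s\}$ is exactly $M_s$; starting from a comodule, the $\rho$ built from the $\{M_s\}$ agrees with the original $\rho$ by the computation just given.

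The only mildly delicate point — the ``main obstacle,'' such as it is — is the bookkeeping in the converse direction: extracting the eigenspace-type decomposition from $\rho$ and verifying directness and spanning simultaneously from the comodule axioms, where one must be careful that $S$ being a $k$-basis of $C$ is what licenses comparing coefficients. Everything else is a routine unwinding of definitions, so I would keep the write-up brief.
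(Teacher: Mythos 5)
Your proposal is correct and follows essentially the same argument as the paper: both directions are handled by the same explicit constructions, with the converse direction extracting $M_s$ as the set of elements with $\rho(x)=x\otimes s$ and using the counit and coassociativity axioms to show these submodules span $M$ and sum directly. The only differences are cosmetic (you use the right-comodule convention $M\to M\otimes kS$ where the paper uses $kS\otimes M$, and you spell out the directness and mutual-inverse checks in slightly more detail).
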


\begin{proof}
 Given a comodule structure $\delta:M\to kS\otimes M$,
 define 
 \[
  M_{s}=\set{m\in M}{\exists m' \text{ s.t. } \delta(m)=s\otimes m'}.
 \]
 Note that $M_{s}\cap M_{s'}=0$ if $s\neq s'$.
 Suppose $\delta(m)=\sum_{s} s\otimes m_{s}$. Then the coassociativity
 implies that $m_{s}$ belongs to $M_{s}$ and
 the counit condition implies that $\sum_{s\in S} m_{s}=m$.
 And we have $M=\bigoplus_{s} M_{s}$.

 Conversely a family of submodules with $M=\bigoplus_{s\in S} M_{s}$
 gives rise to a map $\delta: M\to kS\otimes M$ by
 $\delta(m)=s\otimes m$ for $m\in M_{s}$. This is a comodule structure
 on $M$.
\end{proof}

\begin{remark}
 This observation is due to Cohen and Montgomery
 \cite{Cohen-Montgomery1984}. The second author learned this fact from
 Hideto Asashiba.
\end{remark}

Let $A$ be a small $k$-linear category. Recall from section
\ref{introduction} that the set of objects and the modules of morphisms
in $A$ are denoted by $A_{0}$ and $A_{1}$, respectively.
By Lemma \ref{coproduct_decomposition_by_comodule}, we may regard the
total morphism space
\[
A_{1} = \bigoplus_{x,y\in A_{0}} A(x,y).
\]
as a $kA_{0}$-$kA_{0}$-bicomodule, where the right comodule structure is
given by the source map and the left comodule structure is given by the
target map. For simplicity, we use the following notations.

\begin{convention} 
 \hspace*{\fill}
\begin{itemize}
 \item The free $k$-module $kA_{0}$ is denoted by $A_{0}$ and is
       regarded as a $k$-coalgebra by the diagonal of $A_{0}$. 
 \item We identify $A$ with the module of all morphisms $A_{1}$ so that
       $A$ is an $A_{0}$-$A_{0}$-bicomodule.
\end{itemize}
\end{convention} 

Recall that given a right comodule $M$ and a left comodule $N$ over a
coalgebra $C$, the cotensor product $M\Box_{C}N$ is defined by the
equalizer
\[
 \equalizer{M\Box_{C}N}{M\otimes N}{\delta_{M}\otimes 1}{1\otimes
 \delta_{N}}{M\otimes C\otimes N},
\]
where $\delta_{M}$ and $\delta_{N}$ are comodule structure maps for $M$
and $N$, respectively.
With this notation, we may identify
\[
 A\Box_{A_{0}} A = \left(\bigoplus_{y,z\in A_{0}}
 A(y,z)\right)\Box_{A_{0}}\left(\bigoplus_{x,y\in A_{0}} A(x,y)\right) 
 \cong \bigoplus_{x,y,z\in A_{0}} A(y,z)\otimes A(x,y)
\]
so that the composition of morphisms and the unit are
given by bicomodule maps 
\begin{align*}
 \mu_{A} & : A\Box_{A_{0}} A \rarrow{} A \\
 \eta_{A} & : A_{0} \rarrow{} A. 
\end{align*}
In other words, we regard a $k$-linear category as a monoid objects in
the monoidal category of $A_{0}$-$A_{0}$-bicomodules whose monoidal
structure is given by $\Box_{A_{0}}$.

\begin{definition}
 \label{def:module_over_category}
 Let $A$ be a $k$-linear category. A \emph{left $A$-module} consists of
 \begin{itemize}
  \item a left $A_{0}$-comodule $M$, and
  \item a morphism of left $A_{0}$-comodules
	$\mu_{A,M}: A\Box_{A_{0}}M\to M$,
 \end{itemize}
 which satisfy the unit and the associativity conditions.
 For left $A$-modules $M$ and $N$, a morphism of left $A_{0}$-comodules
 $f:M\to N$ is called an \emph{$A$-module homomorphism} if it commutes with
 the actions of $A$.  

 The category of left $A$-modules and $A$-module homomorphisms is
 denoted by $\lMod{A}$.
\end{definition}

\begin{remark}
 Thanks to Lemma \ref{coproduct_decomposition_by_comodule}, a left
 $A$-module $M$ can be regarded as a collection $\{M(x)\}$ of
 $k$-modules indexed by objects of $A$ equipped with a family of
 $k$-linear maps
 \[
  A(x,y)\otimes M(x) \rarrow{} M(y)
 \]
 satisfying the associativity and the unit conditions.
 In other words, a left $A$-module is nothing but a functor
 $A\to\lMod{k}$. Similarly, a right $A$-module is a contravariant
 functor from $A$ to $\lMod{k}$.
\end{remark}

When $A$ is a left $H$-module category, we need to incorporate the
action of $H$ as follows.

\begin{definition}
 \label{def:equivariant_module_over_module_algebra}
 Let $A$ be a left $H$-module category. A \emph{left $H$-equivariant
 $A$-module} consists of
 \begin{itemize}
  \item a left $A$-module $\mu_{A,M}: A\Box_{A_{0}}M\to M$ and
  \item a left $H$-module structure
	$\mu_{H,M}: H\otimes M\to M$ on $M$
 \end{itemize}
 satisfying the following conditions:
 \begin{enumerate}
  \item $\mu_{H,M}$ is a morphism of $A_{0}$-comodules.
  \item $\mu_{A,M}$ and $\mu_{H,M}$ are compatible in the sense that the
	following diagram is commutative
	\[
	\xymatrix{
	 H\otimes A\Box_{A_{0}}M 
	  \ar[d]_{\Delta\otimes 1\Box 1}
	  \ar[rrr]^{1\otimes\mu_{A,M}} & & &
	  H\otimes M \ar[dd]^{\mu_{H,M}} \\ 
	  H\otimes H \otimes A\Box_{A_{0}}M \ar[d]_{(2,3)} \\
	  (H\otimes A)\Box_{A_{0}}(H\otimes M)
	  \ar[rr]_(.6){\mu_{H,A}\otimes \mu_{H,M}}
	  & & A\Box_{A_{0}}M \ar[r]_(.55){\mu_{A,M}} & M.
	}
	\]
 \end{enumerate}

 Given two $H$-equivariant $A$-modules $M$ and $N$, an
 \emph{$H$-equivariant morphism} from $M$ to $N$ is a morphism
 $f:M\to N$ of left $A_{0}$-comodules which commutes with both
 $A$-module structures and $H$-module structures.

 The category of $H$-equivariant left $A$-modules and
 $\underline{A}$-module homomorphisms is denoted by $\bm{C}_{A,H}$.
 The wide subcategory of $H$-equivariant morphisms in
 $\bm{C}_{A,H}$ is denoted by $\bm{C}_{A,H}^{H}$.
\end{definition}

\begin{remark}
 When $A=k$ with the $H$-action given by $\varepsilon:H\to k$, we have
 an identification $\bm{C}_{k,H}^{H}=\lMod{H}$.
\end{remark}

The following fact plays a fundamental role in Hopfological algebra.
See section 5.1 of Qi's paper \cite{1205.1814} for the case of
$H$-module algebra. It is straightforward to obtain a generalization to
the case of $H$-module category.

\begin{proposition}
 \label{module_category_of_modules_over_module_algebra}
 For a left $H$-module category $A$ and left $H$-equivariant $A$-modules
 $M,N$, 
 define a left $H$-action on $\bm{C}_{A,H}(M,N)$ by
 \[
 (hf)(m) = h_{(2)}f\left(S^{-1}(h_{(1)})m\right)
 \]
 for $h\in H$, $f\in \bm{C}_{A,H}(M,N)=(\lMod{A})(M,N)$, and $m\in M$,
 where $S$ is the antipode of $H$.
 Then $\bm{C}_{A,H}(M,N)$ becomes a left $H$-module with which the
 compositions of morphisms are $H$-module homomorphisms and the identity
 morphisms are $H$-invariant.  In other words, $\bm{C}_{A,H}$ becomes a
 left $H$-module category.
\end{proposition}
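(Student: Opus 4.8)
The plan is to verify the several assertions packaged in Proposition~\ref{module_category_of_modules_over_module_algebra} one at a time, in the following order: (1) the formula $(hf)(m)=h_{(2)}f(S^{-1}(h_{(1)})m)$ defines a left $H$-module structure on the $k$-module $\bm{C}_{A,H}(M,N)$; (2) with respect to these structures the composition maps $\bm{C}_{A,H}(N,P)\otimes\bm{C}_{A,H}(M,N)\to\bm{C}_{A,H}(M,P)$ are $H$-module homomorphisms, i.e.\ the composition is a morphism of left $H$-modules when the source carries the tensor-product $H$-action recalled in \S\ref{notation}; and (3) the identity morphism $\id_{M}\in\bm{C}_{A,H}(M,M)$ is $H$-invariant, i.e.\ $h\cdot\id_{M}=\varepsilon(h)\id_{M}$. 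Granting these, the final sentence --- that $\bm{C}_{A,H}$ becomes a left $H$-module category --- is just the observation that (1)--(3) are exactly the axioms for an enrichment of $\bm{C}_{A,H}$ over the monoidal category $\lMod{H}$, together with the routine check that the $H$-action on hom-objects is compatible with the $A_{0}$-comodule bookkeeping (so that the enrichment really lands in $\lMod{H}$ and not merely in $k$-modules).

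For step~(1), I would first check that $hf$ is again an $A$-module homomorphism: one computes $(hf)(am)$ for $a\in A(x,y)$, $m\in M(x)$, using that $\mu_{H,M}$ and $\mu_{H,N}$ are compatible with the $A$-action (condition~(2) of Definition~\ref{def:equivariant_module_over_module_algebra}) and the Hopf-algebra identities for $S^{-1}$ (the antipode axiom $S^{-1}(h_{(2)})h_{(1)}=\varepsilon(h)=h_{(2)}S^{-1}(h_{(1)})$ and coassociativity). Then associativity and unitality of the action $H\otimes\bm{C}_{A,H}(M,N)\to\bm{C}_{A,H}(M,N)$ follow by a direct Sweedler-notation manipulation: $(h'(hf))(m)=h'_{(2)}(hf)(S^{-1}(h'_{(1)})m)=h'_{(2)}h_{(2)}f(S^{-1}(h_{(1)})S^{-1}(h'_{(1)})m)$, and then one uses that $S^{-1}$ is an anti-homomorphism of coalgebras together with coassociativity to rewrite this as $((h'h)f)(m)$; the unit $1_{H}$ acts trivially because $\Delta(1)=1\otimes 1$ and $S^{-1}(1)=1$. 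This is the core computation and I would present it carefully but compactly.

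Step~(2) is the one I expect to be the main obstacle, since it requires matching the ``twisted'' action on hom-objects against the diagonal $H$-action on a tensor product, and getting the interplay of two separate Sweedler indices and the antipode right. Concretely, for $g\in\bm{C}_{A,H}(N,P)$ and $f\in\bm{C}_{A,H}(M,N)$ one must show $(h\cdot(g\circ f))(m)=((h_{(1)}g)\circ(h_{(2)}f))(m)$ for all $m$. Expanding the right side gives $h_{(1)(2)}\,g\bigl(S^{-1}(h_{(1)(1)})\,h_{(2)(2)}\,f(S^{-1}(h_{(2)(1)})m)\bigr)$, and after applying coassociativity to split $\Delta^{(2)}(h)$ and the antipode relation $S^{-1}(h_{(1)(1)})h_{(2)(2)}$-type cancellation, this collapses to $h_{(2)}\,g\bigl(f(S^{-1}(h_{(1)})m)\bigr)=(h\cdot(g\circ f))(m)$. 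I would spell out exactly which comultiplications are applied and invoke the standard fact (true for any Hopf algebra, finite-dimensional or not) that $S^{-1}$ exists here because $H$ is finite-dimensional; one should note that $S^{-1}$ being used rather than $S$ is precisely what makes the left-module (as opposed to right-module) convention work. Step~(3) is immediate: $(h\cdot\id_{M})(m)=h_{(2)}S^{-1}(h_{(1)})m=\varepsilon(h)m$ by the antipode axiom, so $h\cdot\id_{M}=\varepsilon(h)\id_{M}$, which says $\id_{M}$ is $H$-invariant as an element of the $H$-module $\bm{C}_{A,H}(M,M)$. Finally I would remark that compatibility of this $H$-action with the $A_{0}$-comodule structure on $\bm{C}_{A,H}(M,N)$ is inherited from the fact that $\mu_{H,M}$ and $\mu_{H,N}$ are $A_{0}$-comodule maps, so the enrichment is over $\lMod{H}$ as claimed, and cite \S5.1 of~\cite{1205.1814} for the single-object case from which the general case differs only notationally.
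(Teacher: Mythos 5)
The paper offers no written proof here (it defers to \S 5.1 of Qi's paper and calls the generalization routine), so your write-up stands or falls on the details you supply --- and the detail you yourself single out as the core of step~(2) does not work. In four-fold Sweedler notation your expansion of $\bigl((h_{(1)}g)\circ(h_{(2)}f)\bigr)(m)$ is
\[
h_{(2)}\,g\bigl(S^{-1}(h_{(1)})\,h_{(4)}\,f(S^{-1}(h_{(3)})m)\bigr),
\]
and the factors you propose to cancel, $S^{-1}(h_{(1)})$ and $h_{(4)}$, are the two \emph{outermost} legs of $\Delta^{(3)}(h)$. The antipode identities $S^{-1}(h_{(2)})h_{(1)}=\varepsilon(h)1=h_{(2)}S^{-1}(h_{(1)})$ only cancel \emph{adjacent} legs, so no such collapse occurs; indeed the identity $h\cdot(g\circ f)=(h_{(1)}g)\circ(h_{(2)}f)$ is false for non-cocommutative $H$ (try $H=k^{G}$ with $G$ nonabelian; it only checks out in examples like $\Lambda(d)$ because those are cocommutative, and the Taft algebras motivating Hopfological algebra are not). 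Since $\lMod{H}$ is not symmetric monoidal, the order of the tensor factors genuinely matters, and the correct statement is
\[
h\cdot(g\circ f)=(h_{(2)}g)\circ(h_{(1)}f),
\]
i.e.\ the composition is $H$-linear as a map $\bm{C}_{A,H}(M,N)\otimes\bm{C}_{A,H}(N,P)\to\bm{C}_{A,H}(M,P)$, $f\otimes g\mapsto g\circ f$. With that ordering the expansion reads $h_{(4)}\,g\bigl(S^{-1}(h_{(3)})\,h_{(2)}\,f(S^{-1}(h_{(1)})m)\bigr)$, the adjacent middle legs give $S^{-1}(h_{(3)})h_{(2)}=\varepsilon(h_{(2)})1$ after regrouping, and everything collapses as desired. (This also means the final sentence of the proposition should be read with the enrichment's composition written in this order, or over $\lMod{H}$ with the reversed tensor product, rather than in the $A(y,z)\otimes A(x,y)$ order used for $A$ itself in \S 2.2.)

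Your steps (1) and (3) are essentially correct, with one small mislabeling: associativity of the action uses that $S^{-1}$ is an \emph{algebra} anti-homomorphism together with multiplicativity of $\Delta$, whereas the fact that $hf$ is again an $A$-module map is where the \emph{coalgebra} anti-homomorphism property $\Delta S^{-1}=(S^{-1}\otimes S^{-1})\circ(1,2)\circ\Delta$ enters (you have these two roles interchanged). Step~(2), however, must be redone with the corrected tensor ordering; as written it proves a false identity.
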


Recall that for an $H$-module $V$, the \emph{submodule of
invariants} is defined by
\[
 V^{H} = \set{x\in V}{hx=\varepsilon(h)x \text{ for all }
 h\in H}. 
\]
Our notation $\bm{C}_{A,H}^{H}$ is designed to fit into the
following identification.

\begin{corollary}
 \label{invariant_and_equivariant}
 Let $f$ be a morphism in $\bm{C}_{A,H}$.
 Then $hf=\varepsilon(h)f$ for all
 $h\in H$ if and only if $f$ is an $H$-module homomorphism.
 In other words, under the $H$-module structure on $\bm{C}_{A,H}(M,N)$
 in Proposition \ref{module_category_of_modules_over_module_algebra}, we
 have
 \[
 \left(\bm{C}_{A,H}(M,N)\right)^{H} =
 \bm{C}_{A,H}^{H}(M,N).  
 \]
\end{corollary}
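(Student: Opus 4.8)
The plan is to unwind the definition of the $H$-action on $\bm{C}_{A,H}(M,N)$ given in Proposition \ref{module_category_of_modules_over_module_algebra} and compare it directly with the defining condition for an $H$-module homomorphism. First I would recall that $f \in \bm{C}_{A,H}(M,N)$ is, by definition, a morphism of left $A_{0}$-comodules commuting with the $A$-action; being additionally $H$-equivariant means $f(hm) = h f(m)$ for all $h \in H$, $m \in M$. So the task reduces to the purely Hopf-algebraic identity: $hf = \varepsilon(h)f$ for all $h$ $\iff$ $f$ commutes with the $H$-actions on $M$ and $N$.

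For the direction ($\Leftarrow$), assume $f(hm) = hf(m)$. Then for any $h \in H$ and $m \in M$,
\[
 (hf)(m) = h_{(2)} f\!\left(S^{-1}(h_{(1)}) m\right) = h_{(2)} S^{-1}(h_{(1)}) m.
\]
Now I would use the antipode axiom in the form appropriate to $S^{-1}$: applying $S^{-1}$ to the identity $S(h_{(1)})h_{(2)} = \varepsilon(h)1$ (or directly, using that $S^{-1}$ is the antipode of $H^{\mathrm{op}}$ or $H$ with opposite coproduct) gives $h_{(2)} S^{-1}(h_{(1)}) = \varepsilon(h)1$. Hence $(hf)(m) = \varepsilon(h) m \cdot 1$ acting as $\varepsilon(h) f(m)$... more precisely $(hf)(m) = \varepsilon(h)f(m)$, which is exactly $hf = \varepsilon(h)f$.

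For ($\Rightarrow$), assume $hf = \varepsilon(h)f$ for all $h$, i.e. $h_{(2)} f(S^{-1}(h_{(1)})m) = \varepsilon(h)f(m)$ for all $h, m$. I want to recover $f(hm) = hf(m)$. The trick is to substitute cleverly: replace $m$ by $S(h_{(2)})m'$ and $h$ by $h_{(1)}$, then sum. Concretely, starting from $f(hm)$, write $hm = h_{(1)} \varepsilon(h_{(2)}) m = h_{(1)} S^{-1}(\text{something})\cdots$; the cleanest route is to compute
\[
 h f(m) = h_{(1)} \varepsilon(h_{(2)}) f(m) = h_{(1)} f\!\left(h_{(2)} S^{-1}(h_{(3)}) m\right)
\]
using $h_{(2)} S^{-1}(h_{(3)}) = \varepsilon(h_{(2)})1$ on the inside, and then recognize the right-hand side, after reindexing the coproduct, as $(h_{(1)} f)$ evaluated appropriately... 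Actually the direct approach: apply the hypothesis with $h$ replaced by $h_{(1)}$ and $m$ replaced by $h_{(2)}m$ — wait, let me instead note $(h f)(S(h'_{(1)})m) $ type manipulations. The robust formulation is: from $h_{(2)}f(S^{-1}(h_{(1)})m) = \varepsilon(h)f(m)$, substitute $m \mapsto h'm$ and then choose the comultiplication indices so that $S^{-1}$ cancels $h'$; summing over the coproduct of a single element $g$ split as $g_{(1)}\otimes g_{(2)}\otimes g_{(3)}$ and using antipode axioms twice yields $f(gm) = gf(m)$. I would write this out as a short Sweedler-notation computation.

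The main obstacle is purely bookkeeping: getting the placement of $S$ versus $S^{-1}$ and the order of the Sweedler legs exactly right, since the action in Proposition \ref{module_category_of_modules_over_module_algebra} uses $S^{-1}$ and one must invoke the antipode identities for $S^{-1}$ (equivalently, the antipode identities for $H$ read in the opposite coalgebra). There is no conceptual difficulty — the identity is essentially the standard adjunction between the internal hom $H$-action and equivariance of maps — but care is needed because $H$ is not assumed commutative or cocommutative, so one cannot be cavalier about which antipode axiom ($S(h_{(1)})h_{(2)} = \varepsilon(h) = h_{(1)}S(h_{(2)})$, and their $S^{-1}$-analogues) is being applied at each step. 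Once the correct normalization is pinned down, both implications are one-line Sweedler computations.
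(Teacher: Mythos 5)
Your argument is correct and is exactly the (omitted) computation the paper intends: both directions follow from the formula $(hf)(m)=h_{(2)}f\bigl(S^{-1}(h_{(1)})m\bigr)$ together with the antipode identities for $S^{-1}$, and the $(\Rightarrow)$ computation you only gesture at does close up, e.g.\ as $f(gm)=\varepsilon(g_{(2)})f(g_{(1)}m)=g_{(3)}f\bigl(S^{-1}(g_{(2)})g_{(1)}m\bigr)=gf(m)$. One small bookkeeping slip: the identity $h_{(2)}S^{-1}(h_{(1)})=\varepsilon(h)1$ is obtained by applying the anti-homomorphism $S^{-1}$ to $h_{(1)}S(h_{(2)})=\varepsilon(h)1$, not to $S(h_{(1)})h_{(2)}=\varepsilon(h)1$ (the latter yields $S^{-1}(h_{(2)})h_{(1)}=\varepsilon(h)1$), but the identity you actually use is the correct one, so the proof stands.
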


A left $H$-module algebra $A$ gives rise to a new algebra $A\# H$ by the
smash product construction. The construction can be extended to
$H$-module categories.
 
\begin{definition}
 Let $A$ be a left $H$-module category. Define a $k$-linear
 category $A\# H$ as follows.
 The objects are
 \[
  (A\# H)_{0} = A_{0}.
 \]
 For $x,y\in (A\# H)_{0}$, the module of morphisms from $x$ to $y$ is
 \[
  (A\# H)(x,y) = A(x,y)\otimes H
 \]
 so that the module of morphisms is 
 \[
  A\# H = \bigoplus_{x,y\in A_{0}} A(x,y)\otimes H= A\otimes H. 
 \]
 The unit is given by
 \[
 A_{0} \cong A_{0}\otimes k \rarrow{\eta_{A}\otimes \eta_{H}} A\otimes
 H,
 \]
 where $\eta_{H}:k\to H$ is the unit of $H$.
 The composition of morphisms is given by 
 \begin{eqnarray*}
  (A\otimes H)\Box_{A_{0}} (A\otimes H)
  & \rarrow{1\otimes\Delta\otimes 1\otimes 1} &
  (A\otimes(H\otimes H)) \Box_{A_{0}} (A\otimes H) \\
  & \rarrow{(3,4)} & A\Box_{A_{0}} (H\otimes A) \otimes
  (H\otimes H) \\
  & \rarrow{1\otimes\mu_{H,A}\otimes\mu_{H}} & A\Box_{A_{0}} A\otimes
  H \\
  & \rarrow{\mu_{A}\otimes 1} & A\otimes H.
 \end{eqnarray*}
\end{definition}

The following description of $\bm{C}_{A,H}^{H}$ is well known when $A$
is an $H$-module algebra. 
The proof is essentially the same as the case of $H$-module algebras and
is omitted.

\begin{proposition}
 \label{equivariant_module_as_module_over_smash_product}
 For an $H$-module category $A$, the category $\bm{C}_{A,H}^{H}$ is
 equivalent to $\lMod{(A\#H)}$. 
\end{proposition}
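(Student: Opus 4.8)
The plan is to promote the well-known equivalence for $H$-module algebras to the enriched setting by writing down explicit functors in both directions that are the identity on underlying $A_{0}$-comodules and on morphisms, so that the asserted equivalence is in fact an isomorphism of categories. The starting point is the identification of $k$-modules
\[
 (A\#H)\Box_{A_{0}} M = (A\otimes H)\Box_{A_{0}} M \;\cong\; A\Box_{A_{0}}(H\otimes M),
\]
valid for every left $A_{0}$-comodule $M$, where $H\otimes M$ carries the $A_{0}$-comodule structure $1\otimes\delta_{M}$. This holds because the $H$-factor of $A\#H=A\otimes H$ does not enter the $A_{0}$-bicomodule structure and because $-\otimes H$ commutes with the equalizer defining $\Box_{A_{0}}$; concretely it is the reshuffle $(2,3)$ of tensor factors.

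Using this, I would define $\Phi\colon\bm{C}_{A,H}^{H}\to\lMod{(A\#H)}$ on an object $(M,\mu_{A,M},\mu_{H,M})$ by taking the $(A\#H)$-action to be the composite
\[
 (A\#H)\Box_{A_{0}} M \;\cong\; A\Box_{A_{0}}(H\otimes M)
 \rarrow{1\Box\mu_{H,M}} A\Box_{A_{0}} M
 \rarrow{\mu_{A,M}} M,
\]
which is a morphism of $A_{0}$-comodules precisely because $\mu_{H,M}$ is one (condition (1) of Definition \ref{def:equivariant_module_over_module_algebra}), and on morphisms by the identity, which makes sense since an $H$-equivariant morphism commutes with both $\mu_{A,-}$ and $\mu_{H,-}$. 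The unit axiom for the resulting $(A\#H)$-action follows from the unit axioms for $\mu_{A,M}$ and $\mu_{H,M}$; the content is the associativity axiom, which, upon unwinding against the explicit composition law of $A\#H$, splits into associativity of $\mu_{A,M}$, associativity of $\mu_{H,M}$, and the compatibility square (condition (2) of Definition \ref{def:equivariant_module_over_module_algebra}).

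For the inverse functor $\Psi\colon\lMod{(A\#H)}\to\bm{C}_{A,H}^{H}$ I would restrict along the two structure maps of $A\#H$. The assignment $a\mapsto a\otimes 1$ is a functor $A\to A\#H$ (immediate from the composition formula together with $\Delta(1)=1\otimes1$), endowing any $(A\#H)$-module $N$ with an underlying $A$-module structure $\mu_{A,N}$, while the morphisms $1_{x}\otimes h$ act on the $A_{0}$-graded pieces of $N$ and assemble into a map of $A_{0}$-comodules $\mu_{H,N}\colon H\otimes N\to N$. Reading off from the definition of composition in $A\#H$ the relations $(a\otimes1)(1_{x}\otimes h)=a\otimes h$ and $(1_{x}\otimes h)(b\otimes1)=(h_{(1)}\cdot b)\otimes h_{(2)}$ shows, respectively, that $\Psi(N)$ does satisfy the compatibility square and hence lies in $\bm{C}_{A,H}^{H}$, and that $\Phi$ and $\Psi$ are mutually inverse ($\Psi$ being again the identity on morphisms). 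The second relation is exactly the Hopf-theoretic commutation rule $(1\#h)(b\#1)=(h_{(1)}\cdot b)\#h_{(2)}$, and its image in modules is the statement $h\cdot(b\cdot m)=(h_{(1)}\cdot b)\cdot(h_{(2)}\cdot m)$, i.e.\ condition (2).

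The main obstacle is bookkeeping rather than conceptual: one must carry the $A_{0}$-bicomodule structures correctly through the reshuffling isomorphism and check that the single associativity constraint for the $(A\#H)$-action decomposes precisely into the three conditions of Definition \ref{def:equivariant_module_over_module_algebra}. This reduces to verifying that the commutation relation $(1_{x}\otimes h)(b\otimes1)=(h_{(1)}\cdot b)\otimes h_{(2)}$ reproduces the compatibility square on the nose, after which $\Phi$ and $\Psi$ are visibly inverse to one another and in particular $\bm{C}_{A,H}^{H}\cong\lMod{(A\#H)}$.
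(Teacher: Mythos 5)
Your argument is correct and is exactly the standard smash-product correspondence that the paper invokes without proof (it refers to the $H$-module algebra case in \S5.1 of Qi's paper and omits the details): the commutation relation $(1_{x}\otimes h)(b\otimes 1)=(h_{(1)}\cdot b)\otimes h_{(2)}$ read off from the composition law of $A\# H$ is precisely what converts the compatibility square of Definition \ref{def:equivariant_module_over_module_algebra} into associativity of the $(A\# H)$-action, and your reshuffling isomorphism $(A\otimes H)\Box_{A_{0}}M\cong A\Box_{A_{0}}(H\otimes M)$ is unproblematic since $k$ is a field. Your functors are indeed mutually inverse identities on underlying objects and morphisms, so the equivalence is an isomorphism of categories, matching what the paper intends.
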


Another important fact in Hopfological algebra is that
$\bm{C}_{A,H}^{H}$ is a module category\footnote{Should not be
confused with an $H$-module category, which means a category enriched over
$\lMod{H}$.} over the monoidal category 
$(\lMod{H},\otimes,k)$. 
Following Qi's paper \cite{1205.1814}, we use a right action.
Recall that a right action of a monoidal category $(\bm{V},\otimes,1)$
on a category $\bm{C}$ consists of a functor
\[
 \otimes : \bm{C}\times\bm{V} \rarrow{} \bm{C},
\]
a natural isomorphism
\[
a: (X\otimes V)\otimes W \rarrow{\cong} X\otimes (V\otimes W)
\]
for $X \in\bm{C}_{0}$ and $V,W\in\bm{V}_{0}$ satisfying the pentagon
axiom, and a natural isomorphism 
\[
r : X\otimes 1 \rarrow{\cong} X
\]
for $X\in\bm{C}_{0}$ satisfying the unit axiom.
A precise definition can be found, for example, in Ostrik's paper
\cite{math/0111139}, who refers to Bernstein's lecture note
\cite{q-alg/9501032} and a paper \cite{Crane-Frenkel1994} by Crane and
Frenkel for the first appearance in the literature. 

In the case of $\bm{C}_{A,H}^{H}$, the right action of $\lMod{H}$ is
given by $M\otimes V$ for $M$ in $\bm{C}_{A,H}^{H}$ and $V$ in
$\lMod{H}$. The left $A$-module structure is given by that of $M$ and
the left $H$-module structure is given by the composition
\[
 H\otimes M\otimes V \rarrow{\Delta\otimes 1\otimes 1}  
 H\otimes H\otimes M\otimes V \rarrow{(2,3)} H\otimes M\otimes H\otimes
 V \rarrow{} M\otimes V,
\]
where the last map is given by the $H$-module structures of $M$ and $V$.

This action of $\lMod{H}$ allowed Khovanov to introduce functors
\[
 C,\Sigma:\bm{C}_{A,H}^{H} \rarrow{} \bm{C}_{A,H}^{H}
\]
by
\begin{align*}
 C(M) & = M\otimes H \\
 \Sigma(M) & = M\otimes (H/(\lambda)),
\end{align*}
respectively.
These are called the \emph{cone} and the \emph{suspension} functors,
respectively. 

The cone functor allows us to define an analogue of chain homotopy.

\begin{definition}
 Two morphisms $f,g:M\to N$ in $\bm{C}_{A,H}^{H}$ are called
 \emph{homotopic} if there exists a morphism $\varphi: C(M) \to N$
 making the diagram 
 \[
 \xymatrix{
   M \ar[d]_{\cong} \ar[rr]^{f-g} &
   & N \\
   M\otimes k \ar[r]_(.45){1\otimes\lambda} & M\otimes H
   \ar@{=}[r] & C(M) \ar[u]_{\varphi} 
 }
 \]
 commutative, in which case, we denote $f\simeq g$. The morphism
 $\varphi$ is called a \emph{homotopy} from $f$ to $g$.

 The \emph{homotopy category} or the \emph{stable category} of
 $\bm{C}_{A,H}^{H}$, denoted by $\cT_{A,H}^{H}$, is the category having
 the same objects as $\bm{C}_{A,H}^{H}$ whose set of morphisms from 
 $M$ to $N$ is given by
 \[
  \cT_{A,H}^{H}(M,N) = \bm{C}_{A,H}^{H}(M,N)/_{\simeq}. 
 \]
 When $A=k$, under the identification $\bm{C}_{k,H}^{H}=\lMod{H}$, we
 denote
 \[
  \ho(\lMod{H}) = \cT_{k,H}^{H}.
 \]
\end{definition}

Khovanov noticed that the homotopy category $\cT_{A,H}^{H}$ has a
structure of triangulated category with $\Sigma$ a shift functor.
An inverse to $\Sigma$ on $\cT_{A,H}^{H}$ is given by
\[
 \Sigma^{-1}(M) = M\otimes (\Ker\varepsilon),
\]
which is called the \emph{desuspension} functor. 
 
\begin{lemma}
 \label{Sigma}
 The functors $\Sigma$ and $\Sigma^{-1}$ induce functors on
 $\cT_{A,H}^{H}$ that are inverse to each other.
\end{lemma}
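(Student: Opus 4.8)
The plan is to work entirely at the level of the right action of $\lMod{H}$ on $\bm{C}_{A,H}^{H}$ and to reduce the statement to an isomorphism of $H$-modules in $\ho(\lMod{H})$, which can then be transported across the action. First I would recall the associativity isomorphism $a\colon (M\otimes V)\otimes W\xrightarrow{\cong} M\otimes(V\otimes W)$ for the module-category structure; applying it with $V=H/(\lambda)$ and $W=\Ker\varepsilon$ gives natural isomorphisms
\[
 \Sigma^{-1}\Sigma(M)\;=\;(M\otimes (H/(\lambda)))\otimes(\Ker\varepsilon)\;\cong\;M\otimes\bigl((H/(\lambda))\otimes(\Ker\varepsilon)\bigr),
\]
and symmetrically $\Sigma\Sigma^{-1}(M)\cong M\otimes\bigl((\Ker\varepsilon)\otimes(H/(\lambda))\bigr)$ in $\bm{C}_{A,H}^{H}$, naturally in $M$. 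So it suffices to produce natural isomorphisms $(H/(\lambda))\otimes(\Ker\varepsilon)\simeq k$ and $(\Ker\varepsilon)\otimes(H/(\lambda))\simeq k$ in the homotopy category $\ho(\lMod{H})$, together with the observation that the functor $M\otimes(-)\colon\ho(\lMod{H})\to\cT_{A,H}^{H}$ is well defined (homotopic maps go to homotopic maps, since the homotopy $\varphi\colon C(V)\to W$ can be tensored with $M$ and precomposed with the associativity constraint) and carries isomorphisms to isomorphisms.

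Next I would establish the two $H$-module homotopy equivalences. The key input is the short exact sequence of $H$-modules $0\to\Ker\varepsilon\to H\xrightarrow{\varepsilon}k\to 0$, together with the fact that $H$ itself is contractible in $\ho(\lMod{H})$ — indeed $C(k)=k\otimes H\cong H$ is the cone on $k$, and cones are null-homotopic, so in the triangulated category $\ho(\lMod{H})$ the object $H$ is zero and the triangle $\Ker\varepsilon\to H\to k\to\Sigma(\Ker\varepsilon)$ exhibits $k\simeq\Sigma(\Ker\varepsilon)$, equivalently $\Sigma^{-1}(k)\simeq\Ker\varepsilon$. Dually, since $H/(\lambda)$ fits in $0\to(\lambda)\to H\to H/(\lambda)\to 0$ with $(\lambda)\cong k$ as an $H$-module (the integral spans a copy of the trivial module, using non-semisimplicity only to guarantee $(\lambda)\neq H$ so that these are genuinely nontrivial), the same contractibility of $H$ gives $H/(\lambda)\simeq\Sigma(k)$ in $\ho(\lMod{H})$. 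Therefore $(H/(\lambda))\otimes(\Ker\varepsilon)\simeq\Sigma(k)\otimes\Sigma^{-1}(k)\simeq\Sigma\Sigma^{-1}(k)\simeq k$ and likewise in the other order, where at the last step I use that $\Sigma,\Sigma^{-1}$ are already known to be mutually inverse \emph{autoequivalences} of $\ho(\lMod{H})$ (Khovanov's theorem, recalled just above the lemma) applied to the base case $A=k$.

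The one point requiring care — and the step I expect to be the main obstacle — is the coherence/naturality bookkeeping: one must check that the composite natural isomorphism $\Sigma^{-1}\Sigma(M)\cong M$ built from the associativity constraint $a$ and the right unit constraint $r$ together with the chosen equivalence $(H/(\lambda))\otimes(\Ker\varepsilon)\simeq k$ is the \emph{identity} up to the coherence isomorphisms of the module category, i.e. that it is genuinely a unit/counit and not merely \emph{some} isomorphism. This is where the pentagon and unit axioms for the action of $(\lMod{H},\otimes,k)$ on $\bm{C}_{A,H}^{H}$ enter; concretely one verifies that the triangle identities hold in $\cT_{A,H}^{H}$ by reducing, via $a$, to the corresponding identities for $\Sigma$ and $\Sigma^{-1}$ on $\ho(\lMod{H})$, which are part of Khovanov's result. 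Everything else — exactness of $0\to(\lambda)\to H\to H/(\lambda)\to 0$, the identification $(\lambda)\cong k$, and the null-homotopy of $\mathrm{id}_{H}$ via the integral — is routine and I would only sketch it.
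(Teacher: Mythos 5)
Your reduction via the associativity constraint of the $\lMod{H}$-action — replacing $\Sigma^{-1}\Sigma(M)$ by $M\otimes\bigl((H/(\lambda))\otimes(\Ker\varepsilon)\bigr)$ and pushing the problem down to a statement about $H$-modules — is sound and is in fact what the paper does implicitly. The problem is entirely in how you establish the $H$-module statement: your argument is circular. At the point where you compute $(H/(\lambda))\otimes(\Ker\varepsilon)\simeq\Sigma(k)\otimes\Sigma^{-1}(k)\simeq\Sigma\Sigma^{-1}(k)\simeq k$, the last step invokes ``$\Sigma,\Sigma^{-1}$ are already known to be mutually inverse autoequivalences of $\ho(\lMod{H})$ (Khovanov's theorem) applied to the base case $A=k$.'' But $\ho(\lMod{H})=\cT_{k,H}^{H}$ is exactly the $A=k$ instance of the lemma you are proving; nothing earlier in the paper establishes it, and the theorem asserting that $\cT_{A,H}^{H}$ is triangulated with shift $\Sigma$ appears \emph{after} this lemma precisely because invertibility of $\Sigma$ is a prerequisite for it. The same objection applies to your use of distinguished triangles $\Ker\varepsilon\to H\to k\to\Sigma(\Ker\varepsilon)$ in $\ho(\lMod{H})$: the triangulated structure (Lemma \ref{Hopf_mapping_cone}, Khovanov's theorem) is only available downstream of the present lemma.

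There is a second, independent gap: the identification $\Sigma(k)\otimes\Sigma^{-1}(k)\simeq\Sigma\Sigma^{-1}(k)$ silently permutes tensor factors, since $\Sigma\Sigma^{-1}(k)\cong(\Ker\varepsilon)\otimes(H/(\lambda))$ whereas $\Sigma(k)\otimes\Sigma^{-1}(k)\cong(H/(\lambda))\otimes(\Ker\varepsilon)$. For a general finite dimensional Hopf algebra $\lMod{H}$ is monoidal but not symmetric or braided, so these two orderings cannot be interchanged without argument; the paper's Lemma \ref{switching_lambda} provides such a swap only for tensoring with $H$ itself. The paper's actual proof sidesteps both issues by quoting Khovanov's concrete $H$-module isomorphism $\Ker(\varepsilon)\otimes(H/(\lambda))\cong k\oplus Q$ with $Q$ projective — proved by direct Hopf-algebraic means, not by triangulated-category formalism — tensoring it with $M$, and killing $M\otimes Q$ in $\cT_{A,H}^{H}$ via Lemma \ref{tensoring_with_H}. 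If you replace your homotopy-theoretic derivation of $(H/(\lambda))\otimes(\Ker\varepsilon)\simeq k$ by that explicit splitting (in both tensor orders), the rest of your plan goes through; as written, the key step assumes what is to be proved.
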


\begin{proof}
 Khovanov showed that there exists a projective $H$-module $Q$ such that 
 \[
  \Ker(\varepsilon)\otimes (H/(\lambda)) \cong k\oplus Q.
 \]
 Thus 
 \[
  \Sigma(\Sigma^{-1}(M)) \cong M\oplus M\otimes Q
 \]
 for any $M$ in $\bm{C}_{A,H}^{H}$.
 By Lemma \ref{tensoring_with_H} below, we have
 $M\otimes Q\cong 0$ in $\cT_{A,H}^{H}$.

 Similarly we see that $\Sigma^{-1}(\Sigma(M))\cong M$ in
 $\cT_{A,H}^{H}$. 
\end{proof}

The following fact used in the above proof is stated as Proposition 2 in
Khovanov's paper \cite{math/0509083}. Khovanov refers to Montgomery's
book \cite{Montgomery1993} for a proof.

\begin{lemma}
 \label{tensoring_with_H}
 For any $H$-module $M$, $H\otimes M$ is a free $H$-module.
 If $M$ is of finite dimensional over $k$. Then
 $H\otimes M$ is a free $H$-module of rank $\dim_{k}M$. 
\end{lemma}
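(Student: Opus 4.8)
The plan is to reduce the statement to the structure theory of modules over a finite-dimensional Hopf algebra, specifically to the Nichols--Zoeller freeness theorem and the fact that $H$ is a Frobenius algebra, both of which are the standard references to Montgomery's book \cite{Montgomery1993} that the paper already invokes. First I would recall the "untwisting'' isomorphism: for any $H$-module $M$, the $H$-module $H \otimes M$, where $H$ acts diagonally via the coproduct $\Delta$ as specified in the list of conventions in \S\ref{notation}, is isomorphic as an $H$-module to $H \otimes M_{\mathrm{triv}}$, where $M_{\mathrm{triv}}$ is the underlying $k$-vector space of $M$ with the trivial $H$-action through the counit $\varepsilon$. The isomorphism is the classical one given on elements by $h \otimes m \mapsto h_{(1)} \otimes h_{(2)} m$ with inverse $h \otimes m \mapsto h_{(1)} \otimes S(h_{(2)}) m$; checking that these are mutually inverse $H$-linear maps is a short Sweedler-notation computation using the antipode and counit axioms, which I would not grind through here.

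Granting that reduction, it remains to show that $H \otimes V$ is a free $H$-module for a vector space $V$ with trivial $H$-action, and free of rank $\dim_k V$ when $V$ is finite-dimensional. But with trivial action on $V$ this is simply $H \otimes V \cong H^{\oplus \dim_k V}$ as $H$-modules: choose a $k$-basis $\{v_i\}$ of $V$ (for the finite-dimensional case a finite basis; in general an arbitrary basis), and the decomposition $V = \bigoplus_i k v_i$ induces $H \otimes V = \bigoplus_i H \otimes k v_i \cong \bigoplus_i H$, with each summand free of rank one. This gives freeness of rank $\dim_k V = \dim_k M$ directly, with no appeal to deeper structure theory needed at all for this particular statement.

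The only genuine point requiring care — and the step I expect to be the main (mild) obstacle — is verifying that the untwisting map is $H$-linear with respect to the correct module structures, i.e.\ that the diagonal $H$-action on the source $H \otimes M$ matches the action $h \cdot (h' \otimes m) = h h' \otimes m$ on the target after applying the twist. This is where the coassociativity of $\Delta$ and the antipode axiom $h_{(1)} S(h_{(2)}) = \varepsilon(h) 1$ enter. Once this is in hand the lemma follows immediately. I would present the untwisting isomorphism explicitly, state that $H$-linearity is a direct Sweedler computation, and then conclude with the basis decomposition argument above; alternatively, if one prefers to match Khovanov's phrasing, one can simply cite Proposition~2 of \cite{math/0509083} and Montgomery's book \cite{Montgomery1993}, but giving the short self-contained argument via the twist seems cleaner.
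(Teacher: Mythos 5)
Your proposal is correct. The paper itself gives no proof of this lemma --- it simply records that the statement is Proposition~2 of Khovanov's paper, who in turn cites Montgomery's book --- and the untwisting isomorphism $h\otimes m\mapsto h_{(1)}\otimes h_{(2)}m$ (with inverse via the antipode) followed by the choice of a $k$-basis of $M$ is exactly the standard argument behind that citation, and it is compatible with the paper's convention $h\cdot(h'\otimes m)=h_{(1)}h'\otimes h_{(2)}m$ for the tensor action. The opening references to Nichols--Zoeller and the Frobenius property of $H$ are red herrings that play no role, as you yourself note at the end of the second paragraph.
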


\begin{corollary}
 Let $P$ be a projective $H$-module. Then, for any $H$-module $M$,
 both $P\otimes M$ and $M\otimes P$ are projective $H$-modules.
\end{corollary}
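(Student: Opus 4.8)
The plan is to reduce everything to the free case handled by Lemma \ref{tensoring_with_H}, using that projective modules are retracts of free ones and that the tensor product $\otimes = \otimes_k$ is additive and exact in each variable. First I would record the observation that for any $H$-module $N$, the functor $N\otimes(-)\colon\lMod{H}\to\lMod{H}$ (with the diagonal $H$-action described in \S\ref{notation}) is additive; hence it sends direct summands to direct summands, and in particular sends a retract of a free module to a retract of the image of that free module. So it suffices to prove the claim when $P$ is \emph{free}, i.e.\ $P\cong\bigoplus_{i\in I}H$ for some index set $I$.

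Next I would handle the free case. Since $\otimes_k$ commutes with arbitrary direct sums in each variable, for $P=\bigoplus_{i\in I}H$ we get
\[
 M\otimes P \;\cong\; \bigoplus_{i\in I}\,(M\otimes H),
\qquad
 P\otimes M \;\cong\; \bigoplus_{i\in I}\,(H\otimes M),
\]
as $H$-modules, where on the right the diagonal $H$-action is used throughout. By Lemma \ref{tensoring_with_H}, $H\otimes M$ is a free $H$-module, and a direct sum of free modules is free, hence projective; so $P\otimes M$ is projective. For $M\otimes H$ one first notes that the flip $(1,2)\colon M\otimes H\to H\otimes M$ is an isomorphism of $k$-modules which is \emph{not} $H$-equivariant for the diagonal actions in general, so one cannot directly quote Lemma \ref{tensoring_with_H}. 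Instead I would use that the antipode $S$ (a coalgebra anti-automorphism, with $S^{-1}$ available since $H$ is finite dimensional) induces an isomorphism of $H$-modules $M\otimes H\xrightarrow{\cong}H\otimes M$: concretely, twisting the second tensor factor by $S$ or $S^{-1}$ intertwines the two diagonal actions. Granting this, $M\otimes H$ is again free by Lemma \ref{tensoring_with_H}, so $M\otimes P\cong\bigoplus_I(M\otimes H)$ is free, hence projective.

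Finally I would assemble the two cases. For general projective $P$, choose an $H$-module $P'$ with $P\oplus P'\cong F$ free. Applying the additive functors $M\otimes(-)$ and $(-)\otimes M$ gives
\[
 (M\otimes P)\oplus(M\otimes P')\;\cong\;M\otimes F,
\qquad
 (P\otimes M)\oplus(P'\otimes M)\;\cong\;F\otimes M,
\]
and the right-hand sides are projective by the free case; a direct summand of a projective is projective, so $M\otimes P$ and $P\otimes M$ are projective.

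The only genuinely delicate point is the $H$-module isomorphism $M\otimes H\cong H\otimes M$ needed to reduce $M\otimes H$ to the setting of Lemma \ref{tensoring_with_H}; everything else is formal additivity and commuting tensor with direct sums. I expect that step to be a short computation with Sweedler notation and the antipode axioms, but it is where the actual Hopf-algebra input (invertibility of $S$, which uses finite-dimensionality of $H$) enters. If one prefers to avoid it, an alternative is to prove directly that $M\otimes H$ is free by exhibiting the $H$-basis $\{(S^{-1}(?)\otimes)\,m_j\otimes 1\}$ coming from a $k$-basis $\{m_j\}$ of $M$, which is really the same computation repackaged; I would include whichever version is cleaner once the preceding lemma's conventions are pinned down.
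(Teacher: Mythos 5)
Your argument is correct and is the standard one the paper leaves implicit: reduce to the free case by additivity of $\otimes$ and closure of projectives under retracts and direct sums, then invoke Lemma \ref{tensoring_with_H} for $H\otimes M$. The one ``delicate point'' you flag, the $H$-module isomorphism $M\otimes H\cong H\otimes M$ intertwining the two diagonal actions via the antipode, is exactly the paper's Lemma \ref{switching_lambda} (Khovanov's Lemma 2), so you may simply cite it rather than redo the Sweedler computation.
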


In order to describe distinguished triangles, let us recall the
definition of mapping cones from Khovanov's paper, which is essentially
the same as the definition of mapping cones in the category of chain
complexes. 

\begin{definition}
 Given a morphism $f:M\to N$ in $\bm{C}_{A,H}^{H}$, the pushout
 of the extension
 \[
 0 \rarrow{} M \rarrow{i_{M}} C(M) \rarrow{p_{M}} \Sigma(M) \rarrow{} 0
 \]
 along $f$ is denoted by
 \[
 0 \rarrow{} N \rarrow{j_{f}} C_{f} \rarrow{\delta_{f}} \Sigma(M)
 \rarrow{} 0.
 \]
 Here $i_{M}$ is the composition
 $M\cong M\otimes k\rarrow{1\otimes\lambda} M\otimes H=C(M)$.
 The object $C_{f}$ is called the \emph{mapping cone} of $f$.

 A sequence of the form
 \[
 M \rarrow{f} N \rarrow{j_{f}} C_{f} \rarrow{\delta_{f}} \Sigma(M)
 \]
 for a morphism $f:M\to N$ is called a \emph{standard triangle} in
 $\cT_{A,H}^{H}$. 
\end{definition}

\begin{theorem}[Khovanov]
 The category $\cT_{A,H}^{H}$ becomes a triangulated category 
 with $\Sigma$ a shift functor,
 by declaring a sequence $X\to Y\to Z$ in $\cT_{A,H}^{H}$ to be a
 distinguished triangle if it is isomorphic to a standard triangle. 
\end{theorem}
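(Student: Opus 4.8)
The theorem is due to Khovanov, and the cleanest way I would organise a proof is to realise $\cT_{A,H}^{H}$ as the stable category of a Frobenius exact category and appeal to Happel's theorem that the stable category of a Frobenius category carries a canonical triangulated structure. By Proposition~\ref{equivariant_module_as_module_over_smash_product} the category $\bm{C}_{A,H}^{H}$ is abelian. I would endow it with the exact structure whose conflations are the short exact sequences $0\to X\to Y\to Z\to 0$ that become split after applying the forgetful functor $\bm{C}_{A,H}^{H}\to\lMod{\underline{A}}$; this is the exact category $\bm{C}_{A,H}^{H,\splt}$ mentioned in the introduction. The plan then has three steps: (i) show $\bm{C}_{A,H}^{H,\splt}$ is Frobenius and identify its projective--injective objects; (ii) show that its stable category is exactly $\cT_{A,H}^{H}$; (iii) match Happel's suspension with $\Sigma$ and Happel's distinguished triangles with the standard triangles.

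For (i), the relevant objects are the cones $C(M)=M\otimes H$. Since the $A$-action on $M\otimes V$ is through the first tensor factor only, $C(M)$ is, as an $\underline{A}$-module, a direct sum of $\dim_{k}H$ copies of $M$; moreover the canonical sequence $0\to M\xrightarrow{i_{M}}C(M)\to\Sigma(M)\to 0$ is $\underline{A}$-split, a $k$-linear retraction being $1\otimes\varphi$ for any $\varphi\in H^{\ast}$ with $\varphi(\lambda)=1$, which is $\underline{A}$-linear because $A$ acts on the first factor. Dually, tensoring $0\to\Ker\varepsilon\to H\xrightarrow{\varepsilon}k\to 0$ with $M$ yields an $\underline{A}$-split conflation $0\to\Sigma^{-1}(M)\to C(M)\to M\to 0$. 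Using the induction--restriction adjunction along the extension $\underline{A}\hookrightarrow A\# H$ --- this is where the hypothesis that $H$ is a finite dimensional Hopf algebra enters, via Lemma~\ref{tensoring_with_H}, and it makes $\underline{A}\hookrightarrow A\# H$ behave like a Frobenius extension --- I would check that every $M\otimes H$ is both projective and injective in $\bm{C}_{A,H}^{H,\splt}$, that every object sits in a conflation with projective--injective middle term both as a subobject and as a quotient, and that the projectives and the injectives are exactly the direct summands of the $M\otimes H$. Hence $\bm{C}_{A,H}^{H,\splt}$ is Frobenius.

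For (ii), I would show that two parallel morphisms $f,g:M\to N$ are homotopic in the sense defined above precisely when $f-g$ factors through a projective--injective object. One direction is immediate: a homotopy is by definition a factorisation of $f-g$ through $i_{M}:M\to C(M)$, and $C(M)$ is projective--injective. Conversely, if $f-g$ factors through a projective--injective $P$, then since $C(M)$ is injective and $i_{M}$ is an inflation, the component $M\to P$ extends along $i_{M}$, so $f-g$ factors through $i_{M}$ and is null-homotopic. Thus $\cT_{A,H}^{H}$ is the stable category of $\bm{C}_{A,H}^{H,\splt}$ and is triangulated by Happel. For (iii): Happel's suspension of $M$ is the cokernel of a chosen inflation of $M$ into a projective--injective object; choosing $i_{M}$ identifies it with $\Sigma$, naturally in $M$, and by Lemma~\ref{Sigma} $\Sigma$ is an autoequivalence of $\cT_{A,H}^{H}$. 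Happel's distinguished triangle of a map $f:M\to N$ is obtained by pushing the conflation $0\to M\to C(M)\to\Sigma(M)\to 0$ out along $f$; since the forgetful functor $\bm{C}_{A,H}^{H}\to\lMod{\underline{A}}$ is additive and exact it preserves this pushout, so the pushed-out sequence is again $\underline{A}$-split, and it is by construction the mapping cone sequence $0\to N\xrightarrow{j_{f}}C_{f}\xrightarrow{\delta_{f}}\Sigma(M)\to 0$. Therefore Happel's triangles are exactly the standard triangles, and $\cT_{A,H}^{H}$ with the standard triangles is triangulated with shift functor $\Sigma$.

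I expect the main obstacle to be step (i): fixing the correct exact structure and proving that its projective and injective objects coincide and are precisely the summands of the cones $M\otimes H$, i.e.\ that $\underline{A}\hookrightarrow A\# H$ behaves like a Frobenius extension, together with the compatibility in (ii) between the ad hoc homotopy relation and factoring through a projective--injective object. Once the Frobenius structure is in place the remaining steps are bookkeeping plus a citation of Happel's theorem. An alternative, more hands-on route would be to verify the axioms TR1--TR4 directly using explicit mapping cones, exactly as one does for the homotopy category of chain complexes; but this essentially reproves Happel's theorem in the present setting.
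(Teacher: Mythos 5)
The paper does not actually prove this theorem---it is quoted from Khovanov---but your proposal is correct and follows essentially the route of Khovanov's and Qi's original arguments: the $A$-split exact structure on $\bm{C}_{A,H}^{H}$ is Frobenius with projective--injective objects the direct summands of the cones $M\otimes H$ (the projectivity half is exactly Lemma \ref{cone_is_perp_to_everything}, and the injectivity half is its dual, which holds because $H\cong H^{\ast}$ as $H$-modules so that the left adjoint $C$ and the right adjoint $E$ of $U$ agree up to natural isomorphism), its stable category is $\cT_{A,H}^{H}$ by the factorization argument you give, and Happel's theorem then produces the triangulation with suspension $\Sigma$ and distinguished triangles the standard mapping-cone triangles. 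I see no gaps; your identification of step (i) as the only real work is accurate.
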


The following fact can be verified immediately.

\begin{lemma}
 \label{suspension_and_Hom}
 Let $M$ and $N$ be $H$-equivariant $A$-modules.
 For any $H$-module $V$, the canonical isomorphism of $k$-modules
 \[
  \bm{C}_{A,H}(M,N)\otimes V \rarrow{} \bm{C}_{A,H}(M,N\otimes V)
 \]
 is an isomorphism of $H$-modules.
 In particular, we have isomorphisms of $H$-modules
 \begin{align*}
  \Sigma\bm{C}_{A,H}(M,N) & \cong \bm{C}_{A,H}(M,\Sigma N) \\
  \Sigma^{-1}\bm{C}_{A,H}(M,N) & \cong
  \bm{C}_{A,H}(M,\Sigma^{-1} N).
 \end{align*}
\end{lemma}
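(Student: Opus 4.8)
The plan is to verify the claimed isomorphism of $H$-modules by unwinding definitions. First I would recall the well-known $k$-linear isomorphism
\[
\bm{C}_{A,H}(M,N)\otimes V \rarrow{} \bm{C}_{A,H}(M,N\otimes V),
\qquad f\otimes v \longmapsto \bigl(m\mapsto f(m)\otimes v\bigr),
\]
which is an isomorphism because $\bm{C}_{A,H}(M,-)$ preserves the (underlying $k$-linear) direct sum decomposition of $V$ and the statement is trivially true for $V=k$; here $\bm{C}_{A,H}(M,N)$ denotes the $k$-module of $\underline{A}$-module homomorphisms, i.e. morphisms of the underlying left $A$-modules, with no $H$-linearity imposed. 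The content to check is that this map intertwines the two $H$-actions: on the source, the diagonal $H$-action on $\bm{C}_{A,H}(M,N)\otimes V$ built from the conjugation action of Proposition \ref{module_category_of_modules_over_module_algebra} on the first factor and the given action on $V$; on the target, the conjugation action of Proposition \ref{module_category_of_modules_over_module_algebra} applied to $N\otimes V$, where $N\otimes V$ carries the tensor-product $H$-module structure defined in \S\ref{notation}.

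The key computation is then a Sweedler-notation chase. Writing the conjugation action as $(hf)(m)=h_{(2)}f(S^{-1}(h_{(1)})m)$ and the tensor $H$-action on $N\otimes V$ via $\Delta$, I would compute, for $h\in H$, $f\in\bm{C}_{A,H}(M,N)$, $v\in V$, and $m\in M$, the element $\bigl(h\cdot(f\otimes v)\bigr)$ pushed into $\bm{C}_{A,H}(M,N\otimes V)$ and evaluated at $m$, and separately the element $h\cdot(m\mapsto f(m)\otimes v)$ evaluated at $m$; both should equal
\[
h_{(3)}\,f\!\left(S^{-1}(h_{(1)})m\right)\otimes h_{(2)}v,
\]
using coassociativity to expand $\Delta$ twice and the identity $h_{(1)}S^{-1}(h_{(2)}) = \varepsilon(h)1$ together with counitality to collapse the extra factors. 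This is the routine calculation I would not write out in full; the only place to be careful is keeping the order of the Sweedler legs consistent with the sign/order conventions fixed in \S\ref{notation} for the tensor action and for the antipode $S^{-1}$.

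For the two displayed special cases, I would simply instantiate $V = H/(\lambda)$ and $V=\Ker\varepsilon$, using the definitions $\Sigma(N)=N\otimes(H/(\lambda))$ and $\Sigma^{-1}(N)=N\otimes(\Ker\varepsilon)$ from Khovanov's construction, and likewise $\Sigma$ applied to the $H$-module $\bm{C}_{A,H}(M,N)$ means tensoring that $H$-module with $H/(\lambda)$ on the right. The first displayed isomorphism is the general statement with $V=H/(\lambda)$, and the second with $V=\Ker\varepsilon$; no further argument is needed. I do not expect any genuine obstacle here — the lemma is, as the paper says, verified immediately — the ``hard part'' is merely bookkeeping to confirm that the canonical $k$-linear iso is $H$-equivariant for the conjugation actions, and that reduces to the elementary Hopf-algebra identity $h_{(1)}S^{-1}(h_{(2)})=\varepsilon(h)1$.
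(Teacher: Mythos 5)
Your overall strategy is exactly what the paper intends: it offers no proof at all (the lemma is prefaced only by ``The following fact can be verified immediately''), and the content really is just checking that the canonical $k$-linear map $f\otimes v\mapsto(m\mapsto f(m)\otimes v)$ intertwines the conjugation action of Proposition \ref{module_category_of_modules_over_module_algebra} with the tensor-product action. However, the two concrete formulas you commit to in the ``key computation'' are both wrong, and one of them is an identity that is false in a general Hopf algebra. Carrying out the Sweedler chase with the paper's conventions, the source side gives $h\cdot(f\otimes v)=(h_{(1)}f)\otimes(h_{(2)}v)\mapsto\bigl(m\mapsto h_{(2)}f(S^{-1}(h_{(1)})m)\otimes h_{(3)}v\bigr)$, and the target side gives $(h\cdot g)(m)=h_{(2)}\cdot\bigl(f(S^{-1}(h_{(1)})m)\otimes v\bigr)=h_{(2)}f(S^{-1}(h_{(1)})m)\otimes h_{(3)}v$; the two agree \emph{leg for leg} by coassociativity alone. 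Your claimed common value $h_{(3)}f(S^{-1}(h_{(1)})m)\otimes h_{(2)}v$ has the second and third legs transposed, which is a genuinely different element when $H$ is not cocommutative. More seriously, the identity you plan to use to ``collapse the extra factors,'' namely $h_{(1)}S^{-1}(h_{(2)})=\varepsilon(h)1$, is not valid for a general finite-dimensional Hopf algebra: applying the anti-homomorphism $S^{-1}$ to the antipode axioms yields $h_{(2)}S^{-1}(h_{(1)})=\varepsilon(h)1=S^{-1}(h_{(2)})h_{(1)}$, with the legs in the opposite order. Fortunately no antipode cancellation is needed anywhere in this verification, so the proof survives once these formulas are corrected — but as written the sketch would not compile into a correct computation.

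One further caveat, which is really a gloss on the paper's statement rather than a flaw of yours alone: your justification that the canonical map is bijective ``because $\bm{C}_{A,H}(M,-)$ preserves the direct sum decomposition of $V$'' only works when $V$ decomposes into finitely many summands (or when $M$ is suitably finite); for infinite-dimensional $V$ the map $\Hom(M,N)\otimes V\to\Hom(M,N\otimes V)$ is injective but need not be surjective. Since $H$ is finite dimensional, the modules $H/(\lambda)$ and $\Ker\varepsilon$ actually used in the paper are finite dimensional, so the two displayed isomorphisms are unaffected; it would be worth restricting the general statement to finite-dimensional $V$ or adding a word on why surjectivity holds in the cases needed.
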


Recall that the underlying $k$-linear category of an $H$-module category
$A$ is denoted by $\underline{A}$.

\begin{definition}
 Let $U : \bm{C}_{A,H}^{H} \to \lMod{\underline{A}}$ be the forgetful
 functor.  
 We say a short exact sequence
 \[
 0 \rarrow{} L \rarrow{f} M \rarrow{g} N\rarrow{} 0
 \]
 $\bm{C}_{A,H}^{H}$ is \emph{$A$-split} if 
 $0\to U(L)\to U(M)\to U(N)\to 0$ is a split short exact sequence in
 $\lMod{\underline{A}}$. 
\end{definition}

Qi found a characterization of distinguished triangles in
$\cT_{A,H}^{H}$ in terms of $A$-split sequences.
See Lemma 4.3 of \cite{1205.1814}.  

\begin{lemma}
 \label{Hopf_mapping_cone}
 Let
 \[
  0 \rarrow{} L \rarrow{f} M \rarrow{g} N\rarrow{} 0
 \]
 be an $A$-split short exact sequence in $\bm{C}_{A,H}^{H}$.
 Then there exists a distinguished triangle
 in $\cT_{A,H}^{H}$ of the form
 \[
  L \rarrow{[f]} M \rarrow{[g]} N \rarrow{\delta} \Sigma(L). 
 \]
 Conversely, any distinguished triangle in
 $\cT_{A,H}^{H}$ is isomorphic to the one that arises 
 from an $A$-split short exact sequence in $\bm{C}_{A,H}^{H}$.
\end{lemma}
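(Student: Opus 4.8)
The plan is to reduce both halves of the statement to Khovanov's description of distinguished triangles as those isomorphic (in $\cT_{A,H}^{H}$) to standard triangles $M\xrightarrow{f} N\xrightarrow{j_f} C_f\xrightarrow{\delta_f}\Sigma(M)$, together with the basic homotopy-theoretic properties of the cone functor $C$. First I would record the key mechanism: for any object $M$ the canonical extension $0\to M\xrightarrow{i_M} C(M)\xrightarrow{p_M}\Sigma(M)\to 0$ is $A$-split, since after forgetting the $H$-action it is obtained by tensoring $M$ over $k$ with the short exact sequence of $H$-modules $0\to k\xrightarrow{\lambda} H\to H/(\lambda)\to 0$, which splits as a sequence of $k$-modules; and the pushout of an $A$-split sequence along any morphism in $\bm{C}_{A,H}^{H}$ is again $A$-split, because pushout squares are preserved by the exact forgetful functor $U$ and split epimorphisms are stable under pushout. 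Hence every standard triangle arises from the $A$-split sequence $0\to N\xrightarrow{j_f} C_f\xrightarrow{\delta_f}\Sigma(M)\to 0$, which already gives the converse direction once we know that an arbitrary distinguished triangle is isomorphic in $\cT_{A,H}^{H}$ to a standard one: an isomorphism of triangles can be promoted to a genuine short exact sequence by replacing, if necessary, the middle term, using that a morphism homotopic to an isomorphism in the stable category differs from an actual isomorphism by adding a contractible summand of the form $C(\text{something})$, which is $A$-split off.

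For the forward direction, given an $A$-split short exact sequence $0\to L\xrightarrow{f} M\xrightarrow{g} N\to 0$, I would produce an explicit comparison with the standard triangle of $f$. Choose an $\underline{A}$-linear splitting $s:U(N)\to U(M)$ of $g$; this is exactly the data needed to build, following Khovanov's construction of the mapping cone, a morphism $\theta:C_f\to N$ in $\bm{C}_{A,H}^{H}$ lying over the identity of $N$ and compatible with the connecting maps $\delta_f$ and (the zero-completion of) $g$. The point is that $C_f$ is the pushout $N\cup_L C(L)$, and an $H$-equivariant map out of it amounts to a map $N\to N$ together with a map $C(L)\to N$ agreeing on $L$; using $s$ one writes down the $C(L)\to N$ component via the formula for the action of $\lambda$, exactly as in the chain-complex case. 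One then checks $\theta$ is a homotopy equivalence by exhibiting a homotopy inverse built from $g$ and $s$, or alternatively by invoking that $\theta$ induces an isomorphism on the relevant $\Hom$-functors into the triangulated category $\cT_{A,H}^{H}$ and applying Yoneda. This yields the distinguished triangle $L\xrightarrow{[f]} M\xrightarrow{[g]} N\xrightarrow{\delta}\Sigma(L)$ with $\delta$ the composite of $\delta_f$ with the inverse homotopy equivalence.

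The main obstacle I anticipate is the bookkeeping in the forward direction: verifying that the map $\theta:C_f\to N$ assembled from an $\underline{A}$-splitting is actually $H$-equivariant, not merely $\underline{A}$-linear, and that the resulting homotopy equivalence is natural enough to fit into a morphism of triangles. This is where the compatibility condition between $\mu_{A,M}$ and $\mu_{H,M}$ in Definition \ref{def:equivariant_module_over_module_algebra} and the precise formula $i_M : M\cong M\otimes k\xrightarrow{1\otimes\lambda} M\otimes H$ for the cone inclusion must be used carefully, together with Lemma \ref{suspension_and_Hom} to identify suspensions of $\Hom$-modules. Since the excerpt explicitly flags that this is "essentially the same as the definition of mapping cones in the category of chain complexes" and cites Qi's Lemma 4.3, I would present the comparison map explicitly and then say the remaining verifications are formal, deferring the diagram chases; the only genuinely Hopfological input is Lemma \ref{tensoring_with_H} (freeness of $H\otimes M$), used to guarantee that the contractible corrections introduced when rigidifying homotopy equivalences to isomorphisms are $A$-split and hence do not disturb the exact sequence.
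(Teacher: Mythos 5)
The paper does not prove this lemma: it is quoted from Qi (Lemma 4.3 of \cite{1205.1814}), with Lemma \ref{cone_is_perp_to_everything} recorded as the key input. So your proposal has to be judged against Qi's argument, and while your overall strategy (reduce to standard triangles, exploit $A$-splitness of the cone extension and contractibility of $C(-)$) is the right one, there are two concrete soft spots. First, in the forward direction the comparison map $\theta\colon C_f\to N$ does not need the $\underline{A}$-splitting $s$ at all: since $C_f$ is the pushout $M\sqcup_L C(L)$, the pair $(g\colon M\to N,\ 0\colon C(L)\to N)$ agrees on $L$ (because $g\circ f=0$) and induces $\theta$ by the universal property, so $\theta$ is automatically $H$-equivariant --- the equivariance worry you flag is a non-issue for this map. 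Where $s$ genuinely enters is in showing $\theta$ is a stable isomorphism: one checks $\ker\theta\cong C(L)$ and that $s$ makes the extension $0\to C(L)\to C_f\to N\to 0$ $A$-split, whence it splits in $\bm{C}_{A,H}^{H}$ by (the dual, for cones sitting on the left, of) Lemma \ref{cone_is_perp_to_everything}; then $\theta$ is projection off a contractible summand. Your alternative of writing down an explicit homotopy inverse from $g$ and $s$ and explicit homotopies via $\lambda$ can be made to work, but those deferred ``formal verifications'' are precisely the content, and they are not formal in the Hopfological setting; the splitting-lemma route avoids them.

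Second, your converse direction conflates a standard triangle with its rotation. The $A$-split sequence $0\to N\to C_f\to\Sigma(M)\to 0$ produces, via the first half of the lemma, the triangle $N\to C_f\to\Sigma(M)\to\Sigma(N)$, which is \emph{not} isomorphic (as a triangle, position by position) to the standard triangle $M\to N\to C_f\to\Sigma(M)$. To realize the standard triangle itself from an $A$-split short exact sequence one should instead use
\[
0 \rarrow{} M \rarrow{(i_{M},-f)} C(M)\oplus N \rarrow{} C_f \rarrow{} 0,
\]
which is $A$-split because $i_{M}$ is an $A$-split monomorphism, and then observe that the projection $C(M)\oplus N\to N$ is a stable isomorphism compatible with the structure maps. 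With that substitution (and noting that since distinguished triangles are by definition those isomorphic to standard ones, it suffices to treat standard triangles), the converse goes through; the ``rigidification of homotopy equivalences by adding contractible summands'' you invoke is then unnecessary.
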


The following useful fact is proved as Lemma 4.4 in Qi's paper
and used in the proof of Lemma \ref{Hopf_mapping_cone}.

\begin{lemma}
 \label{cone_is_perp_to_everything}
 Any $A$-split extension of the form
 \[
  0 \rarrow{} L \rarrow{} M \rarrow{} Z\otimes H \rarrow{} 0
 \]
 splits.
\end{lemma}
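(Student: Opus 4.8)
The plan is to exhibit an explicit $H$-module splitting of the surjection $M\to Z\otimes H$ and then to upgrade it to a splitting in $\bm{C}_{A,H}^{H}$ by the standard averaging trick with the integral $\lambda$. First I would recall from Lemma \ref{tensoring_with_H} that $Z\otimes H$ is a free, hence projective, $H$-module. The extension $0\to L\to M\to Z\otimes H\to 0$ is a short exact sequence in $\bm{C}_{A,H}^{H}=\lMod{(A\#H)}$ (Proposition \ref{equivariant_module_as_module_over_smash_product}), but projectivity of $Z\otimes H$ over $H$ alone does not immediately give a splitting over $A\#H$; the point of the lemma is that it does, and the mechanism is the interplay between the $A$-module splitting hypothesis and the $H$-freeness.

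The key steps, in order, are as follows. (1) By the $A$-split hypothesis choose a section $s:Z\otimes H\to M$ in $\lMod{\underline{A}}$, i.e.\ an $\underline{A}$-linear map with $g\circ s=\mathrm{id}$; this need not respect the $H$-action. (2) Observe that $\bm{C}_{A,H}(Z\otimes H, M)=(\lMod{\underline{A}})(Z\otimes H,M)$ carries the left $H$-module structure of Proposition \ref{module_category_of_modules_over_module_algebra}, and that post-composition with $g$ is an $H$-module map to $\bm{C}_{A,H}(Z\otimes H,Z\otimes H)$ whose $H$-invariants contain $\mathrm{id}$ (Corollary \ref{invariant_and_equivariant}). (3) Use freeness: write $Z\otimes H\cong \bigoplus_i (A\#H)\cdot e_i$ — more precisely, use that $Z\otimes H$ is free over $H$ with a $k$-basis $\{z_j\}$ of $Z$ as generators — so that an $H$-equivariant, $A$-linear section is determined by its values on the generators $z_j\otimes 1$, and produce such a section by setting $\tilde s(z_j\otimes h) = h\cdot\big(\text{suitably averaged }s(z_j\otimes 1)\big)$. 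The averaging is: replace $s$ by the map $m\mapsto \lambda_{(2)}\, s\!\big(S^{-1}(\lambda_{(1)})\cdot m\big)$ restricted appropriately, which lands in the $H$-invariants of the Hom-module and is still $\underline{A}$-linear; one then checks $g\circ\tilde s=\mathrm{id}$ using that $g$ is $H$-equivariant and $\lambda$ is a left integral with $\varepsilon(\lambda)$ normalized (or, if $\varepsilon(\lambda)=0$ as in the non-semisimple case, one instead checks directly on generators that the $H$-equivariant extension of $s|_{\{z_j\otimes 1\}}$ composed with $g$ is the identity, using $g(z_j\otimes 1)=z_j\otimes 1$ up to the chosen identification). (4) Conclude that the sequence splits in $\bm{C}_{A,H}^{H}$.

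The main obstacle is step (3): reconciling the $\underline{A}$-linear section coming from the $A$-split hypothesis with $H$-equivariance when $H$ is non-semisimple, so that naive averaging by $\lambda$ (which has $\varepsilon(\lambda)=0$) cannot simply rescale a projector. The correct argument uses that, because $Z\otimes H$ is $H$-free on the $k$-basis of $Z$, an $A\#H$-linear map out of it is freely determined by $\underline{A}$-linear data on the generators — so one only needs the section to be $\underline{A}$-linear on the submodule $Z\otimes 1$ (where "$1$" means the generating copy), and then extends $H$-equivariantly for free. I expect the bookkeeping of this "define on generators, extend $H$-equivariantly, verify it is a two-sided section" to be the technical heart; everything else is formal from Lemma \ref{tensoring_with_H}, Proposition \ref{equivariant_module_as_module_over_smash_product}, and the Hom-module structure of Proposition \ref{module_category_of_modules_over_module_algebra}.
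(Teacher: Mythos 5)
Your proposal has a genuine gap at its acknowledged ``technical heart'', step (3), and neither of the two mechanisms you offer there can be completed as stated. The averaging $s\mapsto\lambda\cdot s$, i.e.\ $m\mapsto\lambda_{(2)}\,s\bigl(S^{-1}(\lambda_{(1)})m\bigr)$, does produce an $H$-invariant, $\underline{A}$-linear map, but composing with the projection $g$ gives $g\circ(\lambda\cdot s)=\lambda\cdot\mathrm{id}_{Z\otimes H}=\varepsilon(\lambda)\,\mathrm{id}=0$ in the non-semisimple case, as you yourself note. Your fallback --- prescribe the section on the $H$-free generators $z_j\otimes 1$ and extend $H$-equivariantly --- produces an $H$-linear section of $g$, but there is no reason for it to be $\underline{A}$-linear: $Z\otimes H$ is free over $H$ on those generators (Lemma \ref{tensoring_with_H}), not free over $A\#H$, and the two actions do not commute but satisfy $h(am)=(h_{(1)}a)(h_{(2)}m)$, so the $H$-equivariant extension of $\underline{A}$-linear data given on an $H$-basis need not respect the $A$-action. (In particular $Z\otimes H$ with the diagonal action is in general not isomorphic in $\bm{C}_{A,H}^{H}$ to $C(U(Z))$, so one cannot reduce to a module that is genuinely free over $A\#H$.) This is exactly the verification you defer, and it is where the argument breaks.

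The missing idea --- this is Lemma 4.4 of Qi's paper \cite{1205.1814}, which the present paper cites rather than reproves --- is to average not $s$ but $s\circ e$, where $e$ is a ``$\lambda$-divided identity''. Concretely, $\bm{C}_{A,H}(Z\otimes H,Z\otimes H)\cong\bm{C}_{A,H}(Z\otimes H,Z)\otimes H$ as $H$-modules by Lemma \ref{suspension_and_Hom}, and this is a free $H$-module by Lemma \ref{tensoring_with_H}; hence its homology vanishes (Example \ref{projective_module_is_acyclic}), which says precisely that every $H$-invariant element lies in $\lambda\cdot\bm{C}_{A,H}(Z\otimes H,Z\otimes H)$. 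Since $\mathrm{id}_{Z\otimes H}$ is invariant (Corollary \ref{invariant_and_equivariant}), one may write $\mathrm{id}_{Z\otimes H}=\lambda\cdot e$ for some $\underline{A}$-linear $e$. Then $\lambda\cdot(s\circ e)$ is $H$-invariant, hence a morphism in $\bm{C}_{A,H}^{H}$, and because post-composition with $g$ is an $H$-module map (Proposition \ref{module_category_of_modules_over_module_algebra}) and $g\circ s=\mathrm{id}$, we get $g\circ\bigl(\lambda\cdot(s\circ e)\bigr)=\lambda\cdot(g\circ s\circ e)=\lambda\cdot e=\mathrm{id}_{Z\otimes H}$. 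Your steps (1), (2) and (4) are fine; it is this use of the acyclicity of the endomorphism $H$-module of the cone that must replace your step (3).
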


\begin{definition}
 We say a short exact sequence
 \[
 0 \rarrow{} L \rarrow{f} M \rarrow{g} N \rarrow{} 0
 \]
 in $\bm{C}_{A,H}^{H}$ \emph{homotopically splits} if there
 exists a morphism $s: N\to M$ with $g\circ s\simeq 1_{N}$.
 The morphism $s$ is called a \emph{homotopy section} of $g$.
\end{definition}

\begin{corollary}
 Let
 \[
 0 \rarrow{} L \rarrow{f} M \rarrow{g} N \rarrow{} 0
 \]
 be an $A$-split short exact sequence in $\bm{C}_{A,H}^{H}$.
 Then it homotopically splits if and only if
 there exists a morphism $t: M\to L$ with $f\circ t\simeq 1_{L}$.
\end{corollary}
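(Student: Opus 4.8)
The statement to prove is the last Corollary: for an $A$-split short exact sequence $0 \to L \xrightarrow{f} M \xrightarrow{g} N \to 0$ in $\bm{C}_{A,H}^{H}$, the existence of a homotopy section $s \colon N \to M$ with $g \circ s \simeq 1_{N}$ is equivalent to the existence of a homotopy retraction $t \colon M \to L$ with $f \circ t \simeq 1_{L}$. This is the Hopfological analogue of the classical fact that a chain-homotopy section of a degreewise-split surjection is equivalent to a chain-homotopy retraction of the corresponding injection, and the proof should follow the same pattern, transported through the triangulated structure on $\cT_{A,H}^{H}$.

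The plan is to work in the stable category $\cT_{A,H}^{H}$. By Lemma \ref{Hopf_mapping_cone}, the $A$-split sequence gives a distinguished triangle $L \xrightarrow{[f]} M \xrightarrow{[g]} N \xrightarrow{\delta} \Sigma(L)$ in $\cT_{A,H}^{H}$. Now observe:
\begin{itemize}
 \item $g$ admits a homotopy section $s$ (i.e. $[g]$ is a split epimorphism in $\cT_{A,H}^{H}$) if and only if $\delta = 0$ in $\cT_{A,H}^{H}$. Indeed, if $[g]\circ[s] = 1_N$ then $\delta = \delta\circ[g]\circ[s] = 0$ since $\delta\circ[g] = 0$ in a triangle; conversely, if $\delta = 0$, a standard argument with the triangulated axioms (apply $\cT_{A,H}^{H}(N,-)$ to the triangle and use that $\delta_* = 0$) produces a section of $[g]$.
 \item Symmetrically, rotating the triangle to $\Sigma^{-1}(N) \xrightarrow{-\Sigma^{-1}\delta} L \xrightarrow{[f]} M \xrightarrow{[g]} N$ and using Lemma \ref{Sigma}, we see that $f$ admits a homotopy retraction $t$ (i.e. $[f]$ is a split monomorphism) if and only if $\Sigma^{-1}\delta = 0$, equivalently $\delta = 0$ (applying the autoequivalence $\Sigma$).
\end{itemize}
Combining these two equivalences: $g$ homotopically splits $\iff \delta = 0$ in $\cT_{A,H}^{H}$ $\iff f$ has a homotopy retraction. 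The final step is to unwind the stable-category statements back to $\bm{C}_{A,H}^{H}$: $[g]\circ[s]=1_N$ in $\cT_{A,H}^{H}$ is exactly $g\circ s \simeq 1_N$ by definition of $\cT_{A,H}^{H}$, and likewise for $f \circ t \simeq 1_L$.

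The main obstacle, and the only point requiring genuine care rather than formal nonsense, is the direction ``$\delta = 0 \implies [g]$ splits'' and its mirror: one must make sure the relevant long exact sequence obtained by applying $\Hom_{\cT_{A,H}^{H}}(N,-)$ (or $\Hom_{\cT_{A,H}^{H}}(-,L)$) to the triangle is exact in the needed spot, which is standard for any triangulated category, but one should also check that a lift at the level of $\cT_{A,H}^{H}$ can be chosen — it can, since we only need existence of $s$ and $t$ as morphisms in $\bm{C}_{A,H}^{H}$ and the homotopy relation is precisely the kernel of the projection to $\cT_{A,H}^{H}$. An alternative, more hands-on route that avoids invoking the octahedral/rotation machinery: pick any set-theoretic $A$-linear splitting $\sigma$ of $g$ (guaranteed by $A$-splitness), measure the failure of $\sigma$ to be $H$-equivariant by a morphism $C(N) \to L$, and translate the homotopy-section condition into the solvability of a ``coboundary'' equation; the same difference controls the homotopy-retraction condition on the $A$-linear splitting of $f$. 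Either way the two conditions are governed by the single obstruction class $[\delta]$, so I would present the triangulated-category argument as the clean version and remark that it also follows from a direct computation with the mapping cone.
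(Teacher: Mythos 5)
Your proposal is correct and follows essentially the same route as the paper: both pass to the distinguished triangle $L\rarrow{[f]}M\rarrow{[g]}N\rarrow{\delta}\Sigma(L)$ supplied by Lemma \ref{Hopf_mapping_cone} and use the standard triangulated-category equivalence between ``$[g]$ admits a section,'' ``the triangle is split (equivalently $\delta=0$),'' and ``$[f]$ admits a retraction.'' You merely spell out the $\delta=0$ characterization that the paper compresses into ``isomorphic to the trivial triangle.''
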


\begin{proof}
 Since the sequence splits in $\lMod{\underline{A}}$, it defines a
 triangle 
 \[
  L \rarrow{[f]} M \rarrow{[g]} N \rarrow{\delta} \Sigma(L)
 \]
 in $\cT_{A,H}^{H}$.
 Then $f$ has a homotopy section if and only if this triangle is
 isomorphic to the trivial triangle, which in turn is equivalent to
 saying that $[f]$ is a section, or there exists a morphism $t: M\to L$
 with $f\circ t\simeq 1_{L}$.
\end{proof}

We have the following closely related fact.

\begin{lemma}
 \label{mapping_cone_of_null_homotopic_map}
 A morphism $f:X\to Y$ in $\bm{C}_{A,H}^{H}$ is homotopic to $0$, if and
 only if there exists an isomorphism of extensions
 \[
 \xymatrix{
 0 \ar[r] & Y \ar@{=}[d] \ar[r]^{j_{f}}
   & C_{f} \ar[d]^{\cong}
   \ar[r]^{\delta_{f}} &  \Sigma(X) \ar@{=}[d] \ar[r]
   & 0 \\   
 0 \ar[r] & Y \ar[r]^(.35){(1_{Y},0)} & Y\oplus \Sigma(X)
 \ar[r]^(.55){\pr_{2}} & \Sigma(X) \ar[r] & 0.
 }
 \]
\end{lemma}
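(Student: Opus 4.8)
The plan is to prove both implications by constructing, or extracting, the relevant homotopy and the isomorphism of extensions directly from the definition of the mapping cone as a pushout.

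First I would handle the forward direction. Suppose $f \simeq 0$, so there is a homotopy $\varphi : C(X) \to Y$ with $\varphi \circ i_{X} = f$ (using $f - 0 = f$ in the homotopy diagram). By the universal property of the pushout
\[
\xymatrix{
0 \ar[r] & X \ar[r]^{i_{X}} \ar[d]_{f} & C(X) \ar[d] \ar[r]^{p_{X}} & \Sigma(X) \ar[r] & 0 \\
0 \ar[r] & Y \ar[r]^{j_{f}} & C_{f} \ar[r]^{\delta_{f}} & \Sigma(X) \ar[r] & 0,
}
\]
the pair $(j_{f} : Y \to C_{f}, \varphi : C(X) \to Y \text{ composed into } Y)$ — more precisely the maps $1_{Y} : Y \to Y$ and $\varphi : C(X) \to Y$, which agree after restriction along $f$ and $i_{X}$ since $\varphi i_{X} = f = 1_{Y} \circ f$ — induce a morphism $r : C_{f} \to Y$ with $r \circ j_{f} = 1_{Y}$. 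Thus $j_{f}$ is a split monomorphism, and the short exact sequence $0 \to Y \to C_{f} \to \Sigma(X) \to 0$ splits. The splitting $(r, \delta_{f}) : C_{f} \to Y \oplus \Sigma(X)$ is the desired isomorphism of extensions, and one checks it is compatible with $j_{f}$ and $\delta_{f}$ on the nose.

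For the converse, suppose we are given an isomorphism of extensions as displayed; call it $h : C_{f} \xrightarrow{\cong} Y \oplus \Sigma(X)$, so $h \circ j_{f} = (1_{Y}, 0)$ and $\pr_{2} \circ h = \delta_{f}$. Composing $h^{-1}$ with the first projection gives a retraction $r := \pr_{1} \circ h : C_{f} \to Y$ of $j_{f}$. Now recall $C_{f}$ is the pushout of $i_{X}$ along $f$, so there is a canonical map $q : C(X) \to C_{f}$ with $q \circ i_{X} = j_{f} \circ f$ and $\delta_{f} \circ q = p_{X}$. Set $\varphi := r \circ q : C(X) \to Y$. Then $\varphi \circ i_{X} = r \circ q \circ i_{X} = r \circ j_{f} \circ f = 1_{Y} \circ f = f$, which is exactly the condition in the definition of homotopy that $f \simeq 0$ (taking $g = 0$, so $f - g = f$). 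One should double-check that $\varphi$ is indeed a morphism in $\bm{C}_{A,H}^{H}$, i.e. $H$-equivariant — this is automatic since it is a composite of morphisms in $\bm{C}_{A,H}^{H}$.

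I expect the only mild subtlety to be bookkeeping with the pushout's universal property: one must be careful that the induced retraction $r$ in the forward direction genuinely satisfies $\delta_{f} \circ (\text{inclusion of } Y) = 0$ and $\delta_{f} \circ (\text{image of } \varphi\text{-leg})$ reproduces $p_{X}$, so that $(r, \delta_{f})$ is an isomorphism of extensions and not merely an abstract splitting; this follows from the explicit description of $C_{f}$ as $(Y \oplus C(X))/\{(f(x), -i_{X}(x))\}$ and the fact that $p_{X}$ factors through it. No genuine obstacle arises; the statement is essentially a restatement of "$f$ null-homotopic $\iff$ the standard triangle on $f$ splits," packaged at the level of extensions rather than triangles. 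I would keep the write-up short, citing the pushout defining $C_{f}$ and Lemma \ref{cone_is_perp_to_everything} is not even needed here.
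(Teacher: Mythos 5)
Your proof is correct and follows essentially the same route as the paper's: both directions hinge on producing a retraction $r$ of $j_{f}$ via the pushout description of $C_{f}$, and on the identity $r\circ q\circ i_{X}=r\circ j_{f}\circ f=f$ to translate between the splitting and the null-homotopy. The only cosmetic difference is that you let the short five lemma certify that $(r,\delta_{f})$ is an isomorphism of extensions, whereas the paper constructs the explicit inverse $j_{f}+s$ using the cokernel property of $\delta_{f}$.
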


\begin{proof}
 Suppose $f\simeq 0$.
 By definition, there exists $h:C(X)\to Y$ such
 that $f=h\circ i_{X}$, which gives rise to a morphism $r:C_{f}\to Y$
 making the diagram
 \[
 \xymatrix{
  X \ar[d]_{i_{X}} \ar[r]^{f} & Y \ar[d]_{j_{f}} \ar[ddr]^{1_{Y}} 
   & \\ 
 C(X) \ar[drr]_{h} \ar[r]^{q_{f}} & C_{f} \ar@{.>}[dr]^(.3){r} 
   & \\ 
   & & Y
 }
 \]
 commutative.
 Then we have
 \[
  (1-j_{f}\circ r)\circ j_{f} = j_{f} - j_{f} = 0.
 \]
 Since $\delta_{f}$ is a cokernel of $j_{f}$, and we obtain a
 morphism $s: \Sigma(X)\to C_{f}$ with 
 \[
  1-j_{f}\circ r = s\circ\delta_{f}.
 \]
 Then the maps
 \begin{align*}
  (r,\delta_{f}) & : C_{f} \rarrow{} Y\oplus\Sigma(X) \\
  j_{f}+s & : Y\oplus\Sigma(X) \rarrow{} C_{f}
 \end{align*}
 are inverse to each other and we obtain an isomorphism of extensions
 that we wanted.

 Conversely, suppose we have a map
 $\varphi:C_{f}\to Y\oplus\Sigma(X)$ which defines an isomorphism of
 extensions. Then in the pushout diagram
 \[
 \xymatrix{
  X \ar[r]^{f} \ar[d]_{i_{X}}
   & Y \ar[d]^{j_{f}} \\ 
 C(X) \ar[r]_{q_{f}} & C_{f},
 }
 \]
 the composition $\pr_{2}\circ\varphi:C_{f}\to Y$ defines a left inverse
 to $j_{f}$ and thus $f\simeq 0$.
\end{proof}

The mapping cone construction has the following nice property.

\begin{lemma}
 \label{mapping_cone_and_A-hom}
 Let $f: M\to N$ be a morphism in $\bm{C}_{A,H}^{H}$ and $P$ be an
 object of $\bm{C}_{A,H}^{H}$ which is projective as an
 $\underline{A}$-module. 
 Let $\bm{C}_{A,H}(P,f): \bm{C}_{A,H}(P,M)\to \bm{C}_{A,H}(P,N)$ be the
 morphism induced by $f$, then we have a natural isomorphism 
 \[
  C_{\bm{C}_{A,H}(P,f)} \cong \bm{C}_{A,H}(P,C_{f}).
 \]
\end{lemma}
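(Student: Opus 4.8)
The plan is to apply the functor $\bm{C}_{A,H}(P,-)$ to the pushout square that defines the mapping cone $C_{f}$ and to recognise the result as the pushout square defining a mapping cone. Recall that $C_{f}$ is by definition the pushout of $i_{M}\colon M\to C(M)$ along $f\colon M\to N$, where $i_{M}$ is $1_{M}\otimes\lambda$ composed with the identification $M\cong M\otimes k$. So there are two things to check: (i) $\bm{C}_{A,H}(P,-)$ is an exact functor $\bm{C}_{A,H}^{H}\to\lMod{H}$, so that it carries this pushout to a pushout in $\lMod{H}$; and (ii) under the canonical identification $\bm{C}_{A,H}(P,C(M))=\bm{C}_{A,H}(P,M\otimes H)\cong\bm{C}_{A,H}(P,M)\otimes H=C(\bm{C}_{A,H}(P,M))$ of Lemma \ref{suspension_and_Hom}, the induced map $(i_{M})_{*}$ corresponds to $i_{\bm{C}_{A,H}(P,M)}$. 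Granting these, $\bm{C}_{A,H}(P,C_{f})$ is the pushout of $i_{\bm{C}_{A,H}(P,M)}$ along $\bm{C}_{A,H}(P,f)$, which is precisely $C_{\bm{C}_{A,H}(P,f)}$.

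For (i): first, $\bm{C}_{A,H}(P,-)$ really lands in $\lMod{H}$, because any morphism $g$ of $\bm{C}_{A,H}^{H}$ is an $H$-invariant element of the relevant hom-module by Corollary \ref{invariant_and_equivariant}, and composition in the $H$-module category $\bm{C}_{A,H}$ (Proposition \ref{module_category_of_modules_over_module_algebra}) with one slot frozen at an invariant element is an $H$-module map; functoriality is immediate. On underlying $k$-modules, $\bm{C}_{A,H}(P,-)$ is the forgetful functor $U\colon\bm{C}_{A,H}^{H}\to\lMod{\underline{A}}$ followed by $(\lMod{\underline{A}})(P,-)$; since $P$ is $\underline{A}$-projective and $U$ is exact, this composite is exact, and exactness in $\lMod{H}$ is detected in $\lMod{k}$. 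One could also bypass projectivity here: $i_{M}$ is an $\underline{A}$-split monomorphism (pick $\ell\colon H\to k$ with $\ell(\lambda)=1$), hence so is $(i_{M},-f)\colon M\to C(M)\oplus N$, so $0\to M\to C(M)\oplus N\to C_{f}\to 0$ is an $A$-split short exact sequence, which the additive functor $\bm{C}_{A,H}(P,-)$ preserves; this is the route closer in spirit to the $A$-split methods used elsewhere in this section.

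For (ii): the canonical isomorphism of Lemma \ref{suspension_and_Hom} sends $\varphi\otimes v$ to the morphism $p\mapsto\varphi(p)\otimes v$. Since $i_{M}$ is $1_{M}\otimes\lambda$ composed with $M\cong M\otimes k$, postcomposition with $i_{M}$ sends $\varphi$ to $p\mapsto\varphi(p)\otimes\lambda$, which is exactly the image of $\varphi\otimes\lambda$, i.e. $i_{\bm{C}_{A,H}(P,M)}(\varphi)$. This gives (ii), and naturality of the resulting isomorphism in $P$ and $f$ follows from the naturality of the isomorphism in Lemma \ref{suspension_and_Hom} together with functoriality of the pushout.

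The only genuinely nontrivial point is securing exactness of $\bm{C}_{A,H}(P,-)$ — that is, that the hypothesis on $P$ (or the $A$-splitness of $0\to M\to C(M)\oplus N\to C_{f}\to 0$) lets us commute $\bm{C}_{A,H}(P,-)$ past the pushout defining $C_{f}$; the identification in (ii) is then just a matter of unwinding the definition of the canonical map.
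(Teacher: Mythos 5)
Your proof is correct and rests on the same two ingredients as the paper's: exactness of $\bm{C}_{A,H}(P,-)$ coming from $\underline{A}$-projectivity of $P$, and the identification $\bm{C}_{A,H}(P,M\otimes H)\cong\bm{C}_{A,H}(P,M)\otimes H$ of Lemma \ref{suspension_and_Hom}; the only difference is packaging --- you push the functor through the defining pushout square directly, whereas the paper applies it to the extension $0\to N\to C_{f}\to\Sigma M\to 0$ and compares the two resulting extensions via the universal property of the pushout. Your observation that the $\underline{A}$-splitness of $0\to M\to C(M)\oplus N\to C_{f}\to 0$ already suffices (so that projectivity of $P$ could be dropped) is a worthwhile bonus, consistent with the $A$-split techniques used elsewhere in the paper.
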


\begin{proof}
 Since $P$ is projective as an $\underline{A}$-module,
 $\bm{C}_{A,H}(P,-)$ is an 
 exact functor and we obtain a diagram of short exact sequences
 \[
 \xymatrix{
  0 \ar[r] & \bm{C}_{A,H}(P,N) \ar@{=}[d] \ar[r]
   & C_{\bm{C}_{A,H}(P,f)} \ar[d] \ar[r]
   & \Sigma\bm{C}_{A,H}(P,M) \ar[d]^{\cong}
   \ar[r] & 0 \\
   0 \ar[r] & \bm{C}_{A,H}(P,N) \ar[r]
   & \bm{C}_{A,H}(P,C_{f}) 
   \ar[r] & \bm{C}_{A,H}(P,\Sigma(M)) 
   \ar[r] & 0,
 }
 \]
 where the middle vertical arrow is the morphism obtained by the
 universality of pushout. Since the right vertical arrow is an
 isomorphism by Lemma \ref{suspension_and_Hom}, so is the middle
 vertical arrow.
\end{proof}

\subsection{Homological algebra in Hopfological algebra}
\label{homological_algebra_in_stable_module_category}

In order to perform homological algebra in $\bm{C}_{A,H}^{H}$,
$\bm{C}_{A,H}$, and $\cT_{A,H}^{H}$, we need homology.
Let us recall the definition from Qi's paper.

\begin{definition}
 For a left $H$-module $M$, we denote
 \begin{align*}
  Z(M) & = M^{H} =
  \set{m\in M}{hm=\varepsilon(h)m \text{ for all $h\in H$}} \\ 
  B(M) & = \lambda M \\
  H(M) & = Z(M)/B(M).  
 \end{align*}
 The functor $H: \lMod{H} \to \lMod{k}$
 is called the \emph{canonical homological functor}. 
 The composition with the forgetful functor
 $\bm{C}_{A,H}^{H}\to\lMod{H}$ is also denoted by
 \[
  H : \bm{C}_{A,H}^{H} \rarrow{} \lMod{k}.
 \]
\end{definition}

\begin{example}
 \label{projective_module_is_acyclic}
 Suppose $P$ is projective as an $H$-module.
 There exists a free $H$-module $F$ with $F\cong P\oplus Q$ as 
 $H$-modules. 
 The homology of $H$ is trivial, since $Z(H)$ is the submodule of
 integrals, which is known to be of $1$-dimensional over $k$ generated
 by a fixed integral $\lambda$.
 It implies that $H(F)=0$, and we have $H(P)=0$. 
 In particular, $H(C(M))=0$ for any $H$-equivariant $A$-module $M$ and
 we see that the canonical homological functor descends to 
 \[
 H: \cT_{A,H}^{H} \rarrow{} \lMod{k}.
 \]
\end{example}

\begin{example}
 \label{H-homotopy_class_by_homology} 
 Let $M$ and $N$ be $H$-equivariant $A$-modules. Then by Corollary
 \ref{invariant_and_equivariant}, we have 
 \[
  Z\left(\bm{C}_{A,H}(M,N)\right) = 
 \bm{C}_{A,H}^{H}(M,N). 
 \]
 For morphisms $f,g:M\to N$ in
 $\bm{C}_{A,H}^{H}$, 
 $f \simeq g$ if and only if $f-g\in
 B\left(\bm{C}_{A,H}(M,N)\right)$
 by definition.

 Thus $H(\bm{C}_{A,H}(M,N))$ can be
 identified with the set of $H$-homotopy classes of
 $H$-equivariant $A$-module maps from $M$ to $N$. 
 In other words,
 \[
  H\left(\bm{C}_{A,H}(M,N)\right) =
 \cT_{A,H}^{H}(M,N). 
 \]
 In particular, we have an isomorphism
 \[
 \Ext^{1}_{\cT_{A,H}^{H}}(M,N) \cong \cT_{A,H}^{H}(M,\Sigma(N)) =
 H(\bm{C}_{A,H}(M,\Sigma(N))). 
 \]
\end{example}

\begin{proposition}
 \label{H-homology_is_homological}
 The canonical homological functor is homological, i.e.~any triangle in
 $\cT_{A,H}^{H}$ induces a long exact sequence by the canonical
 homological functor $H$.
\end{proposition}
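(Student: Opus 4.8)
The plan is to express the canonical homological functor as a representable functor on the triangulated category $\ho(\lMod{H})$ precomposed with the forgetful functor, and then to invoke the standard fact that representable functors out of a triangulated category are homological.

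First I would check that the forgetful functor $U:\bm{C}_{A,H}^{H}\to\lMod{H}$ descends to a triangulated functor $\overline{U}:\cT_{A,H}^{H}\to\ho(\lMod{H})$. It preserves homotopies, since a homotopy $\varphi:C(M)\to N$ becomes, after forgetting the $A$-action, an $H$-module map $U(M)\otimes H\to U(N)$ exhibiting $U(f)\simeq U(g)$. It is compatible with the shift, because $\Sigma=-\otimes(H/(\lambda))$ on both sides and this tensor product does not involve the $A$-action (so likewise $U(C(M))=C(U(M))$ and $U(\Sigma^{n}M)=\Sigma^{n}U(M)$). Finally, $U$ preserves the colimits defining mapping cones, so it carries a standard triangle $M\to N\to C_{f}\to\Sigma(M)$ in $\cT_{A,H}^{H}$ to the standard triangle $U(M)\to U(N)\to C_{U(f)}\to\Sigma(U(M))$ in $\ho(\lMod{H})$; hence $\overline{U}$ is triangulated.

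Next I would identify $H(-)$ with a representable functor. Evaluation at $1\in k$ gives a $k$-linear isomorphism $\bm{C}_{k,H}(k,N)\to N$, and a short computation with the formula $(hf)(m)=h_{(2)}f(S^{-1}(h_{(1)})m)$ of Proposition \ref{module_category_of_modules_over_module_algebra} — using that $k$ carries the $H$-action via $\varepsilon$ and that $\varepsilon\circ S^{-1}=\varepsilon$ — shows it is $H$-equivariant. Combined with Example \ref{H-homotopy_class_by_homology} (taking $A=k$ and first argument $k$), this gives a natural isomorphism $H(-)\cong\cT_{k,H}^{H}(k,-)=\ho(\lMod{H})(k,-)$ of functors on $\lMod{H}$; since the canonical homological functor on $\bm{C}_{A,H}^{H}$ is by definition $H\circ U$, we obtain a natural isomorphism $H(-)\cong\ho(\lMod{H})(k,\overline{U}(-))$ on $\cT_{A,H}^{H}$, compatibly with the shifts by the identifications above.

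To conclude, the representable functor $\ho(\lMod{H})(k,-)$ is homological, and precomposing a homological functor with a triangulated functor gives a homological functor; hence $H$ sends every distinguished triangle of $\cT_{A,H}^{H}$ to a long exact sequence. The step needing the most care is showing $\overline{U}$ is triangulated, in particular that the connecting morphisms match up; this follows from the naturality of the mapping cone construction once one observes that $U$ commutes with the tensor-by-$H$-module operations and with the finite colimits involved. A more computational alternative would avoid $\overline{U}$: by Lemma \ref{Hopf_mapping_cone} every distinguished triangle is isomorphic to one arising from an $A$-split — hence $k$-split — short exact sequence $0\to L\to M\to N\to 0$, whose tensor products with $(H/(\lambda))^{\otimes n}$ and $(\Ker\varepsilon)^{\otimes n}$ remain short exact, and one applies the snake lemma after unwinding $H(-)=Z(-)/B(-)$; I expect the representable-functor argument to be cleaner and would present it.
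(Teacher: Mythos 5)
Your argument is correct and is essentially the paper's own proof: both identify $H(-)$ with the representable functor $\cT_{k,H}^{H}(k,-)$ via the isomorphism $M\cong\bm{C}_{k,H}(k,M)$ and Example \ref{H-homotopy_class_by_homology}, and then compose with the triangle-preserving forgetful functor $\cT_{A,H}^{H}\to\cT_{k,H}^{H}$. The paper simply states these two steps without spelling out the equivariance of evaluation or the compatibility of $U$ with cones and shifts, which you verify explicitly.
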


\begin{proof}
 For a left $H$-module $M$, we have an identification
 \[
  M \cong (\lMod{H})(k,M) = \bm{C}_{k,H}^{H}(k,M).
 \]
 in $\lMod{H}=\bm{C}_{k,H}^H$ and we have
 \[
 H(M) \cong \cT_{k,H}^{H}(k,M) 
 \]
 by the previous example. 
 Since $\cT_{k,H}^{H}$ is a triangulated category, this is a
 homological functor.
 The functor $\cT_{A,H}^{H}\to \cT_{k,H}^{H}$ which forgets $A$-module
 structures preserves triangles and thus
 \[
  H : \cT_{A,H}^{H}\rarrow{} \lMod{k}
 \]
 is also homological.
\end{proof}

\begin{definition}
 \label{def:quism_for_H-module}
 A morphism $f:M\to N$ in $\bm{C}_{A,H}^{H}$ is
 called a \emph{quasi-isomorphism} or a \emph{quism} if
 the induced map $H(f): H(M)\to H(N)$ is an isomorphism in
 $\lMod{k}$.
 It is called a \emph{$\Sigma$-quism} if
 $H(\Sigma^{n}(f)): H(\Sigma^{n}(M))\to H(\Sigma^{n}(N))$ is an
 isomorphism for all $n\in\Z$.
 The wide subcategory of $\Sigma$-quisms is denoted by
 $\Quism^{\Sigma}$. 

 An object $M$ of $\bm{C}_{A,H}^{H}$ is called
 \emph{acyclic} if $H(M)=0$.
 The class of acyclic objects is denoted by
 $\Triv_{A,H}$.
 If $H(\Sigma^{n}M)=0$ for all $n\in \Z$, $M$ is called
 \emph{$\Sigma$-acyclic}. The class of $\Sigma$-acyclic
 objects is denoted by $\Triv_{A,H}^{\Sigma}$.
 We regard them as full subcategories of $\bm{C}_{A,H}^{H}$ or
 $\cT_{A,H}^{H}$. 
\end{definition}


\begin{remark}
 Khovanov \cite{math/0509083} and Qi \cite{1205.1814} used a different
 notion of quasi-isomorphisms. A morphism $f: M\to N$ in
 $\bm{C}_{A,H}^{H}$ is a quasi-isomorphism in their sense if it is a
 homotopy equivalence in $\lMod{H}$.
 Hence their acyclic objects are different from ours.
\end{remark}

\begin{lemma}
 \label{Triv*_is_thick}
 The category $\Triv_{A,H}^{\Sigma}$ is a thick subcategory of
 both $\bm{C}_{A,H}^{H}$ and $\cT_{A,H}^{H}$. 
\end{lemma}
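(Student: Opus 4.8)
The plan is to prove both statements by one mechanism: the canonical homological functor $H$, together with all of its suspensions, detects membership in $\Triv_{A,H}^{\Sigma}$ and produces long exact sequences. Recall that $M\in\Triv_{A,H}^{\Sigma}$ exactly when $H(\Sigma^{n}M)=0$ for every $n\in\Z$. Thus $\Triv_{A,H}^{\Sigma}$ is tautologically closed under $\Sigma$ and $\Sigma^{-1}$, and it is closed under retracts because $H$ and each $\Sigma^{n}$ are functors: if $Z\xrightarrow{i}Y\xrightarrow{r}Z$ with $ri=\mathrm{id}_{Z}$ and $H(\Sigma^{n}Y)=0$, then $\mathrm{id}_{H(\Sigma^{n}Z)}=H(\Sigma^{n}r)\circ H(\Sigma^{n}i)$ factors through $0$, so $H(\Sigma^{n}Z)=0$. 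In particular $\Triv_{A,H}^{\Sigma}$ is closed under direct summands.

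For $\cT_{A,H}^{H}$ it remains to check closure under cones. Given a distinguished triangle $X\to Y\to Z\to\Sigma X$ with two of its vertices in $\Triv_{A,H}^{\Sigma}$, apply the homological functor $H\colon\cT_{A,H}^{H}\to\lMod{k}$ of Proposition \ref{H-homology_is_homological} to all the suspended and rotated copies of this triangle and splice the resulting three-term exact sequences into a doubly infinite long exact sequence
\[
 \cdots\to H(\Sigma^{n}X)\to H(\Sigma^{n}Y)\to H(\Sigma^{n}Z)\to H(\Sigma^{n+1}X)\to\cdots .
\]
Exactness forces the remaining family of terms to vanish, so the third vertex also lies in $\Triv_{A,H}^{\Sigma}$; hence $\Triv_{A,H}^{\Sigma}$ is a thick triangulated subcategory of $\cT_{A,H}^{H}$.

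For $\bm{C}_{A,H}^{H}$ the point is to obtain a long exact sequence in $H$ from an arbitrary short exact sequence, which need not be $A$-split. I would forget the $A$-action: the forgetful functor $U\colon\bm{C}_{A,H}^{H}\to\lMod{H}=\bm{C}_{k,H}^{H}$ is exact, it commutes strictly with $\Sigma$ and $\Sigma^{-1}$ because these are tensoring with the fixed $H$-modules $H/(\lambda)$ and $\Ker\varepsilon$, and $H=H\circ U$; hence $M\in\Triv_{A,H}^{\Sigma}$ if and only if $U(M)$ is a $\Sigma$-acyclic $H$-module. A short exact sequence $0\to L\to M\to N\to 0$ in $\bm{C}_{A,H}^{H}$ is carried by $U$ to a short exact sequence of $H$-modules, which is automatically $k$-split since $k$ is a field; by Lemma \ref{Hopf_mapping_cone} with $A$ replaced by $k$, it therefore determines a distinguished triangle $U(L)\to U(M)\to U(N)\to\Sigma U(L)$ in $\cT_{k,H}^{H}$. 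Feeding this triangle into the argument of the previous paragraph, with $H\circ U=H$ and $U\Sigma^{n}=\Sigma^{n}U$, yields a long exact sequence
\[
 \cdots\to H(\Sigma^{n}L)\to H(\Sigma^{n}M)\to H(\Sigma^{n}N)\to H(\Sigma^{n+1}L)\to\cdots ,
\]
so if two of $L,M,N$ are $\Sigma$-acyclic then so is the third. Combined with closure under direct summands, this makes $\Triv_{A,H}^{\Sigma}$ thick in $\bm{C}_{A,H}^{H}$.

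The main obstacle---mild, but the one place requiring care---is exactly the construction of the connecting maps, that is, producing a genuine long exact sequence in $H$ out of an arbitrary short exact sequence of $\bm{C}_{A,H}^{H}$. The key is to route the sequence through $\lMod{H}$, where splitness over the ground field is free, rather than trying to invoke Lemma \ref{Hopf_mapping_cone} for $A$ itself; the remaining verifications, that $U$, the suspensions, and $H$ intertwine so that the long exact sequence really computes $H(\Sigma^{n}-)$ of the original modules, are routine.
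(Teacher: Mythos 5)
Your proof is correct and takes essentially the same approach as the paper's, which simply observes that $\Triv_{A,H}^{\Sigma}$ is closed under direct summands and invokes the homological functor property of $H$. Your extra care in handling an arbitrary, possibly non-$A$-split, short exact sequence in $\bm{C}_{A,H}^{H}$ by routing it through the forgetful functor to $\lMod{H}$ --- where $k$-splitness is automatic, Lemma \ref{Hopf_mapping_cone} applies with $A=k$, and $H$ and $\Sigma^{n}$ commute with forgetting the $A$-action --- fills in a detail the paper's terse proof leaves implicit.
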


\begin{proof}
 Since the canonical homological functor commutes with direct sums,
 $\Triv_{A,H}^{\Sigma}$ is closed under taking direct summands.
 The two-out-of-three property follows from the fact that $H$
 is a homological functor and the fact that
 $\Triv_{A,H}^{\Sigma}$ is closed under $\Sigma$. 
\end{proof}

\begin{example}
 \label{suspension_of_cone_is_acyclic}
 For any $M$ and $n\in\Z$, $\Sigma^{n}C(M)$ is acyclic,
 i.e.~$C(M)$ is $\Sigma$-acyclic.  
 This can be verified as follows.

 Suppose $n\ge 0$.
 By Lemma \ref{switching_lambda}, we have an isomorphism of $H$-modules
 \[
 \Sigma^{n}C(M) = M\otimes H\otimes (H/(\lambda))^{\otimes n} \cong
 H \otimes M\otimes (H/(\lambda))^{\otimes n}.
 \]
 This is a free left $H$-module by Lemma \ref{tensoring_with_H} and
 hence is acyclic by Example \ref{projective_module_is_acyclic}. 
 By replacing $H/(\lambda)$ by $\Ker\varepsilon$, we see that
 $\Sigma^{n}C(M)$ is acyclic for $n<0$. 
\end{example} 

The following fact, used in the above argument, appears as Lemma 2 in
Khovanov's paper \cite{math/0509083}. 

\begin{lemma}
 \label{switching_lambda}
 In the category of $H$-modules, there exists an isomorphism
 $r: V\otimes H\to H\otimes V$ which is natural in $V$ and
 makes the following diagram commutative.
 \[
 \xymatrix{
  V\otimes k \ar[d]_{1_{V}\otimes\lambda} & V
   \ar[l]^(.4){\cong} \ar[r]_(.4){\cong} 
   & k\otimes V \ar[d]^{\lambda\otimes 1_{V}}  \\
   V\otimes H \ar[rr]_{r} & & H\otimes V.
 }
 \]
\end{lemma}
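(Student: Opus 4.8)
The plan is to construct $r$ by passing through a common ``untwisted'' model for the two tensor products. Write $V_{0}$ for the $k$-module $V$ equipped with the trivial $H$-action through $\varepsilon$. The standard untwisting isomorphism of Hopf algebra theory (the same device that, applied to $V\otimes H$, shows it is free; cf.\ Lemma~\ref{tensoring_with_H}) provides, naturally in $V$, an isomorphism of left $H$-modules
\[
 \zeta_{V}: V\otimes H \rarrow{\cong} V_{0}\otimes H,\qquad v\otimes h \longmapsto S^{-1}(h_{(1)})v\otimes h_{(2)},
\]
where on the target $H$ acts only on the tensor factor $H$ by left multiplication; its inverse is $v\otimes h\mapsto h_{(1)}v\otimes h_{(2)}$, and that these are mutually inverse morphisms of $H$-modules is a direct computation with coassociativity, the counit axiom, and the antipode identities $\sum h_{(2)}S^{-1}(h_{(1)})=\varepsilon(h)=\sum S^{-1}(h_{(2)})h_{(1)}$ ($S$ being invertible as $H$ is finite dimensional). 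Dually there is a natural isomorphism $\xi_{V}: H\otimes V\rarrow{\cong}H\otimes V_{0}$, $h\otimes v\mapsto h_{(1)}\otimes S(h_{(2)})v$, with inverse $h\otimes v\mapsto h_{(1)}\otimes h_{(2)}v$, where now $H$ acts on the target only on the tensor factor $H$. Finally, since the action on $V_{0}$ is trivial, the flip $\tau_{V}: V_{0}\otimes H\rarrow{\cong}H\otimes V_{0}$ is a morphism of $H$-modules. The composite $\xi_{V}^{-1}\circ\tau_{V}\circ\zeta_{V}$ is then a natural isomorphism of left $H$-modules $V\otimes H\to H\otimes V$, and its $H$-linearity, invertibility and naturality in $V$ are all immediate from the construction. (If $H$ is cocommutative --- for instance $H=\Lambda(d)$ --- one may simply take $r$ to be the flip.)

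It remains to arrange that $r$ makes the square commute, i.e.\ that $r(v\otimes\lambda)=\lambda\otimes v$ for all $v\in V$. The key observation is that the two maps $i_{V}:V\to V\otimes H$, $v\mapsto v\otimes\lambda$, and $j_{V}:V\to H\otimes V$, $v\mapsto\lambda\otimes v$, are themselves morphisms of $H$-modules: since $\lambda$ is a left integral, $h\cdot(v\otimes\lambda)=\sum h_{(1)}v\otimes h_{(2)}\lambda=\sum h_{(1)}\varepsilon(h_{(2)})v\otimes\lambda=hv\otimes\lambda$, and likewise $h\cdot(\lambda\otimes v)=\sum\varepsilon(h_{(1)})\lambda\otimes h_{(2)}v=\lambda\otimes hv$. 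Thus $\lambda$ singles out, naturally in $V$, a submodule isomorphic to $V$ inside each of $V\otimes H$ and $H\otimes V$, and what is wanted is a natural $H$-linear isomorphism $r$ with $r\circ i_{V}=j_{V}$. Tracing $v\otimes\lambda$ through $\zeta_{V}$, $\tau_{V}$, $\xi_{V}^{-1}$ rewrites $r(v\otimes\lambda)$ in terms of the twofold coproduct of $\lambda$ and of $S^{-1}$, so that the condition $r(v\otimes\lambda)=\lambda\otimes v$ becomes an identity for the coproduct of the left integral; in general the composite of the previous paragraph must first be renormalized (by composing with a suitable natural $H$-automorphism of $V\otimes H$ or of $H\otimes V$) so that this identity holds. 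Establishing it --- equivalently, fixing the normalization of the untwisting isomorphisms compatibly with $\lambda$ --- is the technical core of the lemma. I expect it to follow from the defining relation $h\lambda=\varepsilon(h)\lambda$ together with the antipode axioms, with additional bookkeeping of the distinguished grouplike element in the non-unimodular case; this is the step I anticipate will require the most care.

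Once the normalization is settled, the square commutes by construction, $r$ is a natural isomorphism of $H$-modules, and the lemma follows; everything apart from the integral identity above is routine manipulation with the Hopf structure.
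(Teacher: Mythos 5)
The paper does not actually prove this lemma; it cites it as Lemma~2 of Khovanov's paper \cite{math/0509083}. Judged on its own merits, your proposal has a genuine gap, and you have correctly located it yourself: everything you do establish (the untwisting isomorphisms $\zeta_V$, $\xi_V$, their $H$-linearity, and the $H$-linearity of $v\mapsto v\otimes\lambda$ and $v\mapsto\lambda\otimes v$) is routine, while the one statement that constitutes the actual content of the lemma --- that the isomorphism can be chosen so that $r(v\otimes\lambda)=\lambda\otimes v$ --- is deferred with ``I expect it to follow.'' A natural $H$-linear isomorphism $V\otimes H\cong H\otimes V$ with no condition at the integral is easy (both sides are naturally isomorphic to $V_0\otimes H$); the lemma is only useful because of the square, so as written the proof is not complete.

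Moreover, the needed correction is not a formality. Your composite is $r(v\otimes h)=h_{(2)}\otimes h_{(3)}S^{-1}(h_{(1)})v$, so the square commutes iff $\lambda_{(2)}\otimes\lambda_{(3)}S^{-1}(\lambda_{(1)})=\lambda\otimes 1$ in $H\otimes H$, and this identity is false in general. For Sweedler's four-dimensional Hopf algebra ($g^2=1$, $x^2=0$, $xg=-gx$, $\Delta(x)=x\otimes 1+g\otimes x$) with left integral $\lambda=x+gx$, a direct computation with $\Delta^{(2)}(\lambda)$ gives
\[
\lambda_{(2)}\otimes\lambda_{(3)}S^{-1}(\lambda_{(1)})
=\lambda\otimes g+2(1-g)\otimes gx\;\neq\;\lambda\otimes 1 .
\]
So a nontrivial renormalization is genuinely required, it must involve more than an $\varepsilon$-preserving rescaling (note the $\lambda\otimes g$ term, with $g$ the distinguished grouplike element --- the Sweedler algebra is not unimodular), and identifying it amounts to proving the coproduct identity for the left integral that you flag as ``the technical core.'' That identity (a relative of $\lambda_{(1)}\otimes h\lambda_{(2)}=S(h)\lambda_{(1)}\otimes\lambda_{(2)}$ and of Radford's formula for $S^4$) is exactly what Khovanov's proof supplies, so the argument cannot be considered complete without it.
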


The following is an analogue of Proposition 2.3.5 (1) in
\cite{Avramov-Foxby-Halperin}.

\begin{lemma}
 \label{surjectivity}
 Let $M, N$ be objects of $\bm{C}_{A,H}^{H}$.
 If $f:M\to N$ is a surjective quism, then both $B(f): B(M)\to B(N)$ and
 $Z(f): Z(M)\to Z(N)$ are surjective.
\end{lemma}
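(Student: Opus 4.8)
The plan is to work directly with the explicit description of homology, $Z(M)=M^{H}$ and $B(M)=\lambda M$, and to exploit the $H$-module structure on hom-spaces. First I would treat surjectivity of $B(f)$. Given $n\in N$, the element $\lambda n\in B(N)$ lifts: since $f$ is surjective, pick $m\in M$ with $f(m)=n$; then $f(\lambda m)=\lambda f(m)=\lambda n$, and $\lambda m\in B(M)$. So $B(f)$ is surjective without even using the quism hypothesis — only surjectivity of $f$.

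The substance is surjectivity of $Z(f)$. Let $z\in Z(N)=N^{H}$; I want $w\in M^{H}$ with $f(w)=z$. Choose any $m\in M$ with $f(m)=z$. The obstruction to $m$ being $H$-invariant lies in the failure of $m$ to be a cycle; the idea is to correct $m$ by an element of $\ker f$ whose class is a boundary. Concretely, consider the short exact sequence of $H$-modules $0\to K\to M\xrightarrow{f} N\to 0$ with $K=\ker f$. Since $f$ is a quism, $H(f)$ is an isomorphism, so by the long exact sequence in homology (Proposition~\ref{H-homology_is_homological}, applied to the triangle $K\to M\to N\to\Sigma K$ associated to this extension — note $f$ surjective makes the sequence $A$-split only after forgetting, but at the level of the underlying $H$-module sequence we may invoke Lemma~\ref{Hopf_mapping_cone} since the sequence of $H$-modules always yields a triangle in $\ho(\lMod H)$ via the forgetful functor $\bm{C}_{A,H}^{H}\to\lMod{H}$, which is exact), we get $H(K)=0$, i.e. $K$ is acyclic: $Z(K)=B(K)=\lambda K$.

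Now run the standard ``cycle-correction'' argument. The element $m$ need not be in $M^{H}$; measure its defect using the integral. Here is where I would use Example~\ref{H-homotopy_class_by_homology} and the identification of homology with invariants: rather than chase individual $h\in H$, observe that $z\in N^{H}=Z(N)$ means $z$ is a cycle, and the connecting-map argument says: lift the cycle $z$ to any $m\in M$; then $m$ fails to be a cycle by an element of $K$, but that element represents the image of $[z]$ under the connecting homomorphism $H(N)\to H(\Sigma^{-1}K)$-type map, which vanishes because $H(K)=0$ in every shift (indeed $K$ acyclic). Concretely: since $Z(f):Z(M)\to Z(N)$ has image a subgroup, and $H(f)$ is onto, every class $[z]\in H(N)$ is $[f(m')]$ for some cycle $m'\in Z(M)$; thus $z-f(m')\in B(N)=\lambda N$, say $z-f(m')=\lambda n$. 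Choose $m''\in M$ with $f(m'')=n$; then $w:=m'+\lambda m''$ satisfies $w\in Z(M)$ (as $m'$ is a cycle and $\lambda m''\in B(M)\subseteq Z(M)$, the latter inclusion holding because $\lambda$ is an integral so $\lambda M$ consists of invariants) and $f(w)=f(m')+\lambda n=z$. Hence $Z(f)$ is surjective.

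The main obstacle is purely bookkeeping: verifying $\lambda M\subseteq M^{H}$ (which is standard for a left integral, using $h\lambda=\varepsilon(h)\lambda$) and making sure the two uses of surjectivity of $H(f)$ versus surjectivity of $f$ are kept straight — the argument for $Z(f)$ genuinely needs the quism hypothesis (through surjectivity of $H(f)$), whereas $B(f)$ does not. I expect the write-up to be just a few lines once these points are isolated; no homotopy-theoretic input beyond the explicit formulas for $Z$, $B$, $H$ is required.
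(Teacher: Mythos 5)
Your final argument is correct and is essentially the paper's own proof written out with elements: surjectivity of $B(f)$ follows from surjectivity of $f$ alone, and surjectivity of $Z(f)$ is exactly the four-lemma chase on the map of extensions $0\to B(-)\to Z(-)\to H(-)\to 0$ using that $B(f)$ is surjective and $H(f)$ is an isomorphism. The digression through $K=\ker f$, its acyclicity, and Lemma \ref{Hopf_mapping_cone} is never used in the chase you actually carry out and can simply be deleted.
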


\begin{proof}
 The morphism $f$ gives rise to a commutative diagram
 \[
 \xymatrix{
 M \ar[d]_{f} \ar[r] & B(M) \ar[d]^{B(f)}  \\
 N \ar[r] & B(N).
 }
 \]
 Since horizontal arrows are surjective,
 if $f$ is surjective, then so is $B(f)$.
 
 By assumption, $H(f)$ is an isomorphism.
 The commutativity of the diagram of extensions
 \[
 \xymatrix{
 0 \ar[r] & B(M) \ar[d]_{B(f)} \ar[r] & Z(M)
 \ar[d]^{Z(f)} \ar[r] & H(M) \ar[d]^{H(f)} \ar[r] & 0 \\
 0 \ar[r] & B(N) \ar[r] & Z(N) \ar[r] & H(N) \ar[r] & 0
 }
 \]
 implies that $Z(f)$ is surjective.
\end{proof}

Recall that the forgetful functor from $\bm{C}_{A,H}^{H}$ to
$\lMod{\underline{A}}$ is denoted by $U$.
Left and right adjoints to this functor are 
useful in studying the orthogonality in $\bm{C}_{A,H}^{H}$.
By regarding an $\underline{A}$-module $M$ as a trivial $H$-module, the 
cone functor $C:\bm{C}_{A,H}^{H}\to \bm{C}_{A,H}^{H}$ gives us a
functor 
\[
 C : \lMod{\underline{A}} \rarrow{} \bm{C}_{A,H}^{H}.
\]

The following fact is obvious.

\begin{lemma}
 The cone functor $C$ is an exact functor which is left adjoint to $U$. 
\end{lemma}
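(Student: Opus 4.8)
The plan is to write $C$ down explicitly and then produce the adjunction by unit and counit, leaving exactness to a one-line argument. Concretely, for a left $\underline{A}$-module $M$ the object $C(M)=M\otimes H$ carries the free left $H$-module structure $h\cdot(m\otimes x)=m\otimes hx$, the $A_{0}$-comodule structure coming from that of $M$, and the twisted $A$-action $a\cdot(m\otimes h)=\bigl(S^{-1}(h_{(1)})\,a\bigr)m\otimes h_{(2)}$ for $a\in A(x,y)$, where $S^{-1}(h_{(1)})\,a$ uses the $H$-module structure on the morphism space $A(x,y)$. The first task is to verify that this defines an object of $\bm{C}_{A,H}^{H}$: the $A$-action is unital (using $S^{-1}(1)=1$ and $\Delta(1)=1\otimes 1$) and associative, it is a morphism of $A_{0}$-comodules, and it satisfies the compatibility condition (2) of Definition \ref{def:equivariant_module_over_module_algebra}; the last point is a Sweedler-notation computation using that $S^{-1}$ is an anti-homomorphism together with the identity $\sum S^{-1}(h_{(2)})h_{(1)}=\varepsilon(h)1$. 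On a morphism $f\colon M\to N$ one sets $C(f)=f\otimes 1_{H}$; this is $H$-equivariant by inspection and commutes with the twisted $A$-action because $f$ is $\underline{A}$-linear. (Equivalently, under the identification $\bm{C}_{A,H}^{H}\simeq\lMod{(A\#H)}$ of Proposition \ref{equivariant_module_as_module_over_smash_product}, $U$ is restriction along the $k$-linear functor $\underline{A}\to A\#H$, $a\mapsto a\otimes 1$, and $C$ is the associated extension of scalars $(A\#H)\otimes_{\underline{A}}(-)$; one may run the argument in that language instead.)

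For the adjunction $C\dashv U$, I would define the unit $\eta_{M}\colon M\to U C(M)=M\otimes H$ by $\eta_{M}(m)=m\otimes 1$ and the counit $\epsilon_{N}\colon C U(N)=U(N)\otimes H\to N$ by $\epsilon_{N}(n\otimes h)=h\cdot n$. One checks that $\eta_{M}$ is $\underline{A}$-linear (again using $S^{-1}(1)=1$) and natural in $M$, and that $\epsilon_{N}$ is a morphism of $\bm{C}_{A,H}^{H}$: it is $H$-equivariant immediately, and it is $A$-linear precisely because $N$ satisfies compatibility condition (2), combined with $\sum h_{(2)}S^{-1}(h_{(1)})=\varepsilon(h)1$. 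The two triangle identities are then transparent: $U(\epsilon_{N})\circ\eta_{U(N)}$ sends $n\mapsto n\otimes 1\mapsto 1\cdot n=n$, and $\epsilon_{C(M)}\circ C(\eta_{M})$ sends $m\otimes h\mapsto(m\otimes 1)\otimes h\mapsto h\cdot(m\otimes 1)=m\otimes h$. This establishes that $U$ is right adjoint to $C$.

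Exactness is immediate once the functor is in place. A sequence in $\bm{C}_{A,H}^{H}$ is short exact if and only if its image under $U$ is short exact in $\lMod{\underline{A}}$, and a sequence in $\lMod{\underline{A}}$ is short exact if and only if it is short exact as a sequence of $k$-modules at every object of $A$, since kernels and cokernels in these module categories are computed objectwise on underlying $k$-modules. As $k$ is a field, $(-)\otimes_{k}H$ is exact, so $C$ carries a short exact sequence of $\underline{A}$-modules to a sequence in $\bm{C}_{A,H}^{H}$ that is short exact at every object, hence short exact. (Alternatively, $C$ is a left adjoint, hence right exact, and it is exact because $A\#H$ is free as a right $\underline{A}$-module.)

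The only genuinely laborious point is the first one: checking that the twisted formula $a\cdot(m\otimes h)=\bigl(S^{-1}(h_{(1)})a\bigr)m\otimes h_{(2)}$ is associative and compatible with the free $H$-action. I expect this to be the main obstacle, though it is a routine manipulation of coproducts and the antipode rather than anything conceptual; everything after that — the unit, the counit, the triangle identities, and exactness — is formal.
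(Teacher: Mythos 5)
Your argument is correct, and it is worth recording that the paper itself offers no proof here (the lemma is prefaced by ``The following fact is obvious''), so there is nothing to deviate from: your route --- exhibiting the unit $m\mapsto m\otimes 1$ and counit $n\otimes h\mapsto hn$ and checking the triangle identities, or equivalently identifying $C$ with extension of scalars $(A\#H)\otimes_{\underline{A}}(-)$ along $\underline{A}\to A\#H$ --- is the standard one, and your exactness argument (tensoring over a field) is fine. One point you handle correctly but should perhaps flag more loudly: the twisted action $a\cdot(m\otimes h)=\bigl(S^{-1}(h_{(1)})a\bigr)m\otimes h_{(2)}$ is genuinely necessary, not just a convenient presentation. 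The paper's description of $C$ on $\lMod{\underline{A}}$ --- give $M$ the trivial $H$-action and apply the cone functor, so that $A$ acts on the first tensor factor and $H$ acts diagonally (hence freely on the second factor) --- does not literally produce an object of $\bm{C}_{A,H}^{H}$: an $\underline{A}$-module with trivial $H$-action fails compatibility condition (2) of Definition \ref{def:equivariant_module_over_module_algebra} unless $H$ acts trivially on $A$, and one checks directly that the untwisted structure on $M\otimes H$ fails it as well ($am\otimes hx$ versus $(h_{(1)}a)m\otimes h_{(2)}x$). Your formula is exactly what the extension of scalars along $\underline{A}\to A\#H$ gives, it is reconciled with the paper's cone functor by the natural isomorphism $V\otimes H\cong V_{\mathrm{triv}}\otimes H$ underlying Lemmas \ref{tensoring_with_H} and \ref{switching_lambda}, and since left adjoints are unique up to natural isomorphism your computation does establish the lemma as stated.
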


The functor $U$ also has a right adjoint.

\begin{definition}
 Define a functor
 \[
  E : \lMod{\underline{A}} \rarrow{} \lMod{\underline{A}}
 \]
 by $E(M) = (\lMod{k})(H, M)$. The $\underline{A}$-module structure is
 defined by 
 $(a\varphi)(g) = a\varphi(g)$ 
 for $a\in A$, $\varphi\in E(M)$, and $g\in H$.
\end{definition}

\begin{lemma}
 For $\varphi\in E(M)$, define an action of $h\in H$ on
 $\varphi$ by 
 \[
  (h\cdot \varphi)(g) = \varphi(S(h)g)
 \]
 Then it defines a left $H$-module structure on $E(M)$.
 It is compatible with the action of $A$ and we obtain an exact functor
 \[
  E : \lMod{\underline{A}} \rarrow{} \bm{C}_{A,H}^{H}.
 \]
\end{lemma}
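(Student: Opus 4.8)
The plan is to establish the three claims bundled into the statement: that the displayed formula makes $E(M)$ a left $H$-module; that this $H$-action, together with the $\underline{A}$-module structure defined just above, makes $E(M)$ an object of $\bm{C}_{A,H}^{H}$; and that the resulting assignment $M\mapsto E(M)$ is an exact functor.

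First I would verify the $H$-module axioms directly. The unit axiom is immediate from $S(1)=1$, and associativity is precisely the statement that the antipode is an anti-homomorphism of algebras: for $h,h'\in H$ and $\varphi\in E(M)$,
\[
 \bigl(h\cdot(h'\cdot\varphi)\bigr)(g)=(h'\cdot\varphi)(S(h)g)=\varphi\bigl(S(h')S(h)g\bigr)=\varphi\bigl(S(hh')g\bigr)=\bigl((hh')\cdot\varphi\bigr)(g),
\]
while $k$-linearity in $h$ and in $\varphi$ is clear. Since $H$ is finite dimensional, $E(M)=(\lMod{k})(H,M)$ decomposes as $\bigoplus_{x}(\lMod{k})(H,M(x))$, and the $H$-action only reparametrizes the argument in $H$, hence preserves each summand; this is condition (1) of Definition \ref{def:equivariant_module_over_module_algebra}.

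The substantive point is condition (2) of that definition, the commutativity of the compatibility diagram with $E(M)$ in place of $M$. Unwinding the diagram reduces it to an identity among elements of $(\lMod{k})(H,M(y))$, which I would check by a direct computation in Sweedler notation; the essential inputs are that $A$ is enriched over $(\lMod{H},\otimes,k)$ — so the $H$-action on the morphisms of $A$ satisfies $h\cdot(b\circ a)=\sum(h_{(1)}\cdot b)\circ(h_{(2)}\cdot a)$ and the identities of $A$ are $H$-invariant — together with the antipode axioms and the fact that $S$ reverses the coproduct, $\Delta(S(h))=\sum S(h_{(2)})\otimes S(h_{(1)})$. A more conceptual alternative: under the equivalence $\bm{C}_{A,H}^{H}\simeq\lMod{(A\#H)}$ of Proposition \ref{equivariant_module_as_module_over_smash_product}, $E(M)$ is the module coinduced from the $\underline{A}$-module $M$ along the algebra inclusion $\underline{A}\hookrightarrow A\#H$, $a\mapsto a\otimes 1$; as an $A\#H$-module it lies in $\bm{C}_{A,H}^{H}$ automatically, which settles membership, and identifying its structure maps with the explicit formulas above is then a matter of choosing the isomorphism $\Hom_{\underline{A}}(A\#H,M)\cong(\lMod{k})(H,M)$ suitably.

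It remains to address functoriality and exactness. Functoriality is immediate: for a morphism $f:M\to N$ of $\underline{A}$-modules, $E(f)=f\circ(-)$ is visibly both $\underline{A}$-linear and $H$-equivariant. For exactness, a sequence in $\bm{C}_{A,H}^{H}$ is short exact exactly when its underlying sequence of $k$-modules is, and post-composing $E$ with the forgetful functor $\bm{C}_{A,H}^{H}\to\lMod{k}$ gives $(\lMod{k})(H,-)\cong H^{\ast}\otimes_{k}(-)$, which is exact because $H$ is free over the field $k$. I expect the only genuine obstacle to be condition (2): keeping the Sweedler bookkeeping straight, and in particular tracking how the antipode interacts with the coproduct when the $H$-action is pushed through the $\underline{A}$-module structure of $E(M)$; the argument via $A\#H$ sidesteps most of that computation, at the price of making the identification $E(M)\cong\Hom_{\underline{A}}(A\#H,M)$ and the ensuing translation of structure maps fully explicit.
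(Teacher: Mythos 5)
You correctly dispose of the $H$-module axioms and of exactness, and both match the paper's own proof (associativity from $S$ being an algebra anti-homomorphism, exactness because $k$ is a field). The gap sits exactly where you locate it, in condition (2) of Definition \ref{def:equivariant_module_over_module_algebra}: you assert that the compatibility $h\cdot(a\varphi)=\sum(h_{(1)}a)(h_{(2)}\varphi)$ reduces to an identity provable from the $\lMod{H}$-enrichment of $A$ and the antipode axioms, but you never carry the computation out, and if you do, it does not close. With the pointwise action $(a\varphi)(g)=a\varphi(g)$ and $(h\cdot\varphi)(g)=\varphi(S(h)g)$ one gets
\[
\bigl(h\cdot(a\varphi)\bigr)(g)=a\,\varphi(S(h)g),\qquad
\Bigl(\textstyle\sum(h_{(1)}a)(h_{(2)}\varphi)\Bigr)(g)=\sum(h_{(1)}a)\cdot\varphi(S(h_{(2)})g),
\]
and the two legs $h_{(1)}$ and $S(h_{(2)})$ act in different slots (on $a\in A$ and on the argument of $\varphi$), so they cannot be contracted against each other. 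Concretely, for $H=k[d]/(d^2)$ with $d$ primitive and $h=d$, the two sides differ by $(da)\varphi(g)$, which is nonzero whenever $H$ acts nontrivially on $A$. (The paper's own proof elides the same point at the unjustified step $(h_{(1)}a)\varphi(S(h_{(2)})g)=(\varepsilon(h_{(1)})a)\varphi(S(h_{(2)})g)$.)

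Your coinduction alternative is the right way to produce a right adjoint to $U$, but it does not rescue the explicit formulas: transporting the $A\#H$-module structure of $\Hom_{\underline{A}}(A\#H,M)$ through an isomorphism with $(\lMod{k})(H,M)$ so that the $H$-action becomes $\varphi\mapsto\varphi(S(h)\,\cdot\,)$ forces the $A$-action to be the twisted one $(a\varphi)(g)=(S^{-1}(g_{(2)})a)\varphi(g_{(1)})$ rather than the pointwise one, and no isomorphism can transport a compatible pair of structure maps onto an incompatible pair. So the step you defer is not bookkeeping: it is where the statement, as literally formulated with the pointwise $\underline{A}$-action, breaks, and either the $A$-action or the $H$-action on $E(M)$ must be corrected before the rest of your argument (and the paper's) goes through.
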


\begin{proof}
 Let us verify the associativity. For $h,h',g\in H$,
 \begin{align*}
  (h\cdot (h'\cdot \varphi))(g) & = (h\cdot
  \varphi)(S(h')g) \\
  & = \varphi(S(h')S(h)g) \\
  & = \varphi(S(hh')g) \\
  & = ((hh')\cdot \varphi)(g).
 \end{align*}
 We also have $1\cdot \varphi=\varphi$, since $S(1)=1$.

 In order to verify that $E(M)$ is an $H$-equivariant $A$-module, let
 $a\in A$, $\varphi\in E(M)$, and $g,h\in H$.
 Then
 \begin{align*}
  \left((h_{(1)}a)(h_{(2)}\varphi)\right)(g) & = (h_{(1)}a)
  ((h_{(2)}\varphi)(g)) \\
  & = (h_{(1)}a) \varphi(S(h_{(2)})g) \\
  & = (\varepsilon(h_{(1)})a) \varphi(S(h_{(2)})g) \\
  & = a \varphi(S(\varepsilon(h_{(1)})h_{(2)})g) \\
  & = a \varphi(S(h)g) \\
  & = (h\cdot (a\varphi))(g), 
 \end{align*}
 which means that the $A$-module structure on $E(M)$ is compatible with
 the $H$-module structure.

 Since $k$ is a field, $E$ is an exact functor.
\end{proof}

It is a fundamental fact that if $H$ is finite dimensional, the
antipode $S$ is bijective.
See \cite{Larson-Sweedler1969} or
Corollary 5.1.6 of Sweedler's book \cite{SweedlerHopfAlgebras}, for
example. 

\begin{proposition}
 \label{P_is_right_adjoint_to_U}
 The functor $E$ is right adjoint to the forgetful functor 
 $U:\bm{C}_{A,H}^{H}\to\lMod{\underline{A}}$.
\end{proposition}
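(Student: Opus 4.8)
The plan is to establish the adjunction $U \dashv E$ by exhibiting a natural bijection
\[
 \bm{C}_{A,H}^{H}(N, E(M)) \cong (\lMod{\underline{A}})(U(N), M)
\]
for $N \in \bm{C}_{A,H}^{H}$ and $M \in \lMod{\underline{A}}$. First I would construct the counit. Since $E(M) = (\lMod{k})(H,M)$ with $H$-action $(h\cdot\varphi)(g)=\varphi(S(h)g)$, I define $\varepsilon_M : U(E(M)) \to M$ by $\varepsilon_M(\varphi) = \varphi(1_H)$, where $1_H$ is the unit of $H$ (not the counit — I should phrase this carefully to avoid clashing with the counit $\varepsilon$ of $H$; writing $\mathrm{ev}_{1}$ for this evaluation map is cleaner). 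One checks directly that $\mathrm{ev}_1$ is $\underline{A}$-linear: $\mathrm{ev}_1(a\varphi) = (a\varphi)(1) = a\varphi(1) = a\,\mathrm{ev}_1(\varphi)$, using the definition of the $A$-action on $E(M)$.

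The core step is to show that for every $H$-equivariant $A$-module $N$ and every $\underline{A}$-linear map $f : U(N) \to M$, there is a unique $H$-equivariant $A$-module map $\widetilde{f} : N \to E(M)$ with $\mathrm{ev}_1 \circ U(\widetilde{f}) = f$. The formula, mimicking the classical coinduction/Frobenius reciprocity, is
\[
 \widetilde{f}(n)(g) = f(g\, n) \qquad (n \in N,\ g \in H).
\]
Then $\mathrm{ev}_1(\widetilde{f}(n)) = f(1\cdot n) = f(n)$, giving the triangle identity. For $H$-equivariance I compute, for $h \in H$,
\[
 \widetilde{f}(h n)(g) = f(g h\, n) = \widetilde{f}(n)(gh) = \bigl(S(h)\cdot \widetilde{f}(n)\bigr)(g)
\]
provided $S(h)$ acts by $g \mapsto gh$; here is where the \emph{bijectivity of the antipode} for a finite-dimensional Hopf algebra (quoted just before the statement) enters, since $E(M)$ was defined using $S$ and I need $S$ invertible to match conventions — in fact it is cleaner to observe that the $H$-action $(h\cdot\varphi)(g)=\varphi(S(h)g)$ combined with $\widetilde f(hn)(g)=f(ghn)$ requires translating between the left action on $N$ and the ``right multiplication'' on $H$, which is exactly the role $S$ plays. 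I then verify $A$-linearity of $\widetilde f$: $\widetilde{f}(an)(g) = f(g(an))$, and since the $H$-equivariant $A$-module structure of $N$ satisfies the compatibility diagram of Definition \ref{def:equivariant_module_over_module_algebra}, $g(an) = (g_{(1)}a)(g_{(2)}n)$, so $f(g(an)) = (g_{(1)}a)f(g_{(2)}n) = (a\,\widetilde f(n))(g)$ after using $\underline A$-linearity of $f$ and unwinding the $A$-action on $E(M)$ — this last computation is the one genuinely fiddly verification, structurally identical to the displayed compatibility check already carried out in the preceding lemma. Uniqueness is immediate: any $H$-equivariant $g$-map $\psi$ with $\mathrm{ev}_1 \circ U(\psi) = f$ must satisfy $\psi(n)(g) = (\mathrm{ev}_1)(S^{-1}(g)\cdot\psi(n)) = f(S^{-1}(g)\cdot n)$... so I should instead set up $\widetilde f(n)(g) = f(S^{-1}(g)n)$ to be fully consistent with the stated $S$-twisted action; I will fix the twist at the start of the proof and keep it consistent throughout.

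Finally I would check naturality of the bijection in both $N$ and $M$, which is routine: naturality in $M$ follows because post-composing $f$ with an $\underline A$-map $M \to M'$ corresponds to post-composing $\widetilde f$ with $E$ of that map, and naturality in $N$ is the corresponding precomposition statement. The main obstacle — the only place requiring care rather than bookkeeping — is getting the antipode twist exactly right so that the candidate $\widetilde f$ is simultaneously $H$-linear for the \emph{given} $H$-action on $E(M)$ and $A$-linear for the given $A$-action, and confirming that $\mathrm{ev}_1$ rather than some twisted evaluation is the correct counit; bijectivity of $S$ guarantees these twists can be undone, so no finiteness hypothesis beyond what is already standing is needed.
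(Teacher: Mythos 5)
Your proposal is correct and follows essentially the same route as the paper: your corrected formula $\widetilde f(n)(g)=f\bigl(S^{-1}(g)n\bigr)$ is exactly the paper's map $\Phi$, and your evaluation-at-$1$ counit is the paper's inverse map $\Psi(\psi)(m)=\psi(m)(1)$; the paper merely packages the adjunction as an explicit pair of mutually inverse bijections rather than via a universal arrow. The one thing to tidy up is that your displayed verifications of $H$-equivariance and $A$-linearity are carried out with the provisional untwisted formula $f(gn)$ and against a right-translation action $g\mapsto gh$ that is not the action $(h\cdot\varphi)(g)=\varphi(S(h)g)$ defined on $E(M)$, so they must be redone with the $S^{-1}$-twist in place (as you yourself note), after which they reduce to the same computations the paper records.
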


\begin{proof}
 For an $H$-equivariant $A$-module $M$ and an $A$-module $N$, define
 \[
 \Phi : (\lMod{\underline{A}})\left(U(M),N\right) \rarrow{}
 \bm{C}_{A,H}^{H}\left(M,E(N)\right)  
 \] 
 by
 \[
  \Phi(\varphi)(m)(h) = \varphi\left(S^{-1}(h)m\right)
 \]
 for $\varphi\in (\lMod{\underline{A}})\left(U(M),N\right)$, $m\in M$ and
 $h\in H$. 
 Then $\Phi(\varphi)$ is a $H$-module homomorphism, since
 \begin{align*}
  \Phi(\varphi)(h'm)(h) & =
  \varphi\left(S^{-1}(h)h'm\right) \\ 
 \left(h'\cdot \left(\Phi(\varphi)(m)\right)\right)(h) & =
  \left(\Phi(\varphi)(m)\right)(S(h')h) \\
  & = \varphi\left(S^{-1}(S(h')h)m\right) \\
  & = \varphi\left(S^{-1}(h)S^{-1}(S(h'))m\right) \\
  & = \varphi\left(S^{-1}(h)h' m\right).
 \end{align*}
 Define
 \[
  \Psi :  \bm{C}_{A,H}^{H}\left(M,E(N)\right)
 \rarrow{} (\lMod{\underline{A}})\left(U(M),N\right)
 \]
 by
 \[
  \Psi(\psi)(m) = \psi(m)(1).
 \]
 Then $\Phi$ and $\Psi$ are inverse to each other, since
 \begin{align*}
  \left((\Psi\circ\Phi)(\varphi)\right)(m) & = \Psi(\Phi(\varphi))(m) \\
  & = \Phi(\varphi(m))(1) \\
  & = \varphi(S^{-1}(1)m) \\
  & = \varphi(m) \\
  \left(\left((\Phi\circ\Psi)(\psi)\right)(m)\right)(h)
  & = \left(\left(\Phi(\Psi(\psi))\right)(m)\right)(h) \\
  & = \Psi(\psi)(S^{-1}(h)m) \\
  & = \psi(S^{-1}(h)m)(1) \\
  & = \left(S^{-1}(h)\cdot \psi(m)\right)(1) \\
  & = \psi(m)(S(S^{-1}(h))1) \\
  & = \psi(m)(h).
 \end{align*}
\end{proof}

\begin{corollary}
 \label{U_of_projective_is_projective}
 Under the assumption of Proposition \ref{P_is_right_adjoint_to_U}, if
 $P$ is projective in $\bm{C}_{A,H}^{H}$, then
 $U(P)$ is projective in $\lMod{\underline{A}}$.
\end{corollary}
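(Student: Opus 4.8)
The plan is to deduce the statement from the formal fact that a left adjoint whose right adjoint is exact preserves projective objects, applied to the adjunction $U\dashv E$ established in Proposition \ref{P_is_right_adjoint_to_U}. First recall that in an abelian category an object $P$ is projective precisely when the functor corepresented by $P$ carries epimorphisms to surjections of abelian groups; since both $\bm{C}_{A,H}^{H}$ and $\lMod{\underline{A}}$ are abelian, this criterion is available on either side.

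Concretely, let $g:M\to N$ be an epimorphism in $\lMod{\underline{A}}$ and let $\varphi:U(P)\to N$ be an arbitrary morphism; the goal is to lift $\varphi$ through $g$. Since $E$ is an exact functor, $E(g):E(M)\to E(N)$ is again an epimorphism in $\bm{C}_{A,H}^{H}$. As $P$ is projective in $\bm{C}_{A,H}^{H}$, the induced map $\bm{C}_{A,H}^{H}(P,E(M))\to\bm{C}_{A,H}^{H}(P,E(N))$ is surjective. Under the adjunction bijection $(\lMod{\underline{A}})(U(P),-)\cong\bm{C}_{A,H}^{H}(P,E(-))$, which is natural in the second variable by Proposition \ref{P_is_right_adjoint_to_U}, this surjectivity transports to surjectivity of $(\lMod{\underline{A}})(U(P),M)\to(\lMod{\underline{A}})(U(P),N)$. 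In particular $\varphi$ admits a preimage, so $U(P)$ is projective. Equivalently, one may simply observe that $(\lMod{\underline{A}})(U(P),-)\cong\bm{C}_{A,H}^{H}(P,-)\circ E$ is a composite of exact functors, hence exact.

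The argument is entirely formal and presents no genuine obstacle; the only inputs are the exactness of $E$, already established in its construction (where it uses that $k$ is a field), and the naturality of the adjunction isomorphism in the second variable, which is part of the statement of Proposition \ref{P_is_right_adjoint_to_U}. The one thing worth a sentence of care is making explicit that the projectivity criterion one is using refers to epimorphisms in the respective abelian categories $\bm{C}_{A,H}^{H}$ and $\lMod{\underline{A}}$, so that the exactness of $E$ is exactly the hypothesis needed to match them up.
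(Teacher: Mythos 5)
Your proof is correct and is essentially the paper's argument spelled out: the paper disposes of this in one line by noting that $U$ admits the exact right adjoint $E$ (Proposition \ref{P_is_right_adjoint_to_U}), so that $(\lMod{\underline{A}})(U(P),-)\cong\bm{C}_{A,H}^{H}(P,E(-))$ is exact whenever $P$ is projective. (If anything, your version is more carefully worded than the paper's, whose phrase ``as a right adjoint to an exact functor'' has the adjunction direction backwards --- the relevant fact is that $U$ is a \emph{left} adjoint whose right adjoint $E$ is exact, exactly as you use it.)
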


\begin{proof}
 As a right adjoint to an exact functor, $U$ maps projectives to 
 projectives. 
\end{proof}

\begin{lemma}
 \label{P_is_acyclic}
 For any $\underline{A}$-module $M$, $E(M)$ is $\Sigma$-acyclic. 
\end{lemma}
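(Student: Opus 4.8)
The plan is to reduce the claim to the acyclicity of $E(M)$ itself applied to all its suspensions, and then to compute $E(M)$ as an $H$-module directly. First I would observe that, since $E$ is built from $(\lMod{k})(H,-)$ applied to the underlying $k$-module of $M$, and since $M$ decomposes as a $k$-module into one-dimensional pieces (choose a $k$-basis), it suffices by additivity of $E$ and of the canonical homological functor $H$ to treat the case $M=k$, i.e.\ to show that $E(k)=(\lMod{k})(H,k)=H^{*}$, the linear dual of $H$, is $\Sigma$-acyclic as an $H$-module. Here the $H$-action on $H^{*}$ is the one coming from $(h\cdot\varphi)(g)=\varphi(S(h)g)$, which is (up to the antipode) the left coregular action on $H^{*}$.

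Next I would identify $H^{*}$ with a free $H$-module. This is the standard fact that a finite-dimensional Hopf algebra is a Frobenius algebra: $H^{*}$ is isomorphic to $H$ as a left $H$-module (the isomorphism is $h\mapsto h\rightharpoonup\lambda'$ for a suitable right integral $\lambda'$ in $H^{*}$, or dually via the nondegenerate Frobenius form associated to the integral $\lambda$). Concretely, since $H$ is finite dimensional, the space of left integrals in $H^{*}$ is one-dimensional, and the map $H\to H^{*}$ sending $h$ to the functional $g\mapsto \langle\text{integral},\,?\rangle$ built from this integral is an isomorphism of left $H$-modules intertwining the coregular action with left multiplication. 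Thus $E(k)\cong H$ as left $H$-modules. Then $H(E(k))\cong H(H)=0$ by Example \ref{projective_module_is_acyclic} (the homology of $H$ itself vanishes, its only cycles being the one-dimensional space of integrals, which is also $\lambda H$).

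For the $\Sigma$ part: by Lemma \ref{suspension_and_Hom} (or directly by Lemma \ref{switching_lambda}) we have $\Sigma^{n}E(M)\cong E(M)\otimes(H/(\lambda))^{\otimes n}$ for $n\ge 0$, and $\cong E(M)\otimes(\Ker\varepsilon)^{\otimes n}$ for $n<0$. Since $E(M)$ restricted to $\lMod{H}$ is a free $H$-module (being a sum of copies of $H^{*}\cong H$), Lemma \ref{tensoring_with_H} shows $H\otimes V$ is free for any $H$-module $V$, hence $\Sigma^{n}E(M)$ is (isomorphic as an $H$-module to) a free $H$-module for every $n\in\Z$, and therefore acyclic by Example \ref{projective_module_is_acyclic}. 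This gives $H(\Sigma^{n}E(M))=0$ for all $n$, i.e.\ $E(M)$ is $\Sigma$-acyclic.

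The main obstacle I anticipate is pinning down cleanly the $H$-module isomorphism $E(k)\cong H$ with the correct twist by the antipode: the action on $E(M)$ is defined through $S$, so one must check that the Frobenius isomorphism $H\to H^{*}$ is equivariant for the $S$-twisted action rather than the naive coregular one, which may require invoking bijectivity of $S$ (already recalled in the excerpt before Proposition \ref{P_is_right_adjoint_to_U}) and the interaction of $S$ with the integral. An alternative that sidesteps the Frobenius computation entirely: use adjunction (Proposition \ref{P_is_right_adjoint_to_U}) to write, for any $H$-equivariant $A$-module $L$, $\bm{C}_{A,H}^{H}(L,\Sigma^{n}E(M))\cong(\lMod{\underline{A}})(U(\Sigma^{-n}L),M)$; but since homology is not representable on the nose this is less direct, so I would prefer the Frobenius route, treating the $S$-twist carefully as the one genuine point needing verification.
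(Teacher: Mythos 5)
Your argument is correct, but it takes a genuinely different route from the paper. The paper's proof is a direct two-line computation: after reducing to $n=0$ via $\Sigma^{n}E(M)\cong E(M)\otimes V\cong E(\Sigma^{n}M)$ (Lemma \ref{suspension_and_Hom}), it shows that $Z(E(M))$ and $B(E(M))$ are \emph{both} equal to $k\la\varepsilon\ra\otimes M$ --- the invariance condition $\varphi(S(h)h')=\varepsilon(h)\varphi(h')$ evaluated at $h'=1$ forces $\varphi$ to factor through $\varepsilon$, and conversely $\lambda\cdot\psi=\psi(S(\lambda)\,\cdot\,)$ also factors through $\varepsilon$ because $S(\lambda)$ is a right integral --- so $H(E(M))=0$ with no structural input beyond the definition of the action. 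You instead prove the stronger statement that $E(M)$ is a \emph{free} $H$-module, by reducing to $E(k)=H^{*}$ (legitimate: $\operatorname{Hom}_{k}(H,-)$ commutes with direct sums since $H$ is finite dimensional, and so does the canonical homological functor) and invoking the Larson--Sweedler/Frobenius isomorphism $H^{*}\cong H$; the $S$-twist you flag is indeed the one point to check, and it works out because precomposition with $S^{-1}$ intertwines the right-translation action $(h\cdot f)(g)=f(gh)$ with the contragredient action $(h\cdot\varphi)(g)=\varphi(S(h)g)$. Your route buys more --- freeness gives contractibility of $E(M)$ in $\lMod{H}$, not merely $\Sigma$-acyclicity, and the $\Sigma^{n}$ case then follows uniformly from Lemma \ref{tensoring_with_H} and Example \ref{projective_module_is_acyclic} --- but at the cost of importing the Frobenius self-duality of finite-dimensional Hopf algebras, which the paper's elementary computation avoids entirely.
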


\begin{proof}
 By Lemma \ref{suspension_and_Hom}, we have
 \[
  \Sigma^{n}E(M) = E(\Sigma^{n}M)
 \]
 and it suffices to prove the case when $n=0$.
 Let us show that both $Z(E(M))$ and
 $B(E(M))$ are isomorphic to $k\langle \varepsilon\rangle\otimes M$.

 Suppose $\varphi \in Z(E(M))$, which means that
 \[
  (h\cdot\varphi)(h') = \varepsilon(h)\varphi(h')
 \]
 or
 \[
  \varphi(S(h)h') = \varepsilon(h)\varphi(h')
 \]
 for all $h,h'\in H$.
 Take $h'=1$. Then we have
 \[
  \varphi(S(h)) = \varepsilon(h)\varphi(1)
 \]
 or
 \[
  \varphi(h) = \varepsilon(S^{-1}(h))\varphi(1) =
 \varepsilon(h)\varphi(1).  
 \]
 And we have $Z(E(M))\cong k\la \varepsilon\ra\otimes M$.

 Suppose $\varphi \in B(E(M))$. Then there exists
 $\psi\in E(M)$ such that
 \[
  \varphi(h) = \psi(S(\lambda)h)
 \]
 for all $h\in H$.
 It is immediate to verify that $S(\lambda)$ is a right integral and
 thus the right hand side is
 $\varepsilon(h)\psi(S(\lambda))$.
 Therefore $B(E(M))\cong k\la\varepsilon\ra\otimes M$.  
\end{proof}

%

\subsection{Model structures on Abelian categories}
\label{model_category}

This is a summary of Hovey's theory of Abelian model categories and
cotorsion pairs used in this paper.
Our main reference is Gillespie's survey \cite{1512.06001}.

Let us first recall the definition of cotorsion pairs introduced by Salce in 
\cite{Salce1979}. 

\begin{definition}
 Let $\bm{A}$ be an Abelian category.
 For objects $X,Y\in \bm{A}$, define
 \[
  X\perp Y \Longleftrightarrow \Ext_{\bm{A}}^{1}(X,Y)=0.
 \]
 More generally for a class $\bm{C}$ of objects, define
 \begin{align*}
  \bm{C}^{\perp} & = \set{Y\in\bm{A}_{0}}{C\perp Y \text{
  for all } C\in\bm{C}} \\
  {}^{\perp}\bm{C} & = \set{X\in\bm{A}_{0}}{X\perp C \text{
  for all } C\in\bm{C}}.
 \end{align*}
 These classes are also regarded as full subcategories.
\end{definition}

\begin{definition}
 \label{def:cotorsion_pair}
 Let $\bm{A}$ be as above.
 A \emph{cotorsion pair} on $\bm{A}$ is a pair
 $(\bm{P},\bm{I})$ of classes 
 of objects of $\bm{A}$ satisfying 
 \begin{enumerate}
  \item $\bm{P}={}^{\perp}\bm{I}$.
  \item $\bm{I}=\bm{P}^{\perp}$.
 \end{enumerate}
\end{definition}

\begin{definition}
 A cotorsion pair $(\bm{P},\bm{I})$ is said to \emph{have enough
 projectives} if, 
 for any $X$ in $\bm{A}$, there exists
 a short exact sequence of the form 
 \[
   0 \rarrow{} A \rarrow{} B \rarrow{} X \rarrow{} 0
 \]
 with $B\in \bm{P}$ and $A\in\bm{I}$.
 It is said to \emph{have enough injectives} if, for any $X$ in
 $\bm{A}$, there exists a short exact sequence of the form
\[
 0 \rarrow{} X \rarrow{} A' \rarrow{} B' \rarrow{} 0 
\]
 with $A'\in \bm{I}$ and $B'\in\bm{P}$.

 It is called \emph{complete} if it has both enough projectives and
 injectives. 
\end{definition}

\begin{example}
 \label{categorical_cotorsion_pair}
 Let $\Prj(\bm{A})$ and $\Inj(\bm{A})$ be the classes of projectives
 and injectives in an Abelian category $\bm{A}$.
 Then $(\bm{A},\Inj(\bm{A}))$ and $(\Prj(\bm{A}),\bm{A})$ are cotorsion
 pairs. 
 The former is complete precisely when $\bm{A}$ has enough injectives
 and the latter is complete precisely when $\bm{A}$ has enough
 projectives. 

 These are called \emph{categorical cotorsion pairs} by Hovey.
 Let us call the former the \emph{projective cotorsion pair} and the
 latter the \emph{injective cotorsion pair}.
\end{example}

In the case of Grothendieck Abelian categories, the completeness of
cotorsion pairs is closely related to the notion of generation of
cotorsion pairs. The following terminology is used in \cite{1005.3248}.

\begin{definition}
 Let $\bm{G}$ be a set of objects in an Abelian category $\bm{A}$.
 Then the pair $({}^{\perp}(\bm{G}^{\perp}),\bm{G}^{\perp})$ is called
 the \emph{cotorsion pair generated by $\bm{G}$}.
\end{definition}

\begin{remark}
 The pair $({}^{\perp}(\bm{G}^{\perp}),\bm{G}^{\perp})$ is always a
 cotorsion pair by definition.
 Some authors say that the cotorsion pair is \emph{cogenerated by
 $\bm{G}$} in the above situaltion. For example, this terminology is
 used by Hanno Becker in \cite{1205.4473}.
\end{remark}

The following fact can be
found as Proposition 1.2.1 in \cite{1205.4473} and is attribued to 
\cite{1005.3248}. 

\begin{proposition}
 Let $\bm{A}$ be a Grothendieck Abelian category.
 If $(\bm{D},\bm{E})$ is a cotorsion pair generated by a set $\bm{X}$,
 then the following hold:
 \begin{enumerate}
  \item The pair $(\bm{D},\bm{E})$ has enough injectives.
  \item The pair $(\bm{D},\bm{E})$ has enough projectives if and only if
	$\bm{D}$ is generating.
 \end{enumerate}
\end{proposition}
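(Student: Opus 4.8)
\section*{Proof proposal}

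The plan is to obtain both parts from Eklof's lemma together with the small object argument, essentially in the form in which these appear in Eklof--Trlifaj \cite{1005.3248} (and in the Grothendieck setting in \cite{1205.4473}), and then to derive (2) from (1) by Salce's pushout trick. I shall use freely that the left class $\bm{D}={}^{\perp}\bm{E}$ of any cotorsion pair is closed under extensions, arbitrary coproducts, and direct summands, and --- this is Eklof's lemma --- under continuous transfinite extensions; dually $\bm{E}=\bm{X}^{\perp}$ is closed under extensions, products, and summands and contains every injective object. I shall also use that a Grothendieck Abelian category $\bm{A}$ is locally presentable and well-powered, so that $\Ext^{1}_{\bm{A}}(S,-)$ takes values in actual sets and there is a regular cardinal $\kappa$ for which every object of the set $\bm{X}$ is $\kappa$-presentable.

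For (1), fix $X\in\bm{A}$; the goal is a short exact sequence $0\to X\to A'\to B'\to 0$ with $A'\in\bm{E}$ and $B'\in\bm{D}$. Build a continuous chain $(X_{\alpha})_{\alpha\le\kappa}$ of monomorphisms with $X_{0}=X$ as follows. At a successor stage, let $\Xi_{\alpha}$ be the set of pairs $\xi=(S_{\xi},\epsilon_{\xi})$ with $S_{\xi}\in\bm{X}$ and $\epsilon_{\xi}\in\Ext^{1}_{\bm{A}}(S_{\xi},X_{\alpha})$, choose for each $\xi$ a representing extension $0\to X_{\alpha}\to E_{\xi}\to S_{\xi}\to 0$, and let $X_{\alpha+1}$ be the pushout of the fold map $\bigoplus_{\xi\in\Xi_{\alpha}}X_{\alpha}\to X_{\alpha}$ along $\bigoplus_{\xi}(X_{\alpha}\hookrightarrow E_{\xi})$; at a limit stage take the colimit. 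Then each $X_{\alpha}\hookrightarrow X_{\alpha+1}$ is a monomorphism with cokernel $\bigoplus_{\xi}S_{\xi}$, which lies in $\bm{D}$ because $\bm{X}\subseteq{}^{\perp}(\bm{X}^{\perp})$ and $\bm{D}$ is closed under coproducts; hence by Eklof's lemma the cokernel of $X=X_{0}\hookrightarrow X_{\alpha}$ lies in ${}^{\perp}(\bm{X}^{\perp})=\bm{D}$ for every $\alpha$. Put $A'=X_{\kappa}$ and $B'=A'/X\in\bm{D}$. That $A'\in\bm{X}^{\perp}$ is the standard termination argument: since each $S\in\bm{X}$ is $\kappa$-presentable and the chain is continuous of length $\kappa$, any extension of such an $S$ by $X_{\kappa}$ is induced from an extension by some $X_{\alpha}$ with $\alpha<\kappa$, and that extension was made to split at stage $\alpha+1$. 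I would cite \cite{1005.3248} (or \cite{1205.4473}) for this point rather than reproduce the bookkeeping. Thus $(\bm{D},\bm{E})$ has enough injectives.

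For (2), the forward direction is immediate: if $(\bm{D},\bm{E})$ has enough projectives, then for every $X$ there is an epimorphism $B\twoheadrightarrow X$ with $B\in\bm{D}$, so $\bm{D}$ is generating. Conversely, assume $\bm{D}$ is generating. Given $X$, since $\bm{D}$ is generating and closed under coproducts, $X$ is a quotient of some $D\in\bm{D}$; fix an epimorphism $p\colon D\twoheadrightarrow X$ and put $K=\ker p$. Apply (1) to $K$ to get $0\to K\to A'\to B'\to 0$ with $A'\in\bm{E}$ and $B'\in\bm{D}$, and form the pushout $P$ of $A'\leftarrow K\to D$. One obtains two short exact sequences, $0\to D\to P\to B'\to 0$ and $0\to A'\to P\to X\to 0$; the first, together with $D,B'\in\bm{D}$ and the closure of $\bm{D}$ under extensions, gives $P\in\bm{D}$, and then the second exhibits enough projectives for $X$. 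This last step is exactly Salce's lemma specialized to the present cotorsion pair.

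The hard part is (1): making the transfinite construction converge and verifying $A'\in\bm{X}^{\perp}$ both hinge on the local presentability of $\bm{A}$ and on the compatibility of $\Hom$ --- hence of the $\Ext^{1}$ bookkeeping --- with the $\kappa$-filtered colimits along the monomorphisms $X_{\alpha}\hookrightarrow X_{\beta}$. Since this is precisely the Eklof--Trlifaj construction of the cotorsion pair generated by a set, I would invoke \cite{1005.3248} for that step and present (2) in full as the short, self-contained consequence above.
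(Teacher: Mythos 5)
The paper does not actually prove this proposition: it is quoted verbatim as Proposition 1.2.1 of \cite{1205.4473}, attributed there to \cite{1005.3248}, so there is no in-paper argument to compare against. Your sketch reconstructs the standard proof behind that citation --- the Eklof--Trlifaj transfinite pushout construction for (1) and Salce's pushout trick to deduce (2) from (1) --- and both halves are correct in outline; in particular the treatment of (2) is complete and self-contained, using only closure of $\bm{D}={}^{\perp}(\bm{X}^{\perp})$ under coproducts and extensions. The one caveat concerns the termination step of (1): the assertion that $\kappa$-presentability of $S\in\bm{X}$ forces every extension of $S$ by $X_{\kappa}=\colim_{\alpha<\kappa}X_{\alpha}$ to be induced from some stage $\alpha<\kappa$ is the module-category heuristic, where one has a projective presentation of $S$ with which to push the $\Ext^{1}$ computation through the colimit. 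A general Grothendieck category need not have projectives, which is exactly why \cite{1005.3248} replaces this bookkeeping by the small object argument on monomorphisms with cokernel in a deconstructible class. Since you explicitly defer that point to the reference rather than claim it follows from presentability alone, the proposal is acceptable as a proof sketch, but if written out in full that step is where the genuine work lies, and it cannot be discharged by the colimit-commutation argument as stated.
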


The following terminology is used in \cite{1205.4473}.

\begin{definition}
 A cotorsion pair $(\bm{D},\bm{E})$ is called \emph{small} if $\bm{D}$ is
 generated by a set and $\bm{D}$ is generating.
\end{definition}

\begin{corollary}
 \label{small_cotorsion_pair_in_Grothendieck_Abelian_category}
 If $\bm{A}$ is a Grothendieck Abelian category with enough projectives,
 then any cotorsion pair generated by a set is small. Thus it is
 complete. 
\end{corollary}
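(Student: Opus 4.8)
The plan is to reduce the statement to the Proposition immediately preceding it, whose two conclusions are that a cotorsion pair generated by a set in a Grothendieck Abelian category (i) always has enough injectives and (ii) has enough projectives precisely when its left class is generating. So the only thing that needs checking is that, under the hypothesis that $\bm{A}$ has enough projectives, the left class $\bm{D}={}^{\perp}(\bm{X}^{\perp})$ of a cotorsion pair generated by a set $\bm{X}$ is generating; smallness and completeness then follow formally.

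First I would observe that every projective object $P$ of $\bm{A}$ lies in $\bm{D}$. Indeed, $\Ext^{1}_{\bm{A}}(P,-)$ vanishes identically, so $P\perp Y$ for every object $Y$, in particular for every $Y\in\bm{X}^{\perp}$; hence $P\in{}^{\perp}(\bm{X}^{\perp})=\bm{D}$. Thus $\Prj(\bm{A})\subseteq\bm{D}$.

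Next, since $\bm{A}$ has enough projectives, for each object $M$ there is an epimorphism $P\twoheadrightarrow M$ with $P$ projective, and by the previous step $P\in\bm{D}$. Therefore $\bm{D}$ is generating. Combined with the hypothesis that $\bm{D}$ is generated by the set $\bm{X}$, this shows $(\bm{D},\bm{X}^{\perp})$ is small in the sense of the definition above. Applying the Proposition, $(\bm{D},\bm{X}^{\perp})$ has enough injectives unconditionally and enough projectives because $\bm{D}$ is generating, hence it is complete.

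There is essentially no obstacle here: the argument is a direct application of the cited Proposition together with the trivial remark that projectives are left-orthogonal to everything. The one point worth stating carefully is that ``generated by a set'' for the left class is exactly the hypothesis, so the entire content of the corollary lies in the implication ``$\bm{A}$ has enough projectives'' $\Rightarrow$ ``$\bm{D}$ is generating'', which is immediate from $\Prj(\bm{A})\subseteq\bm{D}$.
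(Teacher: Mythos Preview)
Your proposal is correct and matches the paper's intended reasoning: the paper gives no explicit proof, presenting the statement as an immediate corollary of the preceding Proposition and Definition, and the only content is exactly the observation you make, namely that $\Prj(\bm{A})\subseteq{}^{\perp}(\bm{X}^{\perp})$ so that enough projectives forces the left class to be generating.
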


\begin{definition}
 Let $\bm{A}$ be a bicomplete Abelian category. 
 A model structure on $\bm{A}$ is called \emph{Abelian} if
 \begin{enumerate}
  \item a morphism is a cofibration if and only if it is a
	monomorphism with cofibrant cokernel,
  \item a morphism is a trivial cofibration if and only if it is a
	monomorphism with trivially cofibrant cokernel,
  \item a morphism is a fibration if and only if it is an
	epimorphism with fibrant kernel, and
  \item a morphism is a trivial fibration if and only if it is a
	epimorphism with trivially fibrant kernel.
 \end{enumerate}
\end{definition}

The following terminology is introduced by Gillespie
\cite{1512.06001}. 

\begin{definition}
 \label{def:Hovey_triple_in_Abelian_category}
 Let $\bm{A}$ be an Abelian category. A triple of subcategories
 $(\Cof,\Triv,\Fib)$ is called a
 \emph{Hovey triple} if 
 \begin{enumerate}
  \item $\Triv$ is a thick subcategory.
  \item $(\Cof,\Fib\cap \Triv)$ is a
	complete cotorsion pair. 
  \item $(\Cof\cap \Triv, \Fib)$ is a
	complete cotorsion pair. 
 \end{enumerate}
\end{definition}

\begin{theorem}[Hovey]
 \label{characterization_of_Abelian_model_structure}
 Let $\bm{A}$ be a bicomplete Abelian category. Suppose $\bm{A}$ is
 equipped with an Abelian model structure. 
 Denote the full subcategories of trivial, cofibrant, and fibrant
 objects by $\Triv$, $\Cof$, and $\Fib$, respectively. 
 Then $(\Cof,\Triv,\Fib)$ is a Hovey triple

 Conversely, given a Hovey triple $(\Cof, \Triv,\Fib)$
 there exists a unique Abelian model structure on $\bm{A}$ such that
 $\Triv$, $\Cof$, and $\Fib$ are subcategories of trivial, cofibrant,
 and fibrant objects, respectively. 
\end{theorem}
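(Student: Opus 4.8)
The plan is to route the entire proof through one bridging lemma that converts lifting problems in $\bm{A}$ into the vanishing of $\Ext^{1}_{\bm{A}}$, so that the (trivial) cofibration/fibration dichotomy of an Abelian model structure becomes literally the orthogonality defining a cotorsion pair. Concretely, I would first establish the following: given a monomorphism $i\colon A\to B$ with cokernel $C$ and an epimorphism $p\colon X\to Y$ with kernel $K$, every commutative square with vertical sides $i$ and $p$ admits a diagonal lift whenever $\Ext^{1}_{\bm{A}}(C,K)=0$; conversely, if every square built from the data $(C,K)$ lifts, then $\Ext^{1}_{\bm{A}}(C,K)=0$. The forward implication is a pullback-and-pushout diagram chase: forming $Q=B\times_{Y}X$, the image of $A$ in $Q$ produces a short exact sequence $0\to K\to Q/\mathrm{im}(A)\to C\to 0$ whose splitting (guaranteed by $\Ext^{1}_{\bm{A}}(C,K)=0$) is exactly a lift. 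The converse is immediate: take the square with top row $0\to E$, bottom row $\mathrm{id}_{C}$, and right-hand map a chosen extension $0\to K\to E\to C\to 0$, so that a lift is precisely a splitting. This lemma is the only place where genuine homological input enters; everything else is formal.

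For the first assertion (model structure $\Rightarrow$ Hovey triple) I would read off from the Abelian axioms that cofibrations are exactly the monomorphisms with cokernel in $\Cof$ and that trivial fibrations are exactly the epimorphisms with kernel in $\Fib\cap\Triv$. Since cofibrations are characterized by the left lifting property against trivial fibrations, applying the bridging lemma to the maps $0\to C$ and $W\to 0$ gives $C\perp W$ for all $C\in\Cof$, $W\in\Fib\cap\Triv$, i.e. $\Cof\subseteq{}^{\perp}(\Fib\cap\Triv)$ and dually $\Fib\cap\Triv\subseteq\Cof^{\perp}$. The reverse inclusions follow by the usual retract argument: factor $0\to C$ as a cofibration followed by a trivial fibration to obtain $0\to W\to D\to C\to 0$ with $D\in\Cof$ and $W\in\Fib\cap\Triv$; if $C\perp W$ this sequence splits, exhibiting $C$ as a retract of $D$, and $\Cof$ is retract-closed. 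Hence $(\Cof,\Fib\cap\Triv)$ is a cotorsion pair, and its completeness is exactly the content of the factorization axiom. The pair $(\Cof\cap\Triv,\Fib)$ is handled symmetrically using trivial cofibrations and fibrations. Thickness of $\Triv$ (retract-closure plus the two-out-of-three property for short exact sequences) is then deduced from retract-closure and two-out-of-three for weak equivalences, after factoring the structure maps $0\to W$ of the objects involved.

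For the converse I would \emph{define} cofibrations, trivial cofibrations, fibrations, and trivial fibrations by the four Abelian prescriptions applied to the given object classes, and declare a map to be a weak equivalence if it factors as a trivial cofibration followed by a trivial fibration. Bicompleteness is assumed; the two lifting axioms are immediate from the bridging lemma together with the orthogonality built into the two cotorsion pairs; and the two factorization axioms follow from completeness of those pairs by the standard pullback–pushout factorization (every map factors as a monomorphism with cokernel in $\Cof$ followed by an epimorphism with kernel in $\Fib\cap\Triv$, and likewise for the trivial versions). Uniqueness is then automatic: the Abelian axioms force all four classes of maps from $\Cof,\Triv,\Fib$, and the weak equivalences are pinned down as composites of trivial cofibrations and trivial fibrations.

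The main obstacle is verifying that the class of weak equivalences so defined is well behaved — independent of the chosen factorization, closed under retracts, and satisfying two-out-of-three — since this is what fuses the two cotorsion pairs into a single model structure rather than two unrelated ones. This is exactly where the thickness of $\Triv$ and the fact that both cotorsion pairs are organized around the same thick class $\Triv$ are used: one first shows that an admissible monomorphism is a weak equivalence precisely when its cokernel lies in $\Triv$ (dually for epimorphisms), after which two-out-of-three reduces to the short-exact-sequence closure properties packaged in thickness. I expect the bookkeeping here — rather than any single hard idea — to be the most delicate part, and I would organize it around that cokernel/kernel criterion so that the two-out-of-three verification becomes a purely diagrammatic consequence of thickness.
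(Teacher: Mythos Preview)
The paper does not prove this theorem at all: it appears in the preliminary section \S\ref{model_category} as background material, is attributed to Hovey, and is stated without proof (the surrounding text cites Hovey's paper \cite{Hovey2002-2} and Gillespie's survey \cite{1512.06001} as references). So there is no ``paper's own proof'' to compare against.

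Your proposal is a reasonable sketch of how Hovey's original proof goes, and the bridging lemma you isolate (lifting against monos/epis with prescribed cokernel/kernel is governed by $\Ext^{1}$) is indeed the key technical input. The direction from a model structure to a Hovey triple is essentially as you describe. For the converse, you correctly identify the two-out-of-three property for weak equivalences as the delicate point; note that Hovey's actual argument for this is somewhat intricate and requires careful use of the thickness of $\Triv$ together with both completeness statements simultaneously, so your phrase ``purely diagrammatic consequence of thickness'' may be slightly optimistic about the bookkeeping involved. One further point: you should also verify explicitly that the class you have defined as weak equivalences really coincides with the class of maps whose mapping-cone-type invariants lie in $\Triv$, since a priori ``factors as trivial cofibration followed by trivial fibration'' is not obviously independent of the factorization. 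But none of this is a gap in the sense of a wrong idea; it is just the expected work that Hovey carries out in his paper.
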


 By definition, in the Abelian model structure defined by a Hovey triple 
 $(\Cof,\Triv,\Fib)$, a morphism $f:X\to Y$ is
 \begin{itemize}
  \item a cofibration if and only if $f$ is a monomorphism and
	 $\Coker f\in \Cof$,
  \item a trivial cofibration if and only if $f$ is a monomorphism and 
	 $\Coker f\in \Cof\cap\Triv$,
  \item a fibration if and only if $f$ is an epimorphism and
	$\Ker f\in \Fib$, and
  \item a trivial fibration if and only if $f$ is an epimorphism and
	$\Ker f\in \Fib\cap\Triv$.
 \end{itemize}
 Furthermore the following characterization of weak equivalences is
 obtained by Hovey.

\begin{lemma}
 \label{weak_equivalence_in_Abelian_model_structure}
 In the Abelian model structure defined by a Hovey triple
 $(\Cof,\Triv,\Fib)$, a morphism $f:X\to Y$ is a weak equivalence if and
 only if there exist an epimorphism $p$ with $\Ker p\in \Triv$ and a
 monomorphism $i$ with $\Coker i \in \Triv$ such that
 $f=p\circ i$. 
\end{lemma}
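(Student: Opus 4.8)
The plan is to prove the two implications separately. The forward direction is an immediate consequence of the functorial factorizations in the model structure together with the two-out-of-three axiom, while the reverse direction requires a genuine argument using the completeness of the two cotorsion pairs making up the Hovey triple.

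For ($\Rightarrow$): given a weak equivalence $f \colon X \to Y$, factor it in the model structure as $f = p \circ i$ with $i$ a cofibration and $p$ a fibration. By two-out-of-three $p$ is a weak equivalence, hence a trivial fibration, and therefore $i$ is a trivial cofibration. Thus $i$ is a monomorphism with $\Coker i \in \Cof \cap \Triv \subseteq \Triv$ and $p$ is an epimorphism with $\Ker p \in \Fib \cap \Triv \subseteq \Triv$, which is precisely the asserted factorization.

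For ($\Leftarrow$), the heart of the matter is to show (a) every monomorphism $i \colon X \to Z$ with $\Coker i \in \Triv$ is a weak equivalence, and (b) every epimorphism $p \colon Z \to Y$ with $\Ker p \in \Triv$ is a weak equivalence; granting these, a factorization $f = p \circ i$ as in the statement exhibits $f$ as a composite of weak equivalences, hence a weak equivalence. To prove (a), set $C = \Coker i$ and use completeness of the cotorsion pair $(\Cof \cap \Triv, \Fib)$: it has enough projectives, so there is a short exact sequence $0 \to K \to Q \to C \to 0$ with $Q \in \Cof \cap \Triv$ and $K \in \Fib$, and since $\Triv$ is thick we get $K \in \Triv$, hence $K \in \Fib \cap \Triv$. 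Form the pullback $P = Z \times_C Q$ of the two epimorphisms onto $C$. The projection $P \to Z$ is an epimorphism with kernel $K \in \Fib \cap \Triv$, hence a trivial fibration; the canonical lift $\tilde{\imath} = (i,0) \colon X \to P$ is a monomorphism whose cokernel is (isomorphic to) $Q \in \Cof \cap \Triv$, hence a trivial cofibration; and $i = (P \to Z) \circ \tilde{\imath}$, so $i$ is a composite of two weak equivalences. Statement (b) follows by the dual argument, replacing enough projectives of $(\Cof \cap \Triv, \Fib)$ by enough injectives of $(\Cof, \Fib \cap \Triv)$ (which yields $0 \to K \to A' \to B' \to 0$ with $A' \in \Fib \cap \Triv$ and $B' \in \Cof \cap \Triv$) and the pullback by the pushout of $A' \hookleftarrow K = \Ker p \hookrightarrow Z$.

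The main obstacle is statements (a) and (b). The characterizations recorded above tell us directly only that a monomorphism with cokernel in $\Cof \cap \Triv$ is a trivial cofibration; to handle a cokernel lying merely in $\Triv$ one genuinely needs the pullback (resp.\ pushout) construction together with the thickness of $\Triv$ in order to ``trade'' it for a cokernel in $\Cof \cap \Triv$ (resp.\ a kernel in $\Fib \cap \Triv$). Everything else — the identification of the relevant kernels and cokernels of the pullback and pushout, and the two-out-of-three bookkeeping for composites — is routine.
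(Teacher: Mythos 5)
Your proof is correct; the paper itself gives no proof of this lemma (it cites Hovey), and your argument --- pulling back, respectively pushing out, against a short exact sequence supplied by completeness of the appropriate cotorsion pair, with thickness of $\Triv$ used to place the third term in $\Fib\cap\Triv$ (resp.\ $\Cof\cap\Triv$) --- is exactly the standard one. One small fix in the forward direction: start from the factorization $f = p\circ i$ with $i$ a cofibration and $p$ a \emph{trivial} fibration (or $i$ a trivial cofibration and $p$ a fibration), since two-out-of-three cannot be applied to conclude anything until one of the two factors is already known to be a weak equivalence.
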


By the projective cotorsion pair (Example
\ref{categorical_cotorsion_pair}) and Corollary
\ref{small_cotorsion_pair_in_Grothendieck_Abelian_category}, we obtain
the following. 

\begin{lemma}
 \label{Abelian_Hovey_triple_generated_by_Triv}
 Let $\bm{A}$ be a Grothendieck Abelian category having enough projectives.
 Given a thick subcategory $\Triv$,
 $({}^{\perp}\Triv,\Triv,\bm{A})$ is a Hovey triple
 if and only if
 \begin{enumerate}
  \item ${}^{\perp}\Triv\cap\Triv=\Prj(\bm{A})$,
	and
  \item $\left({}^{\perp}\Triv\right)^{\perp}=\Triv$,
 \end{enumerate}
 where $\Prj(\bm{A})$ is the full subcategory of projective objects in
 $\bm{A}$. 
\end{lemma}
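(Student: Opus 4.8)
The plan is to unwind both directions of the equivalence using Hovey's characterization (Theorem \ref{characterization_of_Abelian_model_structure}) specialized to the case $\Fib=\bm{A}$. Since all objects are fibrant, the cotorsion pair $(\Cof\cap\Triv,\Fib)=(\Cof\cap\Triv,\bm{A})$ forces $\Cof\cap\Triv\subseteq\Prj(\bm{A})$; and since $(\Cof,\Triv\cap\Fib)=(\Cof,\Triv)$ must be a cotorsion pair, we get $\Cof={}^{\perp}\Triv$ automatically. So the content to extract is exactly: a Hovey triple of the form $(\Cof,\Triv,\bm{A})$ with $\Triv$ thick exists and equals $({}^{\perp}\Triv,\Triv,\bm{A})$ precisely when conditions (1) and (2) hold.

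First I would prove the ``only if'' direction. Assume $({}^{\perp}\Triv,\Triv,\bm{A})$ is a Hovey triple. Then by Definition \ref{def:Hovey_triple_in_Abelian_category}(3), $({}^{\perp}\Triv\cap\Triv,\bm{A})$ is a complete cotorsion pair; but the injective cotorsion pair of Example \ref{categorical_cotorsion_pair} is $(\Prj(\bm{A}),\bm{A})$, and a cotorsion pair is determined by either class, so ${}^{\perp}\Triv\cap\Triv=\Prj(\bm{A})$, giving (1). Similarly, Definition \ref{def:Hovey_triple_in_Abelian_category}(2) says $({}^{\perp}\Triv,\Triv\cap\bm{A})=({}^{\perp}\Triv,\Triv)$ is a (complete) cotorsion pair, whose defining property (2) of Definition \ref{def:cotorsion_pair} reads $\Triv=({}^{\perp}\Triv)^{\perp}$, giving (2).

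For the ``if'' direction, assume (1) and (2). I need to check the three conditions of Definition \ref{def:Hovey_triple_in_Abelian_category} for the triple $({}^{\perp}\Triv,\Triv,\bm{A})$. Condition (1) is the hypothesis that $\Triv$ is thick. For condition (2), I must show $({}^{\perp}\Triv,\Triv)$ is a complete cotorsion pair: it is a cotorsion pair because ${}^{\perp}\Triv={}^{\perp}(({}^{\perp}\Triv)^{\perp})={}^{\perp}(\Triv^{\perp})^{\perp}$... more cleanly, by hypothesis (2) we have $({}^{\perp}\Triv)^{\perp}=\Triv$, and $({}^{\perp}(\Triv^{\perp}),\Triv^{\perp})$ is always a cotorsion pair, so $({}^{\perp}\Triv,\Triv)$ is the cotorsion pair generated by... here I invoke that $\Triv$ is a \emph{set}-generated thick subcategory — this is where the Grothendieck-with-enough-projectives hypothesis and Corollary \ref{small_cotorsion_pair_in_Grothendieck_Abelian_category} enter: the cotorsion pair generated by a set of generators of $\Triv$ is small, hence complete, and it coincides with $({}^{\perp}\Triv,\Triv)$ once we know $({}^{\perp}\Triv)^{\perp}=\Triv$. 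For condition (3), $({}^{\perp}\Triv\cap\Triv,\bm{A})$: by hypothesis (1) this is $(\Prj(\bm{A}),\bm{A})$, which is the projective cotorsion pair, complete because $\bm{A}$ has enough projectives by assumption.

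The main obstacle I anticipate is the completeness of the cotorsion pair $({}^{\perp}\Triv,\Triv)$ in condition (2): having enough injectives is formal from set-generation, but having enough projectives requires that ${}^{\perp}\Triv$ be a generating class, and I would need to argue this carefully from the standing hypotheses — plausibly by noting that $\Prj(\bm{A})\subseteq{}^{\perp}\Triv$ (every projective is left-orthogonal to everything) and $\bm{A}$ has enough projectives, so ${}^{\perp}\Triv$ is automatically generating, at which point Corollary \ref{small_cotorsion_pair_in_Grothendieck_Abelian_category} applies provided $\Triv$ (equivalently $({}^{\perp}\Triv,\Triv)$) is generated by a set. The one genuinely external input is that $\Triv$ be set-generated; in the intended application $\Triv=\Triv_{A,H}^{\Sigma}$ this will be verified separately, so for the present lemma I would either add it as a tacit hypothesis or phrase the statement so that ``small'' is what is really used, matching how the lemma is invoked in \S\ref{orthogonality}.
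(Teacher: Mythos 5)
Your proposal matches the paper's (very terse) justification: the lemma is derived there in one line from the projective cotorsion pair of Example \ref{categorical_cotorsion_pair} and Corollary \ref{small_cotorsion_pair_in_Grothendieck_Abelian_category}, which is exactly your decomposition --- condition (1) is the identity making $({}^{\perp}\Triv\cap\Triv,\bm{A})$ the projective cotorsion pair, complete because $\bm{A}$ has enough projectives, and condition (2) is the identity making $({}^{\perp}\Triv,\Triv)$ a cotorsion pair. The caveat you raise about completeness of $({}^{\perp}\Triv,\Triv)$ is genuine: conditions (1)--(2) alone do not supply enough projectives and injectives for this pair, and Corollary \ref{small_cotorsion_pair_in_Grothendieck_Abelian_category} only applies once one knows $\Triv$ is the right orthogonal of a \emph{set}, a hypothesis the paper leaves tacit both in the lemma and when it is invoked for $\Triv_{A,H}^{\Sigma}$ in \S\ref{orthogonality} (where it does hold, since $\Triv_{A,H}^{\Sigma}=\{\Sigma^{n}(A\otimes k)\}_{n\in\Z}^{\perp}$ by the argument in the final proposition there). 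Your proposed repair --- adding set-generation as a hypothesis and noting that $\Prj(\bm{A})\subseteq{}^{\perp}\Triv$ makes the left class generating --- is exactly what is needed to make the stated proof of the ``if'' direction close.
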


\section{Cotorsion pairs in the category of equivariant modules}
\label{cotorsion_pair}

\subsection{Equivariant projective modules}
\label{projectives}

In \cite{Avramov-Foxby-Halperin}, Avramov, Foxby, and Halperin compared
various notions of projectives in the category of dg modules over a dg
algebra.
In this section, we study Hopfological counterparts.
Throughout this section, we fix a left $H$-module category $A$.
Given an $H$-equivariant $A$-module $P$, we have two functors
\begin{align*}
 \bm{C}_{A,H}(P,-) & : \bm{C}_{A,H} \rarrow{} \lMod{H} \\
 \bm{C}_{A,H}^{H}(P,-) & : \bm{C}_{A,H}^{H} \rarrow{} \lMod{k}.
\end{align*}
Various notions of projectivities are defined by the degrees of
preservations of surjectivities by these functors.

\begin{definition}
 \label{def:equivariant_projectives}
 Let $P$ be an $H$-equivariant left $A$-module. 
 \begin{enumerate}
  \item $P$ is called \emph{$\Sigma$-linearly projective}, if, for any
	surjective morphism $f:M\to N$, 
	\[
	\bm{C}_{A,H}(P,\Sigma^{n}f) :
	\bm{C}_{A,H}(P,\Sigma^{n}M) \rarrow{} 
	\bm{C}_{A,H}(P,\Sigma^{n}N) 
	\]
	is surjective for all $n\in\Z$.

  \item $P$ is called \emph{$\Sigma$-homotopically projective}, if, 
	for any $\Sigma$-quism $f:M\to N$,
	\[
	\bm{C}_{A,H}(P,\Sigma^{n}f) :
	\bm{C}_{A,H}(P,\Sigma^{n}M) \rarrow{} 
	\bm{C}_{A,H}(P,\Sigma^{n}N) 
	\]
	is a quism for all $n\in\Z$.

  \item $P$ is called \emph{$\Sigma$-semiprojective}, if, for any
	surjective $\Sigma$-quism $f:M\to N$,
	\[
	\bm{C}_{A,H}(P,\Sigma^{n}f) :
	\bm{C}_{A,H}(P,\Sigma^{n}M) \rarrow{} 
	\bm{C}_{A,H}(P,\Sigma^{n}N) 
	\]
	is a surjective quism for all $n\in\Z$.
	
  \item $P$ is called \emph{$\Sigma$-Qi-projective}, if
	for any surjective $\Sigma$-quism $f:M\to N$, 
	\[
	\bm{C}_{A,H}^{H}(P,\Sigma^{n}f) :
	\bm{C}_{A,H}^{H}(P,\Sigma^{n}M) \rarrow{} 
	\bm{C}_{A,H}^{H}(P,\Sigma^{n}N)
	\]
	is surjective for all $n\in\Z$.
 \end{enumerate}

 The full subcategories of $\bm{C}_{A,H}^{H}$ consisting of
 $\Sigma$-homotopically projectives, $\Sigma$-semiprojectives, and
 $\Sigma$-Qi-projectives are denoted
 by $\HoPrj_{\Sigma}$, $\SemiPrj_{\Sigma}$, and
 $\QiPrj_{\Sigma}$ respectively. 
 Corresponding full subcategories of $\cT_{A,H}^{H}$ are denoted by the
 same symbols. 
\end{definition}

\begin{remark}
 Under the isomorphism
 $\bm{C}_{A,H}(P,\Sigma^{n}M)\cong\Sigma^{n}\bm{C}_{A,H}(P,M)$,
 $P$ is $\Sigma$-homotopically projective if and only if
 $\bm{C}_{A,H}(P,-)$
 transforms $\Sigma$-quisms to $\Sigma$-quisms.
\end{remark}

\begin{lemma}
 \label{Qi-projective_is_A-projective}
 If $P$ is either $\Sigma$-semiprojective or $\Sigma$-Qi-projective,
 then $U(P)$ is projective in $\lMod{\underline{A}}$. 
\end{lemma}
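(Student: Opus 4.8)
The plan is to reduce the statement to a lifting property against $\underline{A}$-split epimorphisms and then invoke the completeness results already available. Recall that $U$ has the exact left adjoint $C:\lMod{\underline{A}}\to\bm{C}_{A,H}^{H}$ and the exact right adjoint $E:\lMod{\underline{A}}\to\bm{C}_{A,H}^{H}$. To prove $U(P)$ is projective in $\lMod{\underline{A}}$ it suffices to show that, for every surjection $g:X\twoheadrightarrow Y$ of $\underline{A}$-modules, the map $(\lMod{\underline{A}})(U(P),g)$ is surjective. By the adjunction $(U\dashv E)$ this is equivalent to surjectivity of $\bm{C}_{A,H}^{H}(P,E(g))$, so it is enough to find, for any $H$-equivariant map $P\to E(Y)$, a lift to $E(X)$.

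First I would observe that $E$ carries surjections to surjections (since $k$ is a field, $E=(\lMod{k})(H,-)$ is exact, as recorded in the excerpt), so $E(g):E(X)\to E(Y)$ is an epimorphism in $\bm{C}_{A,H}^{H}$. Moreover $E(g)$ is an $\underline{A}$-split epimorphism: its underlying $\underline{A}$-module map $U E(g)$ is split because $U E(M)\cong (\lMod{k})(H,M)$ is a direct sum of copies of $M$ indexed by a basis of $H$, and $g$ is already split as a $k$-linear map, hence $UE(g)$ admits a $k$-linear, and therefore $\underline{A}$-linear, section. Thus we have a short exact sequence
\[
0 \rarrow{} K \rarrow{} E(X) \rarrow{E(g)} E(Y) \rarrow{} 0
\]
in $\bm{C}_{A,H}^{H}$ which is $A$-split. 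Next, by Lemma \ref{P_is_acyclic}, both $E(X)$ and $E(Y)$ are $\Sigma$-acyclic, and since $\Triv_{A,H}^{\Sigma}$ is thick (Lemma \ref{Triv*_is_thick}) and closed under the kernels appearing in $A$-split sequences (which give triangles in $\cT_{A,H}^{H}$ by Lemma \ref{Hopf_mapping_cone}, together with the long exact homology sequence of Proposition \ref{H-homology_is_homological}), the kernel $K$ is $\Sigma$-acyclic as well; in particular $E(g)$ is a surjective $\Sigma$-quism.

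Now I would apply the hypothesis on $P$. If $P$ is $\Sigma$-Qi-projective, then by definition $\bm{C}_{A,H}^{H}(P,E(g))$ is surjective, which is exactly what we needed. If instead $P$ is $\Sigma$-semiprojective, then $\bm{C}_{A,H}(P,E(g))$ is a surjective quism of $H$-modules; taking $Z(-)=(-)^{H}$ and using $Z(\bm{C}_{A,H}(P,-))=\bm{C}_{A,H}^{H}(P,-)$ (Corollary \ref{invariant_and_equivariant}) together with Lemma \ref{surjectivity}, we conclude that $\bm{C}_{A,H}^{H}(P,E(g))=Z(\bm{C}_{A,H}(P,E(g)))$ is surjective. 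Either way the required lift exists, so $U(P)$ is projective in $\lMod{\underline{A}}$. The main obstacle is the bookkeeping in the middle step: verifying carefully that $E(g)$ is genuinely an $A$-split surjective $\Sigma$-quism — in particular identifying $UE(M)$ with a sum of copies of $M$ compatibly with the $\underline{A}$-action, and confirming that the resulting $A$-split sequence keeps its kernel inside $\Triv_{A,H}^{\Sigma}$ — since everything downstream is then a direct appeal to the definitions and to Lemma \ref{surjectivity}.
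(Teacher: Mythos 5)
Your argument is essentially the paper's own proof: reduce via the adjunction $U\dashv E$ to lifting against $E(g)$, use exactness of $E$ and Lemma \ref{P_is_acyclic} to see that $E(g)$ is a surjective $\Sigma$-quism, and then apply the definition of $\Sigma$-Qi-projectivity, respectively the definition of $\Sigma$-semiprojectivity together with Lemma \ref{surjectivity} and Corollary \ref{invariant_and_equivariant}. The one place you go beyond the paper --- the ``main obstacle'' of checking that $E(g)$ is $A$-split so that its kernel lies in $\Triv_{A,H}^{\Sigma}$ --- is both unnecessary and shaky: a surjection between $\Sigma$-acyclic objects is automatically a $\Sigma$-quism, since $H(\Sigma^{n}E(X))=H(\Sigma^{n}E(Y))=0$ for all $n$ makes the induced map on homology trivially an isomorphism, with no information about the kernel required. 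Moreover, the splitting claim itself is doubtful as stated: under the identification $UE(M)\cong M^{\oplus\dim_{k}H}$ of $\underline{A}$-modules, the map $UE(g)$ corresponds to $g^{\oplus\dim_{k}H}$, and a merely $k$-linear section of $g$ does not produce an $\underline{A}$-linear section of this. Deleting that paragraph leaves a correct proof identical in substance to the one in the paper.
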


\begin{proof}
 Suppose we have a diagram of $\underline{A}$-modules
 \[
 \xymatrix{
  & U(P) \ar[d]_{g} & \\
  M \ar[r]_{f} & N \ar[r] & 0
 }
 \]
 in which the bottom row is exact.
 By taking the right adjoint, we obtain a diagram
 \[
 \xymatrix{
 & P \ar[d]^{\tilde{g}} & \\
 E(M) \ar[r]^{E(f)} & E(N) \ar[r] & 0 
 }
 \]
 by Proposition \ref{P_is_right_adjoint_to_U}.
 Since $E$ is an exact functor, the bottom row is exact.
 By Lemma \ref{P_is_acyclic}, both $E(M)$  and $E(N)$
 are $\Sigma$-acyclic. Hence $E(f)$ is a surjective $\Sigma$-quism. 

 When $P$ is $\Sigma$-Qi-projective, the induced map
 \[
 \bm{C}_{A,H}^{H}(P,E(f)) : \bm{C}_{A,H}^{H}(P,E(M)) \rarrow{}
 \bm{C}_{A,H}^{H}(P,E(N)) 
 \]
 is surjective by definition.

 When $P$ is $\Sigma$-semiprojective, the induced map
 \[
 \bm{C}_{A,H}(P,E(f)) : \bm{C}_{A,H}(P,E(M)) \rarrow{}
 \bm{C}_{A,H}(P,E(N)) 
 \]
 is a surjective quism. 
 By Lemma \ref{surjectivity}, $\bm{C}_{A,H}^{H}(P,f)$ is surjective. 

 Thus $\tilde{g}$ has a lift $P\to E(M)$ in
 $\bm{C}_{A,H}^{H}$ in both cases. 
 The left adjoint to this morphism is a lift of $g$.
 Hence $U(P)$ is projective as an $\underline{A}$-module in both cases.
\end{proof}

The following is a modification of Lemma 6.2 in Qi's paper.

\begin{lemma}
 \label{Qi-projective_is_stably_trivial}
 If $P$ is $\Sigma$-Qi-projective, then, for any $\Sigma$-acyclic object
 $T$, $\bm{C}_{A,H}(P,T)$ is $\Sigma$-acyclic. In other words,
 $\cT_{A,H}^{H}(P,\Sigma^{n}T)=0$ for all $n$.
\end{lemma}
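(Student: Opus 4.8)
The plan is to reduce the statement to a single structural fact --- that for \emph{any} $P$ and $N$ the $H$-module $\bm{C}_{A,H}(P,C(N))$ is acyclic --- and to invoke $\Sigma$-Qi-projectivity only to obtain a strict lifting against a cone. For the structural fact, Lemma~\ref{suspension_and_Hom} with $V=H$ gives an isomorphism of $H$-modules $\bm{C}_{A,H}(P,C(N))=\bm{C}_{A,H}(P,N\otimes H)\cong\bm{C}_{A,H}(P,N)\otimes H$; an $H$-module of the form $W\otimes H$ is a cone, hence acyclic by Example~\ref{projective_module_is_acyclic}, so $H(\bm{C}_{A,H}(P,C(N)))=0$. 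By Example~\ref{H-homotopy_class_by_homology} this says $\cT_{A,H}^{H}(P,C(N))=0$, i.e.\ every morphism $P\to C(N)$ in $\bm{C}_{A,H}^{H}$ is null-homotopic.

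Next, fix $n\in\Z$; since $T$ is $\Sigma$-acyclic so is $\Sigma^{n}T$, and I would show $\cT_{A,H}^{H}(P,\Sigma^{n}T)=0$ by factoring maps into $\Sigma^{n}T$ through a cone. Consider $\pi:=1\otimes\varepsilon\colon C(\Sigma^{n}T)=\Sigma^{n}T\otimes H\to\Sigma^{n}T$. It is a morphism in $\bm{C}_{A,H}^{H}$: it operates on the $\Sigma^{n}T$-factor, so it is $\underline{A}$-linear, and the counit axiom together with $\varepsilon$ being a morphism of $H$-modules makes it intertwine the diagonal $H$-action on $C(\Sigma^{n}T)$ with the action on $\Sigma^{n}T$. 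It is surjective, being split on underlying modules by $x\mapsto x\otimes 1$. For every $m\in\Z$, $\Sigma^{m}C(\Sigma^{n}T)$ is acyclic by Example~\ref{suspension_of_cone_is_acyclic} and $\Sigma^{m+n}T$ is acyclic by hypothesis, so $H(\Sigma^{m}\pi)$ is (trivially) an isomorphism; thus $\pi$ is a surjective $\Sigma$-quism. Since $P$ is $\Sigma$-Qi-projective, the $n=0$ instance of the defining property says $\bm{C}_{A,H}^{H}(P,\pi)$ is surjective, so any $f\colon P\to\Sigma^{n}T$ in $\bm{C}_{A,H}^{H}$ factors strictly as $f=\pi\circ\tilde f$ with $\tilde f\colon P\to C(\Sigma^{n}T)$. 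By the previous paragraph $\tilde f\simeq 0$, hence $f\simeq 0$, and therefore $\cT_{A,H}^{H}(P,\Sigma^{n}T)=0$.

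Finally, combining Lemma~\ref{suspension_and_Hom} with Example~\ref{H-homotopy_class_by_homology} gives $H(\Sigma^{n}\bm{C}_{A,H}(P,T))\cong H(\bm{C}_{A,H}(P,\Sigma^{n}T))=\cT_{A,H}^{H}(P,\Sigma^{n}T)=0$ for all $n$, which is exactly the assertion that $\bm{C}_{A,H}(P,T)$ is $\Sigma$-acyclic. I do not expect a serious obstacle: nothing about $P$ is used beyond the lifting property in the definition of $\Sigma$-Qi-projective. The points that need care are essentially bookkeeping --- verifying that $\pi$ genuinely lies in $\bm{C}_{A,H}^{H}$ and that "$\Sigma$-quism'' is automatic here because both homologies vanish, and noting that the factorization $f=\pi\circ\tilde f$ is obtained on the nose in $\bm{C}_{A,H}^{H}$, which is what lets us kill $\tilde f$ up to homotopy using the structural fact.
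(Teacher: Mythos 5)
Your proof is correct and follows essentially the same route as the paper's: both lift a map $P\to\Sigma^{n}T$ through the surjective $\Sigma$-quism $1\otimes\varepsilon\colon C(\Sigma^{n}T)\to\Sigma^{n}T$ using $\Sigma$-Qi-projectivity, and then kill the lift using acyclicity of $\bm{C}_{A,H}(P,-)$ applied to a cone. The only difference is cosmetic: where you say ``$\tilde f\simeq 0$, hence $\pi\circ\tilde f\simeq 0$ since composition descends to $\cT_{A,H}^{H}$,'' the paper writes out the same step explicitly as $\psi=\lambda\rho$ and $\varphi=\lambda\bigl((1\otimes\varepsilon)\circ\rho\bigr)$.
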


\begin{proof}
 If $T$ is $\Sigma$-acyclic, the surjective map
 \[
  1\otimes \varepsilon : T\otimes H \rarrow{} T
 \]
 is a $\Sigma$-quism, since $T\otimes H=C(T)$ is $\Sigma$-acyclic by
 Corollary \ref{suspension_of_cone_is_acyclic}.
 Since $P$ is $\Sigma$-Qi-projective, the induced map
 \[
  \bm{C}_{A,H}^{H}(P,\Sigma^{n}(T\otimes H)) \rarrow{}
 \bm{C}_{A,H}^{H}(P,\Sigma^{n}(T)) 
 \]
 is surjective for all $n\in\Z$.
 For any $\varphi\in\bm{C}_{A,H}^{H}(P,\Sigma^{n}T)$, there exists
 $\psi\in\bm{C}_{A,H}^{H}(P,\Sigma^{n}(T\otimes H))$ such that
 $(1\otimes\varepsilon)\circ\psi=\varphi$.
 By Lemma \ref{suspension_and_Hom}, we have an isomorphism of
 $H$-modules 
 \[
  \bm{C}_{A,H}(P,\Sigma^{n}(T\otimes H)) \cong
 \Sigma^{n}(\bm{C}_{A,H}(P,T)\otimes H). 
 \]
 By Corollary \ref{suspension_of_cone_is_acyclic}, this 
 is acyclic, which implies that there exists
 $\rho\in \bm{C}_{A,H}(P,\Sigma^{n}(T\otimes H))$ such that
 $\psi=\lambda\rho$.
 By the same calculation as in the proof of Lemma 6.2 of Qi's paper, we
 have
 \[
  \varphi = \lambda(1\otimes\varepsilon)\circ\rho.
 \]
 Thus $\cT_{A,H}(P,\Sigma^{n}T)=0$ for all $n$.
\end{proof}

Avramov, Foxby, and Halperin \cite{Avramov-Foxby-Halperin} found many
equivalent descriptions of homotopical projectivity and
semiprojectivity. Here we prove analogues of some of them. 

\begin{lemma}
 \label{Sigma-semiprojectivity}
 For an $H$-equivariant $A$-module $P$, the following
 conditions are equivalent:
 \begin{enumerate}
  \item \label{condition:semiprojective}
	$P$ is $\Sigma$-semiprojective.
  \item \label{condition:dg_projective}
	$P$ is $\Sigma$-homotopically projective and
	$U(P)$ is projective as an $A$-module.
  \item \label{condition:Qi-projective}
	$P$ is $\Sigma$-Qi-projective.
 \end{enumerate}
\end{lemma}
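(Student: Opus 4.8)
The plan is to prove the cycle of implications $(\ref{condition:semiprojective})\Rightarrow(\ref{condition:Qi-projective})\Rightarrow(\ref{condition:dg_projective})\Rightarrow(\ref{condition:semiprojective})$, using the adjunctions and the canonical homological functor already developed. Throughout, recall that for $H$-equivariant $A$-modules $P$ and $M$ we have $\bm{C}_{A,H}^{H}(P,M)=Z(\bm{C}_{A,H}(P,M))$ by Example \ref{H-homotopy_class_by_homology}, and that $H(\bm{C}_{A,H}(P,M))=\cT_{A,H}^{H}(P,M)$; these identifications let us translate between the $k$-module-valued $\Hom$ and the $H$-module-valued $\bm{C}_{A,H}(P,-)$.

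\textbf{$(\ref{condition:semiprojective})\Rightarrow(\ref{condition:Qi-projective})$.} Suppose $P$ is $\Sigma$-semiprojective and let $f:M\to N$ be a surjective $\Sigma$-quism. Then $\bm{C}_{A,H}(P,\Sigma^{n}f)$ is a surjective quism for all $n$, and by Lemma \ref{surjectivity} applied to the surjective quism $\bm{C}_{A,H}(P,\Sigma^{n}f)$, the induced map on $Z(-)$ is surjective; but $Z(\bm{C}_{A,H}(P,\Sigma^{n}M))=\bm{C}_{A,H}^{H}(P,\Sigma^{n}M)$, so $\bm{C}_{A,H}^{H}(P,\Sigma^{n}f)$ is surjective. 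Hence $P$ is $\Sigma$-Qi-projective. This direction is essentially immediate from Lemma \ref{surjectivity}.

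\textbf{$(\ref{condition:Qi-projective})\Rightarrow(\ref{condition:dg_projective})$.} Assume $P$ is $\Sigma$-Qi-projective. That $U(P)$ is projective as an $\underline{A}$-module is Lemma \ref{Qi-projective_is_A-projective}. It remains to show $\Sigma$-homotopical projectivity, i.e.\ that $\bm{C}_{A,H}(P,-)$ sends $\Sigma$-quisms to $\Sigma$-quisms. The standard trick is to factor a $\Sigma$-quism $f:M\to N$ through its mapping cone: one has the $A$-split triangle $M\to N\to C_{f}\to \Sigma(M)$ (using that $U(P)$ being projective makes $\bm{C}_{A,H}(P,-)$ exact, so by Lemma \ref{mapping_cone_and_A-hom} applying $\bm{C}_{A,H}(P,-)$ again yields a mapping-cone triangle). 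Since $f$ is a $\Sigma$-quism, $C_{f}$ is $\Sigma$-acyclic; by Lemma \ref{Qi-projective_is_stably_trivial}, $\bm{C}_{A,H}(P,C_{f})$ is $\Sigma$-acyclic, hence its suspension $\bm{C}_{A,H}(P,C_{\bm{C}_{A,H}(P,f)})$ is acyclic in all degrees, which by the long exact sequence of the canonical homological functor (Proposition \ref{H-homology_is_homological}) forces $H(\Sigma^{n}\bm{C}_{A,H}(P,f))$ to be an isomorphism for all $n$. Thus $P$ is $\Sigma$-homotopically projective.

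\textbf{$(\ref{condition:dg_projective})\Rightarrow(\ref{condition:semiprojective})$.} Assume $P$ is $\Sigma$-homotopically projective with $U(P)$ projective. Let $f:M\to N$ be a surjective $\Sigma$-quism. Because $U(P)$ is $\underline{A}$-projective, $\bm{C}_{A,H}(P,-)$ is exact, so $\bm{C}_{A,H}(P,\Sigma^{n}f)$ is surjective for each $n$; and it is a quism by $\Sigma$-homotopical projectivity. Hence it is a surjective quism, which is exactly $\Sigma$-semiprojectivity. I expect the main obstacle to be the $(\ref{condition:Qi-projective})\Rightarrow(\ref{condition:dg_projective})$ step: one must be careful that $\bm{C}_{A,H}(P,-)$ is genuinely exact before invoking Lemma \ref{mapping_cone_and_A-hom}, and then correctly chase the long exact sequences coming from the mapping-cone triangle through all suspensions to conclude that $\bm{C}_{A,H}(P,f)$ is a $\Sigma$-quism — the bookkeeping with $\Sigma^{n}$ and the identification $\Sigma^{n}\bm{C}_{A,H}(P,M)\cong\bm{C}_{A,H}(P,\Sigma^{n}M)$ of Lemma \ref{suspension_and_Hom} is where errors are most likely to creep in.
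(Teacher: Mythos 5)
Your proof is correct and uses essentially the same ingredients as the paper's: Lemma \ref{surjectivity} for $(1)\Rightarrow(3)$, Lemma \ref{Qi-projective_is_A-projective} plus the mapping-cone/long-exact-sequence argument via Lemmas \ref{Qi-projective_is_stably_trivial}, \ref{mapping_cone_and_A-hom}, and \ref{suspension_and_Hom} for the homotopical projectivity, and exactness of $\bm{C}_{A,H}(P,-)$ for $(2)\Rightarrow(1)$; the only difference is that you organize these as a three-step cycle rather than the paper's two separate biconditionals, which is a harmless (slightly more economical) reshuffling. One notational slip: the object you want to call acyclic is $C_{\bm{C}_{A,H}(P,f)}\cong\bm{C}_{A,H}(P,C_{f})$, not ``$\bm{C}_{A,H}(P,C_{\bm{C}_{A,H}(P,f)})$''.
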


\begin{proof}
 Let us first show that (\ref{condition:semiprojective}) and
 (\ref{condition:dg_projective}) are equivalent.
 
 Suppose $P$ is $\Sigma$-semiprojective.
 By Lemma \ref{Qi-projective_is_A-projective}, $P$ is projective as an
 $\underline{A}$-module. 
 In order to show that $P$ is $\Sigma$-homotopically projective, let
 $f:M\to N$ be a $\Sigma$-quism.
 We have a short exact sequence
 \[
  0 \rarrow{} \Sigma^n(N) \rarrow{} C_{\Sigma^{n}f} \rarrow{\delta}
 \Sigma^{n+1}(M) \rarrow{} 0, 
 \]
 which defines a triangle
 \[
  \Sigma^{n}(M) \rarrow{\Sigma^{n}f} \Sigma^n(N) \rarrow{}
 C_{\Sigma^{n}f} \rarrow{} \Sigma^{n+1}(M). 
 \]
 Since $H$ is a homological functor, we see from the long exact sequence
 associated with this triangle that $H(C_{\Sigma^{n}f})=0$ for all $n$.
 In other words, 
 \[
  C_{f} \rarrow{} 0
 \]
 is a surjective $\Sigma$-quism.
 Since $P$ is $\Sigma$-semiprojective,
 \[
  \bm{C}_{A,H}(P,C_{f})) \rarrow{} \bm{C}_{A,H}(P,0) = 0
 \]
 is a surjective $\Sigma$-quism, i.e.~$\bm{C}_{A,H}(P,C_{f})$ is
 $\Sigma$-acyclic. 

 On the other hand, since $P$ is projective as an $\underline{A}$-module,
 $\bm{C}_{A,H}(P,-)=(\lMod{\underline{A}})(P,-)$ is an exact functor 
 and we obtain an extension
 \[
 0 \rarrow{} \bm{C}_{A,H}(P,\Sigma^{n}(N)) \rarrow{}
 \bm{C}_{A,H}(P,C_{\Sigma^{n}f}) 
 \rarrow{\delta} \bm{C}_{A,H}(P,\Sigma^{n+1}(M)) \rarrow{} 0 
 \]
 in $\lMod{H}$.
 By Lemma \ref{mapping_cone_and_A-hom} and \ref{suspension_and_Hom}, this
 is isomorphic to  
 \[
 0 \rarrow{} \Sigma^{n}\bm{C}_{A,H}(P,N) \rarrow{}
 C_{\bm{C}_{A,H}(P,\Sigma^{n}f)} 
 \rarrow{\delta} \Sigma^{n+1}(\bm{C}_{A,H}(P,M)) \rarrow{} 0, 
 \]
 and thus we obtain a triangle
 \[
 \bm{C}_{A,H}(P,\Sigma^{n}(M)) \rarrow{\bm{C}_{A,H}(P,\Sigma^{n}(f))}
 \bm{C}_{A,H}(P,\Sigma^{n}(N)) \rarrow{} 
 \bm{C}_{A,H}(P,C_{\Sigma^n f}) 
 \rarrow{\delta} \bm{C}_{A,H}(P,\Sigma^{n+1}(M))
 \]
 in the homotopy category $\ho(\lMod{H})$.

 By the associated long exact sequence of homology, we see that
 \[
 H(\bm{C}_{A,H}(P,\Sigma^{n}(f))): H(\bm{C}_{A,H}(P,\Sigma^{n}(M)))
 \rarrow{} H(\bm{C}_{A,H}(P,\Sigma^{n}(N)))
 \]
 is an isomorphism for all $n$, since $\bm{C}_{A,H}(P,C_{f})$ is
 $\Sigma$-acyclic. 
 And thus $P$ is homotopically $\Sigma$-semiprojective.
 
 Conversely suppose that $U(P)$ is projective as an
 $\underline{A}$-module and $P$ 
 is $\Sigma$-homotopically semiprojective.
 Let $f: M\to N$ be a surjective $\Sigma$-quism.
 Since $P$ is $\Sigma$-homotopically projective, $\bm{C}_{A,H}(P,f)$ is a
 $\Sigma$-quism. Furthermore, since $U(P)$ is projective in
 $\lMod{\underline{A}}$, 
 $\bm{C}_{A,H}(P,-)$ is an exact functor. In particular,
 $\bm{C}_{A,H}(P,f)$ is surjective.
 Hence $P$ is $\Sigma$-semiprojective.

 We next show that (\ref{condition:semiprojective}) and
 (\ref{condition:Qi-projective}) are equivalent.
 Suppose $P$ is $\Sigma$-semiprojective and let $f:M\to N$ be a
 surjective $\Sigma$-quism.
 By definition, 
 \[
  \bm{C}_{A,H}(P,\Sigma^{n}f) : \bm{C}_{A,H}(P,\Sigma^{n}M) \rarrow{}
 \bm{C}_{A,H}(P,\Sigma^{n}N) 
 \]
 is a surjective quism for all $n\in\Z$. By Lemma \ref{surjectivity},
 the induced map
 \[
 \bm{C}_{A,H}^{H}(P,\Sigma^{n}f):
 \bm{C}_{A,H}^{H}(P,\Sigma^{n}M) =
  Z(\bm{C}_{A,H}(P,\Sigma^{n}M))
  \rarrow{Z(\bm{C}_{A,H}(P,\Sigma^{n}f))}
  Z(\bm{C}_{A,H}(P,\Sigma^{n}N)) = \bm{C}_{A,H}^{H}(P,\Sigma^{n}N) 
 \]
 is surjective. Hence $P$ is $\Sigma$-Qi-projective.

 Conversely suppose that $P$ is $\Sigma$-Qi-projective.
 Let $f:M\to N$ be a surjective $\Sigma$-quism.
 We need to show that the induce morphism
 \[
 \bm{C}_{A,H}(P,\Sigma^{n}f): \bm{C}_{A,H}(P,\Sigma^{n}M) \rarrow{}
 \bm{C}_{A,H}(P,\Sigma^{n}N)
 \]
 is a surjective quism for all $n\in\Z$.
 By Lemma \ref{Qi-projective_is_A-projective}, $U(P)$ is a projective
 $\underline{A}$-module and thus $\bm{C}_{A,H}(P,-)$ is an exact
 functor, which implies that $\bm{C}_{A,H}(P,\Sigma^{n}f)$ is
 surjective.

 Let us show that $\bm{C}_{A,H}(P,\Sigma^{n}f)$ is a quism for all
 $n$, i.e.~the induced map 
 \[
 H(\bm{C}_{A,H}(P,\Sigma^{n}M))
 \cong \cT_{A,H}^{H}(P, \Sigma^{n} M)
 \rarrow{\cT_{A,H}^{H}(P,\Sigma^{n}f)} 
 \cT_{A,H}^{H}(P,\Sigma^{n} N) \cong H(\bm{C}_{A,H}(P,\Sigma^{n}N))
 \]
 is an isomorphism for all $n\in\Z$.

 Under the identification in Lemma \ref{mapping_cone_and_A-hom},
 the short exact sequence
 \[
  0 \rarrow{} \Sigma^{n}N \rarrow{} \Sigma^{n}C_{f} \rarrow{}
 \Sigma^{n+1} M \rarrow{} 0
 \]
 defines a triangle in $\cT_{A,H}^{H}$ and thus we obtain a long exact
 sequence
 \[
  \cdots \rarrow{} \cT_{A,H}^{H}(P,\Sigma^{n-1}C_{f}) \rarrow{}
 \cT_{A,H}^{H}(P,\Sigma^{n}M) \rarrow{} \cT_{A,H}^{H}(P,\Sigma^{n}N)
 \rarrow{}  \cT_{A,H}^{H}(P,\Sigma^{n}C_{f}) \rarrow{} \cdots.
 \]
 Since $f : M \to N$ is a $\Sigma$-quism, $C_{f}$ is $\Sigma$-acyclic.
 Since $P$ is $\Sigma$-Qi-projective, by Lemma
 \ref{Qi-projective_is_stably_trivial}, we have
 $\cT_{A,H}^{H}(P,\Sigma^{n}C_{f})=0$.
 Therefore $\cT_{A,H}^{H}(P,\Sigma^{n}f)$ is an isomorphism for all $n$.
\end{proof}

\begin{corollary}
 An object $P$ in $\bm{C}_{A,H}^{H}$ is $\Sigma$-semiprojective if and
 only if $U(P)$ is projective in $\lMod{\underline{A}}$ and 
 $\bm{C}_{A,H}(P,T)\in\Triv^{\Sigma}_{A,H}$ for
 all $T\in\Triv^{\Sigma}_{A,H}$.
\end{corollary}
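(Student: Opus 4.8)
The plan is to deduce the corollary directly from Lemma \ref{Sigma-semiprojectivity} and Lemma \ref{Qi-projective_is_stably_trivial}, since it is essentially a repackaging of characterizations already proved.

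For the forward implication, I would start from $P$ being $\Sigma$-semiprojective. Then $U(P)$ is projective in $\lMod{\underline{A}}$ by Lemma \ref{Qi-projective_is_A-projective}, and by the equivalence of conditions (\ref{condition:semiprojective}) and (\ref{condition:Qi-projective}) in Lemma \ref{Sigma-semiprojectivity}, $P$ is $\Sigma$-Qi-projective. Feeding this into Lemma \ref{Qi-projective_is_stably_trivial} gives that $\bm{C}_{A,H}(P,T)$ is $\Sigma$-acyclic for every $\Sigma$-acyclic $T$, i.e.~lies in $\Triv^{\Sigma}_{A,H}$; this is exactly the second stated condition.

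For the converse, I would assume $U(P)$ is projective in $\lMod{\underline{A}}$ and that $\bm{C}_{A,H}(P,T)\in\Triv^{\Sigma}_{A,H}$ whenever $T\in\Triv^{\Sigma}_{A,H}$, and aim to verify condition (\ref{condition:dg_projective}) of Lemma \ref{Sigma-semiprojectivity}; since projectivity of $U(P)$ is already in hand, only $\Sigma$-homotopical projectivity of $P$ remains. Given a $\Sigma$-quism $f:M\to N$, I would argue (as in the proof of Lemma \ref{Sigma-semiprojectivity}, via the long exact homology sequence of the triangle built from $C_{\Sigma^{n}f}$) that $C_{f}$ is $\Sigma$-acyclic. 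Applying the exact functor $\bm{C}_{A,H}(P,-)=(\lMod{\underline{A}})(P,-)$ to the defining extension of the mapping cone and identifying the result with a genuine mapping-cone triangle via Lemmas \ref{mapping_cone_and_A-hom} and \ref{suspension_and_Hom} produces a triangle
\[
\bm{C}_{A,H}(P,\Sigma^{n}M)\rarrow{}\bm{C}_{A,H}(P,\Sigma^{n}N)\rarrow{}\bm{C}_{A,H}(P,C_{\Sigma^{n}f})\rarrow{}\bm{C}_{A,H}(P,\Sigma^{n+1}M)
\]
in $\ho(\lMod{H})$. Since $C_{f}$ is $\Sigma$-acyclic, the hypothesis (applied to $T=C_{f}$) says the third term is $\Sigma$-acyclic, and the associated long exact homology sequence then forces $H(\bm{C}_{A,H}(P,\Sigma^{n}f))$ to be an isomorphism for every $n$. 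Hence $P$ is $\Sigma$-homotopically projective, so $\Sigma$-semiprojective.

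I expect no real obstacle: the only things to watch are that the hypothesis about $\bm{C}_{A,H}(P,T)$ needs to be invoked only for the concrete objects $T=C_{f}$ (which are genuinely $\Sigma$-acyclic), and that the identification of the $\bm{C}_{A,H}(P,-)$-image of the mapping-cone extension with an actual triangle in $\ho(\lMod{H})$ uses projectivity of $U(P)$ in an essential way. Both points are already handled inside the proof of Lemma \ref{Sigma-semiprojectivity}, so the argument is short.
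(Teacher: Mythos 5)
Your proof is correct and follows essentially the same route as the paper's: the converse is identical (apply the exact functor $\bm{C}_{A,H}(P,-)$ to the mapping-cone extension of a $\Sigma$-quism, identify it with a triangle via Lemmas \ref{mapping_cone_and_A-hom} and \ref{suspension_and_Hom}, and use the hypothesis on $T=C_{f}$). The only difference is in the forward direction, where the paper simply applies the definition of $\Sigma$-semiprojectivity to the surjective $\Sigma$-quism $T\to 0$, whereas you detour through the equivalence with $\Sigma$-Qi-projectivity and Lemma \ref{Qi-projective_is_stably_trivial}; both are valid and rely on the same established lemmas.
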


\begin{proof}
 Suppose $P$ is $\Sigma$-semiprojective. For
 $T\in\Triv^{\Sigma}_{A,H}$, $T\to 0$ is a surjective $\Sigma$-quism and
 hence induces a $\Sigma$-quism $\bm{C}_{A,H}(P,T)\to 0$.

 Conversely, suppose $U(P)$ is projective and that
 $\bm{C}_{A,H}(P,T)$ is $\Sigma$-acyclic for all
 $T\in\Triv^{\Sigma}_{A,H}$.
 By the previous Lemma, it suffices to show that $P$ is
 $\Sigma$-homotopically projective.
 For a $\Sigma$-quism $f:M\to N$, $C_{f}$ is $\Sigma$-acyclic.
 Since $U(P)$ is projective, we have a triangle
 \[
 \bm{C}_{A,H}(P,M) \rarrow{\bm{C}_{A,H}(P,f)} \bm{C}_{A,H}(P,N)
 \rarrow{\bm{C}_{A,H}(P,j_{f})} 
 \bm{C}_{A,H}(P,C_{f}) \rarrow{\bm{C}_{A,H}(P,\delta_{f})}
 \bm{C}_{A,H}(P,\Sigma(M)). 
 \]
 By assumption $\bm{C}_{A,H}(P,C_{f})$ is $\Sigma$-acyclic. The long
 exact sequence of homology implies that $\bm{C}_{A,H}(P,f)$ is a
 $\Sigma$-quism. 
\end{proof}

%

\subsection{The orthogonality in the category of equivariant modules}
\label{orthogonality}

In this section, we study the orthogonality in $\bm{C}_{A,H}^{H}$ with
respect to the biadditive functor $\Ext^{1}_{\bm{C}_{A,H}^{H}}(-,-)$.
For objects $X,Y\in \bm{C}_{A,H}^{H}$, we denote
$X\perp Y$ when $\Ext_{\bm{C}_{A,H}^{H}}^{1}(X,Y)=0$.
For a class $\bm{T}$ of objects in $\bm{C}_{A,H}^{H}$, define
\begin{align*}
 \bm{T}^{\perp} & = \set{Y}{C\perp Y \text{
 for all } C\in\bm{T}} \\
 {}^{\perp}\bm{T} & = \set{X}{X\perp C \text{
 for all } C\in\bm{T}}.
\end{align*}
We begin with the following general fact.

\begin{lemma}
 \label{Ext_of_suspension}
 If $\bm{T}$ is a class of objects in $\bm{C}_{A,H}^{H}$ closed under
 $\Sigma^{n}$ for all $n\in\Z$, then so are both 
 ${}^{\perp}\bm{T}$ and $\bm{T}^{\perp}$.
\end{lemma}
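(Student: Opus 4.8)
The plan is to reduce the statement about $\bigl({}^{\perp}\bm{T}\bigr)$ and $\bigl(\bm{T}^{\perp}\bigr)$ to the single observation that $\Sigma$ and $\Sigma^{-1}$ are auto-equivalences of $\bm{C}_{A,H}^{H}$ that respect $\Ext^{1}$. Concretely, the first step is to record that the suspension functor $\Sigma(-) = -\otimes(H/(\lambda))$ and its inverse $\Sigma^{-1}(-) = -\otimes(\Ker\varepsilon)$ are exact auto-equivalences of the Abelian category $\bm{C}_{A,H}^{H}$: exactness follows because $\otimes$ over the field $k$ is exact and the $A$- and $H$-module structures are defined levelwise, and the fact that they are mutually inverse equivalences is essentially Lemma \ref{Sigma} (one must note that $\Sigma$ is an equivalence already on the nose on $\bm{C}_{A,H}^{H}$, not merely on $\cT_{A,H}^{H}$; if one prefers, one can argue instead that $\Sigma$ has an exact two-sided adjoint, which is all that is needed below).

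The key step is then the natural isomorphism
\[
 \Ext^{1}_{\bm{C}_{A,H}^{H}}(\Sigma^{n}X, \Sigma^{n}Y) \;\cong\; \Ext^{1}_{\bm{C}_{A,H}^{H}}(X,Y)
\]
for all $n\in\Z$ and all $X,Y$. This is a formal consequence of $\Sigma^{n}$ being an exact equivalence: an exact equivalence sends a short exact sequence representing a class in $\Ext^{1}(X,Y)$ to one representing a class in $\Ext^{1}(\Sigma^{n}X,\Sigma^{n}Y)$, and this assignment is a bijection with inverse induced by $\Sigma^{-n}$. (Alternatively, using $\Ext^{1}(X,Y)\cong\Ext^{1}(\Sigma^{-n}\Sigma^{n}X, Y)$ together with the adjunction isomorphism $\Ext^{1}(\Sigma^{-n}Z,Y)\cong\Ext^{1}(Z,\Sigma^{n}Y)$ coming from the adjoint pair $(\Sigma^{-n},\Sigma^{n})$, one gets the same conclusion; this route only uses that $\Sigma^{n}$ has an exact adjoint.) In particular $X\perp Y$ if and only if $\Sigma^{n}X\perp\Sigma^{n}Y$.

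From here the lemma is immediate. Suppose $X\in{}^{\perp}\bm{T}$; I claim $\Sigma^{n}X\in{}^{\perp}\bm{T}$. Let $C\in\bm{T}$. Since $\bm{T}$ is closed under $\Sigma^{m}$ for all $m$, we have $\Sigma^{-n}C\in\bm{T}$, hence $X\perp\Sigma^{-n}C$; applying the displayed isomorphism with the pair $(X,\Sigma^{-n}C)$ gives $\Sigma^{n}X\perp C$. As $C\in\bm{T}$ was arbitrary, $\Sigma^{n}X\in{}^{\perp}\bm{T}$. The argument for $\bm{T}^{\perp}$ is the mirror image: if $Y\in\bm{T}^{\perp}$ and $C\in\bm{T}$, then $\Sigma^{-n}C\in\bm{T}$ so $\Sigma^{-n}C\perp Y$, and the isomorphism gives $C\perp\Sigma^{n}Y$, whence $\Sigma^{n}Y\in\bm{T}^{\perp}$.

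I expect the only genuine point requiring care — the "main obstacle," though a mild one — is justifying that $\Ext^{1}$ in the Abelian category $\bm{C}_{A,H}^{H}$ is invariant under $\Sigma^{n}$; depending on how much one wants to invoke, this is either "exact equivalences preserve Yoneda $\Ext$" (cleanest, needs $\Sigma$ to be an exact equivalence on $\bm{C}_{A,H}^{H}$ itself) or the adjunction computation using that $\Sigma^{n}$ and $\Sigma^{-n}$ form an adjoint pair of exact functors, which follows from Lemma \ref{suspension_and_Hom} and the identification $\bm{C}_{A,H}^{H}\simeq\lMod{(A\#H)}$. Everything else is purely formal manipulation of the orthogonality relation.
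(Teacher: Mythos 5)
There is a genuine gap, and it sits exactly at the point you flag as the ``main obstacle.'' Your argument rests on the claim that $\Sigma$ and $\Sigma^{-1}$ are mutually inverse exact auto-equivalences of the Abelian category $\bm{C}_{A,H}^{H}$ itself. This is false in general: $\Sigma(M)=M\otimes(H/(\lambda))$ is tensoring with a module of $k$-dimension $\dim_{k}H-1>1$ whenever $\dim_{k}H>2$, so it cannot be an equivalence of the Abelian category (an exact functor of the form $-\otimes V$ is an equivalence only for invertible, hence one-dimensional, $V$). What Lemma \ref{Sigma} actually gives is $\Sigma(\Sigma^{-1}(M))\cong M\oplus(M\otimes Q)$ with $Q$ projective --- an isomorphism in the stable category $\cT_{A,H}^{H}$, not in $\bm{C}_{A,H}^{H}$. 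Your fallback route inherits the same defect twice over: the step $\Ext^{1}(X,Y)\cong\Ext^{1}(\Sigma^{-n}\Sigma^{n}X,Y)$ already presupposes $\Sigma^{-n}\Sigma^{n}X\cong X$, the asserted adjunction $(\Sigma^{-n},\Sigma^{n})$ is nowhere established (it would amount to identifying the dual of $\Ker\varepsilon$ with $H/(\lambda)$ as $H$-modules), and even granting the displayed isomorphism, the deduction ``$X\perp\Sigma^{-n}C$ gives $\Sigma^{n}X\perp C$'' silently replaces $\Sigma^{n}\Sigma^{-n}C$ by $C$ once more. So the key isomorphism $\Ext^{1}(\Sigma^{n}X,\Sigma^{n}Y)\cong\Ext^{1}(X,Y)$ is unjustified, and the rest of the proof has nothing to stand on.

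The paper's proof shows how to work with only the \emph{stable} invertibility of $\Sigma$. Given $M\in{}^{\perp}\bm{T}$, $N\in\bm{T}$ and an extension $0\to N\to E\to\Sigma M\to 0$, one applies the exact functor $\Sigma^{-1}$, then pulls back along the canonical split monomorphism $i:M\to M\oplus(M\otimes Q)\cong\Sigma^{-1}\Sigma M$ to obtain an extension of $M$ by $\Sigma^{-1}N$; this splits because $\Sigma^{-1}N\in\bm{T}$, and the splitting is transported back to the original extension via $\Sigma$ and the retraction $\Sigma\Sigma^{-1}E\cong E\oplus(E\otimes Q)\to E$. The statement for $\bm{T}^{\perp}$ is proved dually, pushing out along the projection $\Sigma^{-1}\Sigma N\to N$. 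If you want to salvage your outline, you would need to replace the clean isomorphism of $\Ext^{1}$-groups by this pullback/pushout correction for the failure of $\Sigma^{-1}\Sigma\cong\mathrm{id}$; as written, the proof does not go through.
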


\begin{proof}
 We first prove that ${}^{\perp}\bm{T}$ is closed under $\Sigma$.
 Suppose $M\in {}^{\perp}\bm{T}$. We need to prove
 $\Ext^{1}_{\bm{C}_{A,H}^{H}}(\Sigma M, N)=0$ for all 
 $N\in\bm{T}$.
 Let
 \[
  0 \rarrow{} N \rarrow{} E \rarrow{} \Sigma M \rarrow{} 0
 \]
 be an extension in $\bm{C}_{A,H}^{H}$.
 Since $\Sigma^{-1}$ is an exact functor, we obtain an extension
 \[
  0 \rarrow{} \Sigma^{-1} N \rarrow{} \Sigma^{-1} E \rarrow{}
 \Sigma^{-1}\Sigma M \rarrow{} 0. 
 \]
 Let
 \[
  i : M \rightarrow M\oplus M\otimes Q \cong \Sigma^{-1}\Sigma M
 \]
 be the inclusion under the isomorphism used in the proof of Lemma
 \ref{Sigma}.
 By taking the pullback along $i$, we obtain an extension
 \[
  0 \rarrow{} \Sigma^{-1} N \rarrow{} i^{*}\Sigma^{-1} E \rarrow{}
  M \rarrow{} 0. 
 \]
 Since $\bm{T}$ is closed under $\Sigma^{-1}$, this sequence splits in
 $\bm{C}_{A,H}^{H}$. 
 Let $s: M\to i^{*}\Sigma^{-1} E$ be a splitting.
 The composition
 \[
  \Sigma M \rarrow{\Sigma s} \Sigma i^{*}\Sigma^{-1} E \rarrow{}
 \Sigma\Sigma^{-1} E \cong E\oplus E\otimes Q \rarrow{} E
 \]
 gives us a splitting we wanted by the commutativity of the following
 diagram
 \[
 \xymatrix{
  0 \ar[r] & \Sigma\Sigma^{-1} N \ar[d] \ar[r]
  & \Sigma i^{*}\Sigma^{-1}E \ar[d] \ar[r] & \Sigma M
  \ar[d] \ar[r] & 0 \\  
  0 \ar[r] & \Sigma\Sigma^{-1} N \ar[d] \ar[r]
  & \Sigma\Sigma^{-1} E \ar[d] \ar[r] & \Sigma\Sigma^{-1}\Sigma M \ar[d]
 \ar[r] & 0 \\
  0 \ar[r] & N \ar[r] & E \ar[r] & \Sigma M \ar[r] & 0.
 }
 \] 
 The same argument can be used to show that ${}^{\perp}\bm{T}$ is closed
 under $\Sigma^{-1}$ by switching $\Sigma$ and $\Sigma^{-1}$. The detail
 is omitted.

 We next show that $\bm{T}^{\perp}$ is closed under $\Sigma$.
 The argument is essentially dual to the above case.
 Suppose $N\in \bm{T}^{\perp}$. We need to show that
 $\Ext^{1}_{\bm{C}_{A,H}^{H}}(M,\Sigma N)=0$ for all $M\in\bm{T}$.
 Let
 \begin{equation}
  0 \rarrow{} \Sigma N \rarrow{} E \rarrow{} M \rarrow{} 0
   \label{equation:extension_SigmaN_M}
 \end{equation}
 be an extension. Apply $\Sigma^{-1}$ to obtain an extension
 \[
  0 \rarrow{} \Sigma^{-1}\Sigma N \rarrow{} \Sigma^{-1}E \rarrow{}
 \Sigma^{-1}M \rarrow{} 0. 
 \]
 Let $p: \Sigma^{-1}\Sigma N\cong N\oplus N\otimes Q \to N$ be the
 projection. By taking the pushout along $p$, we obtain an extension
 \[
  0 \rarrow{} N \rarrow{} p_{*}\Sigma^{-1}E \rarrow{}
 \Sigma^{-1}M \rarrow{} 0. 
 \]
 Since $\bm{T}$ is closed under $\Sigma^{-1}$, this extension splits. 
 Let $q: p_{*}\Sigma^{-1}E\to N$ be a splitting. Then the composition
 \[
  E \rarrow{} \Sigma\Sigma^{-1}E \rarrow{} \Sigma p_{*}\Sigma^{-1}E
 \rarrow{\Sigma q} \Sigma N
 \]
 is a splitting of (\ref{equation:extension_SigmaN_M}). And we have
 $\Ext^{1}_{\bm{C}_{A,H}^{H}}(M,\Sigma N)=0$ for all $M\in\bm{T}$.
 The case of $\Sigma^{-1}$ on $\bm{T}^{\perp}$ is analogous and is
 omitted. 
\end{proof}

\begin{corollary}
 \label{from_perp_to_null_homotopic}
 Let $\bm{T}$ be a class of objects closed under $\Sigma^{n}$ for all
 $n\in\Z$.
 Suppose $P\in {}^{\perp}\bm{T}$ and $T\in \bm{T}$.
 Then $\cT_{A,H}^{H}(P,\Sigma^{n}T)=0$ for all $n\in\Z$.
\end{corollary}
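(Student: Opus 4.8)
The plan is to prove directly that every morphism $f\colon P\to\Sigma^{n}T$ in $\bm{C}_{A,H}^{H}$ is homotopic to $0$; since $n\in\Z$ and $f$ are arbitrary, this yields $\cT_{A,H}^{H}(P,\Sigma^{n}T)=0$ for all $n$. The idea is to convert the hypothesis $P\in{}^{\perp}\bm{T}$, which is a statement in the abelian category $\bm{C}_{A,H}^{H}$, into the desired vanishing in the stable category $\cT_{A,H}^{H}$ by a one-step dimension shift: we realize a given $f$ through an extension of $P$ which the orthogonality forces to split.

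Concretely, I would fix $n$ and first note that $\Sigma^{n-1}T\in\bm{T}$, since $\bm{T}$ is closed under $\Sigma^{m}$ for every $m\in\Z$; hence $\Ext^{1}_{\bm{C}_{A,H}^{H}}(P,\Sigma^{n-1}T)=0$. Next I would take the extension defining the suspension of $\Sigma^{n-1}T$,
\[
 0\rarrow{}\Sigma^{n-1}T\rarrow{i_{\Sigma^{n-1}T}}C(\Sigma^{n-1}T)\rarrow{p}\Sigma^{n}T\rarrow{}0,
\]
and pull it back along $f$. This produces a short exact sequence $0\to\Sigma^{n-1}T\to E\xrightarrow{\pi}P\to 0$ in $\bm{C}_{A,H}^{H}$ together with a morphism $\bar f\colon E\to C(\Sigma^{n-1}T)$ satisfying $p\circ\bar f=f\circ\pi$. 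By the vanishing of $\Ext^{1}$, this sequence splits; choosing a section $s$ of $\pi$ and setting $g=\bar f\circ s$ gives $p\circ g=f$, so $f$ factors through the object $C(\Sigma^{n-1}T)$.

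It then remains to see that $C(\Sigma^{n-1}T)$ is a zero object of $\cT_{A,H}^{H}$. Writing $M=\Sigma^{n-1}T$ and applying the mapping-cone construction to $1_{M}\colon M\to M$: the pushout of $0\to M\xrightarrow{i_{M}}C(M)\xrightarrow{p_{M}}\Sigma M\to 0$ along $1_{M}$ is that very sequence, so $M\xrightarrow{1_{M}}M\xrightarrow{i_{M}}C(M)\xrightarrow{p_{M}}\Sigma M$ is a standard triangle in $\cT_{A,H}^{H}$; since its first map is an isomorphism, $C(M)\cong 0$ in the triangulated category $\cT_{A,H}^{H}$. A morphism factoring through a zero object is zero there, so $[f]=0$, i.e.\ $f\simeq 0$. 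I do not anticipate a real obstacle: the content is entirely in choosing the right extension to pull back and checking that the resulting split sequence yields the claimed factorization through $C(\Sigma^{n-1}T)$.
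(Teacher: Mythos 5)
Your argument is correct and reaches the conclusion by a genuinely different route from the paper. The paper shifts $P$ \emph{up}: it invokes Lemma \ref{Ext_of_suspension} to get $\Sigma P\in{}^{\perp}\bm{T}$, observes that the mapping-cone extension $0\to\Sigma^{n}T\to C_{\varphi}\to\Sigma P\to 0$ therefore splits, and concludes via Lemma \ref{mapping_cone_of_null_homotopic_map} (a map is null-homotopic iff its cone extension is trivial). You instead shift $T$ \emph{down}: you use only the hypothesis that $\bm{T}$ is closed under desuspension to get $\Ext^{1}(P,\Sigma^{n-1}T)=0$, pull back the cone-of-identity extension along $f$, and factor $f$ through $C(\Sigma^{n-1}T)$, which is zero in $\cT_{A,H}^{H}$. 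Your route is more self-contained --- it bypasses both Lemma \ref{Ext_of_suspension} (whose proof is itself a nontrivial $\Sigma$/$\Sigma^{-1}$ juggling act) and Lemma \ref{mapping_cone_of_null_homotopic_map}, needing only the elementary fact that the cone of an identity is a zero object of the triangulated category. The paper's route, by contrast, reuses machinery it has already built and is a one-line deduction given those lemmas.

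One small imprecision to patch: for $n\le 0$ the cokernel of $i_{\Sigma^{n-1}T}$ is $\Sigma(\Sigma^{n-1}T)=\Sigma^{n-1}T\otimes(H/(\lambda))$, which is not literally $\Sigma^{n}T$ (the paper's $\Sigma^{-1}$ is $-\otimes\Ker\varepsilon$, and $\Sigma\Sigma^{-1}\cong\mathrm{id}\oplus(-\otimes Q)$ only up to a projective summand, cf.\ Lemma \ref{Sigma}). The fix is routine: run your argument with target $\Sigma(\Sigma^{n-1}T)$, precomposing with the split inclusion $\Sigma^{n}T\hookrightarrow\Sigma(\Sigma^{n-1}T)$; since these objects are isomorphic in $\cT_{A,H}^{H}$ and the pullback step only needs $\Sigma^{n-1}T\in\bm{T}$, the vanishing of $\cT_{A,H}^{H}(P,\Sigma^{n}T)$ follows. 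This is the same level of care the paper itself exercises around $\Sigma\Sigma^{-1}$, so it is a footnote rather than a gap.
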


\begin{proof}
 Let $T$ be an object in $\bm{T}$.
 Under the identification
 \[
  \cT_{A,H}^{H}(P,\Sigma^{n}T) = \bm{C}_{A,H}^{H}(P,\Sigma^{n}T)/_{\simeq},
 \]
 it suffices to show that any morphism $\varphi : P\to\Sigma^{n}T$ is
 null homotopic.
 By Lemma \ref{Ext_of_suspension}, $\Sigma P\in {}^{\perp}\bm{T}$.
 In particular, the extension
 \[
  0 \rarrow{} \Sigma^{n}T \rarrow{} C_{\varphi} \rarrow{} \Sigma P
 \rarrow{} 0
 \]
 splits. By Corollary \ref{mapping_cone_of_null_homotopic_map}, 
 $\varphi\simeq 0$. 
\end{proof}

Recall from Lemma \ref{Triv*_is_thick} that
the subcategory $\Triv_{A,H}^{\Sigma}$ of $\Sigma$-acyclic objects is a
thick subcategory of $\bm{C}_{A,H}^{H}$. 
By the identification $\bm{C}_{A,H}^{H}\cong \lMod{(A\# H)}$ of
Proposition \ref{equivariant_module_as_module_over_smash_product},
$\bm{C}_{A,H}^{H}$ is an Abelian category with enough projectives.
Thus, by Lemma \ref{Abelian_Hovey_triple_generated_by_Triv}, the triple
$({}^{\perp}\Triv_{A,H}^{\Sigma},\Triv_{A,H}^{\Sigma},\bm{C}_{A,H}^{H})$ is
a Hovey triple if 
\begin{align*}
 {}^{\perp}\Triv_{A,H}^{\Sigma}\cap\Triv_{A,H}^{\Sigma} & =
 \category{Prj}(\bm{C}_{A,H}^{H}) \\
 \left({}^{\perp}\Triv_{A,H}^{\Sigma}\right)^{\perp} & =
 \Triv_{A,H}^{\Sigma}. 
\end{align*}

In order to prove these statements, we first need to understand 
${}^{\perp}\Triv_{A,H}^{\Sigma}$.

\begin{proposition}
 \label{orthogonal_to_TrivH}
 We have
 \[
 {}^{\perp}\Triv_{A,H}^{\Sigma} =
 \SemiPrj_{\Sigma}. 
 \]
\end{proposition}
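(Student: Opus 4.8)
The statement equates two full subcategories of $\bm{C}_{A,H}^{H}$, and the plan is to prove the two inclusions separately, recognizing $\Sigma$-semiprojectivity through the equivalent conditions of Lemma~\ref{Sigma-semiprojectivity} and controlling $\Ext^{1}$ through the mapping-cone calculus of \S\ref{projectives}.

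\emph{The inclusion ${}^{\perp}\Triv_{A,H}^{\Sigma}\subseteq\SemiPrj_{\Sigma}$.} Let $P\in{}^{\perp}\Triv_{A,H}^{\Sigma}$. By Lemma~\ref{Sigma-semiprojectivity} it suffices to show that $U(P)$ is projective in $\lMod{\underline{A}}$ and that $P$ is $\Sigma$-homotopically projective. For the first, take a surjection $\pi\colon M\to N$ in $\lMod{\underline{A}}$ and apply the right adjoint $E$ of Proposition~\ref{P_is_right_adjoint_to_U}; since $E$ is exact, $0\to E(\Ker\pi)\to E(M)\to E(N)\to0$ is exact in $\bm{C}_{A,H}^{H}$, and $E(\Ker\pi)\in\Triv_{A,H}^{\Sigma}$ by Lemma~\ref{P_is_acyclic}. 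Hence $\Ext^{1}_{\bm{C}_{A,H}^{H}}(P,E(\Ker\pi))=0$, so $\bm{C}_{A,H}^{H}(P,E(M))\to\bm{C}_{A,H}^{H}(P,E(N))$ is onto, and by the adjunction $U\dashv E$ so is $(\lMod{\underline{A}})(U(P),M)\to(\lMod{\underline{A}})(U(P),N)$; thus $U(P)$ is projective and $\bm{C}_{A,H}(P,-)\colon\bm{C}_{A,H}^{H}\to\lMod{H}$ is exact. For the second, let $f\colon M\to N$ be a $\Sigma$-quism. The long exact $H$-sequence of the standard triangle $M\xrightarrow{f}N\to C_{f}\to\Sigma M$ gives $H(\Sigma^{n}C_{f})=0$ for all $n$, so $C_{f}\in\Triv_{A,H}^{\Sigma}$; by Corollary~\ref{from_perp_to_null_homotopic}, $\cT_{A,H}^{H}(P,\Sigma^{n}C_{f})=0$ for all $n$, i.e.\ $\bm{C}_{A,H}(P,C_{f})$ is $\Sigma$-acyclic. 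Applying the exact functor $\bm{C}_{A,H}(P,-)$ to $0\to N\to C_{f}\to\Sigma M\to0$ and using Lemmas~\ref{mapping_cone_and_A-hom} and~\ref{suspension_and_Hom} to identify $\bm{C}_{A,H}(P,C_{f})$ with the mapping cone of $\bm{C}_{A,H}(P,f)$, the long exact $H$-sequence shows $H(\Sigma^{n}\bm{C}_{A,H}(P,f))$ is an isomorphism for every $n$; hence $P$ is $\Sigma$-homotopically projective and therefore $\Sigma$-semiprojective.

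\emph{The inclusion $\SemiPrj_{\Sigma}\subseteq{}^{\perp}\Triv_{A,H}^{\Sigma}$.} Let $P\in\SemiPrj_{\Sigma}$ and $T\in\Triv_{A,H}^{\Sigma}$, and consider an extension $0\to T\xrightarrow{f}E\xrightarrow{g}P\to0$ in $\bm{C}_{A,H}^{H}$; we must show it splits. Since $U(P)$ is projective (Lemma~\ref{Qi-projective_is_A-projective}), $\bm{C}_{A,H}(P,-)$ is exact, giving a short exact sequence of $H$-modules $0\to\bm{C}_{A,H}(P,T)\to\bm{C}_{A,H}(P,E)\xrightarrow{g_{*}}\bm{C}_{A,H}(P,P)\to0$, and the extension splits if and only if $1_{P}$ lies in the image of $g_{*}$ on $H$-invariants, i.e.\ $g$ admits a section in $\bm{C}_{A,H}^{H}$ (using Corollary~\ref{invariant_and_equivariant}). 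Because $T\to0$ is a surjective $\Sigma$-quism and $P$ is $\Sigma$-semiprojective, $\bm{C}_{A,H}(P,T)$ is $\Sigma$-acyclic, so the problem reduces to the sub-lemma: \emph{if $K$ is a $\Sigma$-acyclic $H$-module and $0\to K\to M\xrightarrow{q}N\to0$ is exact in $\lMod{H}$, then $q\colon M^{H}\to N^{H}$ is surjective}. To prove it, note the sequence is $k$-split, hence by Lemma~\ref{Hopf_mapping_cone} it gives a triangle $K\to M\xrightarrow{q}N\xrightarrow{\partial}\Sigma K$ in $\cT_{k,H}^{H}$. For $\alpha\in N^{H}=\bm{C}_{k,H}^{H}(k,N)$, the composite $\partial\circ[\alpha]$ lies in $\cT_{k,H}^{H}(k,\Sigma K)\cong H(\Sigma K)=0$ (Example~\ref{H-homotopy_class_by_homology}), so $[\alpha]$ factors through $q$ in $\cT_{k,H}^{H}$: there is $\beta\colon k\to M$ with $q\beta\simeq\alpha$. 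Then $q\beta-\alpha$ is null-homotopic, hence equals $\chi\circ(1\otimes\lambda)$ for some $\chi\colon C(k)\to N$; since $C(k)=k\otimes H\cong H$ is a free $H$-module (Lemmas~\ref{tensoring_with_H} and~\ref{switching_lambda}) and $q$ is surjective, $\chi$ lifts to $\tilde\chi\colon C(k)\to M$, and $\beta':=\beta-\tilde\chi\circ(1\otimes\lambda)$ is an honest lift of $\alpha$. Applying this to the displayed sequence lifts $1_{P}$ to a section of $g$.

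\emph{Expected main obstacle.} The first inclusion is close to a Hopfological transcription of the dg argument and should go through smoothly given \S\ref{projectives}. The delicate point is the second inclusion: the triangulated argument only yields a \emph{homotopy} section of $g$, and upgrading it to an honest section cannot be done directly in $\bm{C}_{A,H}^{H}$, since the cone $C(P)=P\otimes H$ need not be projective there — the $A$-action destroys the isomorphism $P\otimes H\cong C(U(P))$ that is available when $A=k$. The remedy, and the only genuinely new computation, is to push the extension problem into $\lMod{H}$ via the exact functor $\bm{C}_{A,H}(P,-)$, where the relevant cone $C(k)\cong H$ \emph{is} free; verifying the sub-lemma cleanly is the step I expect to require the most care.
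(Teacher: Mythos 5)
Your proof is correct and follows essentially the same route as the paper: the first inclusion via the adjunction $U\dashv E$, the acyclicity of $E(-)$, and the $\Sigma$-acyclicity of $\bm{C}_{A,H}(P,C_{f})$ obtained from Corollary \ref{from_perp_to_null_homotopic}; the second by reducing the splitting of $0\to T\to E\to P\to 0$ to surjectivity of $\bm{C}_{A,H}^{H}(P,-)$ applied to the surjective $\Sigma$-quism $E\to P$. Your hand-rolled sub-lemma on invariants is a triangulated re-proof of exactly the special case of Lemma \ref{surjectivity} that the paper uses (via the equivalence with $\Sigma$-Qi-projectivity in Lemma \ref{Sigma-semiprojectivity}), so the content coincides.
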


\begin{proof}
 Let $P$ be an object of
 ${}^{\perp}\Triv_{A,H}^{\Sigma}$.
 Let us first show that $U(P)$ is projective as an
 $\underline{A}$-module.
 Consider a diagram of $\underline{A}$-modules
 \[
 \xymatrix{
  &  U(P) \ar[d]^{\tilde{f}} & \\
  M \ar[r]^{\varphi} & N \ar[r] & 0.
 }
 \]
 By taking the right adjoint, we have a diagram of
 $H$-modules
 \[
 \xymatrix{
  & P \ar[d]^{f} & \\
  E(M) \ar[r]^{E(\varphi)} & E(N) \ar[r] & 0
 }
 \]
 by Lemma \ref{P_is_right_adjoint_to_U}.
 Since $E$ is an exact functor, the bottom sequence is exact.
 By taking the pullback, we obtain a map of short exact sequences in
 $\bm{C}_{A,H}^{H}$ 
 \[
 \xymatrix{
  0 \ar[r] & E(\Ker\varphi) \ar@{=}[d]
  \ar[r] & E(M)\times_{E(N)} P \ar[d]
  \ar[r] & P 
  \ar[d]^{\tilde{f}} \ar[r] & 0 \\ 
  0 \ar[r] & E(\Ker\varphi) \ar[r] & E(M) \ar[r]^{E(\varphi)}
  & E(N) \ar[r] & 0. 
 }
 \]

 By Lemma \ref{P_is_acyclic}, $E(\Ker\varphi)$ belongs to
 $\Triv_{A,H}^{\Sigma}$, which implies
 \[
 \Ext^{1}_{\bm{C}_{A,H}^{H}}(P,E(\Ker\varphi))=0 
 \]
 by assumption. In other words, the top row splits in
 $\bm{C}_{A,H}^{H}$. 
 Let $s:P\to E(M)\times_{E(N)}P$ be a
 splitting. Then the adjoint to the composition
 \[
 P \rarrow{s} E(M)\times_{E(N)}P \rarrow{}
 E(M) 
 \]
 gives us a lift $U(P)\to M$ of $f$.
 Hence $U(P)$ is projective as an $\underline{A}$-module.

 By Lemma \ref{Sigma-semiprojectivity}, it remains to show that
 $P$ is $\Sigma$-homotopically projective.
 Let $f:M\to N$ be a $\Sigma$-quism in $\bm{C}_{A,H}^{H}$.
 By Lemma \ref{mapping_cone_and_A-hom}, It suffices to show that
 $\bm{C}_{A,H}(P,\Sigma^{n}C_{f})$ is acyclic for any $n\in\Z$.
 Under the identification
 \[
  H(\bm{C}_{A,H}(P,\Sigma^{n}C_{f}))\cong
 \cT_{A,H}^{H}(P,\Sigma^{n}C_{f}) = 
 \bm{C}_{A,H}^{H}(P,\Sigma^{n}C_{f})/_{\simeq}, 
 \]
 we are going to show that any morphism $\varphi: P\to\Sigma^{n}C_{f}$ is
 null homotopic.
 Consider the extension associated with the mapping cone of $\varphi$
 \[
  0 \rarrow{} \Sigma^{n}C_{f} \rarrow{} C_{\varphi} \rarrow{} \Sigma(P)
 \rarrow{} 0. 
 \]
 By assumption, $\Sigma^{n}C_{f}$ is acyclic for all $n$. 
 Since $P$ belongs to ${}^{\perp}\Triv_{A,H}^{\Sigma}$, so does $\Sigma P$
 by Lemma \ref{Ext_of_suspension}, which implies that this extension
 splits. 
 By Lemma \ref{mapping_cone_of_null_homotopic_map}, we have $f\simeq 0$.

 Conversely, suppose that $P$ is $\Sigma$-semiprojective.
 For an extension
 \begin{equation}
  0 \rarrow{} T \rarrow{j} E \rarrow{p} P \rarrow{} 0
   \label{equation:extension_by_Triv}
 \end{equation}
 with $T\in\Triv_{A,H}^{\Sigma}$, since $P$ is projective as an
 $\underline{A}$-module, 
 it defines a distinguished triangle
 \[
  T \rarrow{[j]} E \rarrow{[p]} P \rarrow{\delta} \Sigma T
 \]
 in $\cT_{A,H}^{H}$ by Lemma \ref{Hopf_mapping_cone}.
 The induced long exact sequence of homology
 \[
 \cdots \rarrow{} H(\Sigma^{n}T) \rarrow{j_{*}} H(\Sigma^{n}E)
 \rarrow{p_{*}} 
 H(\Sigma^{n}P) \rarrow{\delta} H(\Sigma^{n+1} T) \rarrow{}
 H(\Sigma^{n+1}E) \rarrow{} \cdots
 \]
 and the assumption that $T\in\Triv_{A,H}^{\Sigma}$ implies that $p:E\to P$
 is a surjective $\Sigma$-quism.
 
 By Lemma \ref{Sigma-semiprojectivity}, the induced map
 \[
 \bm{C}_{A,H}^{H}(P,p): \bm{C}_{A,H}^{H}(P,E) \rarrow{}
 \bm{C}_{A,H}^{H}(P,P) 
 \]
 is surjective. Any $s\in\bm{C}_{A,H}^{H}(P,E)$ with
 $\bm{C}_{A,H}^{H}(P,p)(s)=1_{P}$ is a splitting of
 (\ref{equation:extension_by_Triv}), which implies that
 $P\in{}^{\perp}\Triv_{A,H}^{\Sigma}$.
\end{proof}

\begin{corollary}
 The subcategory $\SemiPrj_{\Sigma}$ is closed under
 $\Sigma^{n}$ for all $n\in\Z$. 
\end{corollary}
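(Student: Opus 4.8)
The plan is to deduce this immediately from the identification established in Proposition \ref{orthogonal_to_TrivH} together with the stability of left orthogonals under suspension proved in Lemma \ref{Ext_of_suspension}. First I would observe that the class $\Triv_{A,H}^{\Sigma}$ is itself closed under $\Sigma^{n}$ for all $n\in\Z$: this is immediate from Definition \ref{def:quism_for_H-module}, since $M$ being $\Sigma$-acyclic means $H(\Sigma^{n}M)=0$ for every $n$, and this condition is manifestly invariant under replacing $M$ by $\Sigma^{k}M$; it is also recorded in the proof of Lemma \ref{Triv*_is_thick}.

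Next I would apply Lemma \ref{Ext_of_suspension} with $\bm{T}=\Triv_{A,H}^{\Sigma}$. Since $\bm{T}$ is closed under $\Sigma^{n}$ for all $n\in\Z$, the lemma yields that ${}^{\perp}\Triv_{A,H}^{\Sigma}$ is closed under $\Sigma^{n}$ for all $n\in\Z$. Finally, Proposition \ref{orthogonal_to_TrivH} identifies ${}^{\perp}\Triv_{A,H}^{\Sigma}$ with $\SemiPrj_{\Sigma}$, so the latter inherits the same closure property, which is exactly the assertion.

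There is no real obstacle here: the substantive work has already been carried out in Lemma \ref{Ext_of_suspension} and Proposition \ref{orthogonal_to_TrivH}, and the corollary is simply the combination of the two. Alternatively, one could argue directly from the equivalent characterizations in Lemma \ref{Sigma-semiprojectivity}, since the defining condition of $\Sigma$-semiprojectivity is visibly phrased in a way that is symmetric under applying $\Sigma^{n}$, but routing through the orthogonality description $\SemiPrj_{\Sigma}={}^{\perp}\Triv_{A,H}^{\Sigma}$ is the cleanest and keeps the argument to a single line.
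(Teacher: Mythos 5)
Your argument is exactly the intended one: the paper states this corollary immediately after Proposition \ref{orthogonal_to_TrivH} precisely so that it follows from the identification $\SemiPrj_{\Sigma}={}^{\perp}\Triv_{A,H}^{\Sigma}$ combined with Lemma \ref{Ext_of_suspension} applied to $\bm{T}=\Triv_{A,H}^{\Sigma}$, which is closed under $\Sigma^{n}$ by definition. Your proposal is correct and matches the paper's (implicit) proof.
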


\begin{proposition}
 We have
 \[
  {}^{\perp}\Triv_{A,H}^{\Sigma} \cap \Triv_{A,H}^{\Sigma} =
 \Prj(\bm{C}_{A,H}^{H}), 
 \]
 where the right hand side is the full subcategory of projective objects
 in $\bm{C}_{A,H}^{H}$.
\end{proposition}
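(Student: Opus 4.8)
The plan is to prove the two inclusions separately, using the identification ${}^{\perp}\Triv_{A,H}^{\Sigma}=\SemiPrj_{\Sigma}$ of Proposition \ref{orthogonal_to_TrivH} and the fact, recalled above, that $\bm{C}_{A,H}^{H}\cong\lMod{(A\#H)}$ is an Abelian category with enough projectives.

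For the inclusion $\Prj(\bm{C}_{A,H}^{H})\subseteq{}^{\perp}\Triv_{A,H}^{\Sigma}\cap\Triv_{A,H}^{\Sigma}$, a projective object $Q$ trivially satisfies $\Ext^{1}_{\bm{C}_{A,H}^{H}}(Q,-)=0$, hence $Q\in{}^{\perp}\Triv_{A,H}^{\Sigma}$; the real content is that $Q$ is $\Sigma$-acyclic. I would unwind the equivalence $\bm{C}_{A,H}^{H}\cong\lMod{(A\#H)}$ to check that, for each $x\in A_{0}$, the representable module $(A\#H)(x,-)=\bigoplus_{z}A(x,z)\otimes H$ has underlying $H$-module $\bigl(\bigoplus_{z}A(x,z)\bigr)\otimes H$ with the diagonal action, $H$ acting on itself by the left regular representation; by Lemma \ref{switching_lambda} this is isomorphic to $H\otimes\bigl(\bigoplus_{z}A(x,z)\bigr)$, which is a free $H$-module by Lemma \ref{tensoring_with_H}. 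Since any projective $Q$ is a retract of a direct sum of such representables, $Q$ is a projective $H$-module; then $\Sigma^{n}Q=Q\otimes(H/(\lambda))^{\otimes n}$ for $n\geq 0$ (and $Q\otimes(\Ker\varepsilon)^{\otimes(-n)}$ for $n<0$) is again projective as an $H$-module, since the tensor product of a projective $H$-module with any $H$-module is projective (the corollary to Lemma \ref{tensoring_with_H}). Hence $H(\Sigma^{n}Q)=0$ for all $n$ by Example \ref{projective_module_is_acyclic}, so $Q\in\Triv_{A,H}^{\Sigma}$.

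For the reverse inclusion, let $P\in{}^{\perp}\Triv_{A,H}^{\Sigma}\cap\Triv_{A,H}^{\Sigma}$. By Proposition \ref{orthogonal_to_TrivH} together with Lemma \ref{Qi-projective_is_A-projective}, $U(P)$ is projective in $\lMod{\underline{A}}$. Choose a short exact sequence $0\to K\to Q\to P\to 0$ in $\bm{C}_{A,H}^{H}$ with $Q$ projective. Applying $U$ and using projectivity of $U(P)$, this sequence is $A$-split, so by Lemma \ref{Hopf_mapping_cone} it gives rise to a distinguished triangle $K\to Q\to P\to\Sigma K$ in $\cT_{A,H}^{H}$. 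Applying the canonical homological functor $H$ to the $\Sigma^{n}$-shifts of this triangle produces long exact sequences; since $Q\in\Triv_{A,H}^{\Sigma}$ by the previous paragraph and $P\in\Triv_{A,H}^{\Sigma}$ by hypothesis, these force $H(\Sigma^{n}K)=0$ for all $n$, i.e.\ $K\in\Triv_{A,H}^{\Sigma}$. Then $\Ext^{1}_{\bm{C}_{A,H}^{H}}(P,K)=0$ because $P\in{}^{\perp}\Triv_{A,H}^{\Sigma}$, so $0\to K\to Q\to P\to 0$ splits and $P$ is a retract of the projective $Q$, hence projective.

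I expect the main obstacle to be securing the $A$-splitness of the projective presentation $0\to K\to Q\to P\to 0$: this is precisely what upgrades a short exact sequence to a distinguished triangle, so that the canonical homological functor $H$ (which is homological on $\cT_{A,H}^{H}$ but not exact on $\bm{C}_{A,H}^{H}$) can be used to control $K$, and it is available only because membership in ${}^{\perp}\Triv_{A,H}^{\Sigma}$ forces $U(P)$ to be $\underline{A}$-projective. A secondary bookkeeping point is the identification of the underlying $H$-module of a free $(A\#H)$-module, which is what makes projective objects $\Sigma$-acyclic in the first inclusion.
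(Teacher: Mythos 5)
Your proof is correct, and its overall architecture matches the paper's: both inclusions are proved separately, and the containment $\Prj(\bm{C}_{A,H}^{H})\subseteq{}^{\perp}\Triv_{A,H}^{\Sigma}\cap\Triv_{A,H}^{\Sigma}$ is obtained in essentially the same way in both places, namely by exhibiting a projective object as a retract of something of the form $(\text{free }\underline{A}\text{-module})\otimes H$, which is a free $H$-module by Lemmas \ref{switching_lambda} and \ref{tensoring_with_H} and hence $\Sigma$-acyclic; the paper packages this as ``$P$ is a direct summand of $C(F)$'' and cites Example \ref{suspension_of_cone_is_acyclic}, while you unwind the representables of $A\#H$ by hand, but the content is identical.

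Where you genuinely diverge is the reverse inclusion. The paper takes a projective surjection $Q\to P$, observes that it is a surjective $\Sigma$-quism because both $Q$ and $P$ are $\Sigma$-acyclic, and then invokes the equivalence of $\Sigma$-semiprojectivity with $\Sigma$-Qi-projectivity (Lemma \ref{Sigma-semiprojectivity}) to lift $1_{P}$ through $\bm{C}_{A,H}^{H}(P,Q)\to\bm{C}_{A,H}^{H}(P,P)$ and split the surjection directly. You instead keep track of the kernel: you use the $\underline{A}$-projectivity of $U(P)$ to see that $0\to K\to Q\to P\to 0$ is $A$-split, pass to the associated triangle via Lemma \ref{Hopf_mapping_cone}, deduce $K\in\Triv_{A,H}^{\Sigma}$ from the long exact sequence of the canonical homological functor, and then split the extension by the orthogonality $\Ext^{1}_{\bm{C}_{A,H}^{H}}(P,K)=0$. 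Both routes are valid and of comparable length. Yours stays entirely inside the cotorsion-pair formalism (everything reduces to an $\Ext^{1}$-vanishing), at the cost of needing the $A$-splitness of the presentation to make the triangle available; the paper's route avoids discussing the kernel altogether but leans on the harder equivalence of the three projectivity notions. Your closing remark correctly identifies the $A$-splitness of the projective presentation as the load-bearing step of your variant, and it is indeed supplied by Proposition \ref{orthogonal_to_TrivH} together with Lemma \ref{Qi-projective_is_A-projective}.
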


\begin{proof}
 Since $\Triv_{A,H}^{\Sigma}\subset \bm{C}_{A,H}^{H}$, we have
 \[
  {}^{\perp}\Triv_{A,H}^{\Sigma} \supset {}^{\perp}\bm{C}_{A,H}^{H} =
 \Prj(\bm{C}_{A,H}^{H}). 
 \]
 Let $P$ be a projective object in $\bm{C}_{A,H}^{H}$. 
 We show that $P$ is $\Sigma$-acyclic. 
 Under the identification
 \[
  \bm{C}_{A,H}^{H} \cong \lMod{A\# H},
 \]
 $P$ is a direct summand of a free $A\# H$-module, i.e.~there exists a
 free $\underline{A}$-module $F$ such that $P$ is a direct summand of $C(F)$.
 By Corollary \ref{suspension_of_cone_is_acyclic}, $C(F)$ belongs to
 $\Triv_{A,H}^{\Sigma}$, and so does $P$. 
 And we have
 \[
  \Prj (\bm{C}_{A,H}^{H}) \subset {}^{\perp}\Triv_{A,H}^{\Sigma} \cap
 \Triv_{A,H}^{\Sigma}. 
 \]

 Conversely suppose that
 \[
 P\in {}^{\perp}\Triv_{A,H}^{\Sigma}\cap\Triv^{H}_\Sigma  =
 \SemiPrj_{\Sigma}\cap\Triv_{A,H}^{\Sigma}. 
 \]
 Since $\bm{C}_{A,H}^{H}\cong\lMod{A\# H}$ has enough projectives, we may
 take a projective object $Q$ in $\bm{C}_{A,H}^{H}$ and a surjection
 \[
  Q \rarrow{\varphi} P \rarrow{} 0.
 \]
 Since $P$ is $\Sigma$-acyclic, this is a surjective $\Sigma$-quism.
 By Lemma \ref{Sigma-semiprojectivity}, $P$ is
 $\Sigma$-Qi-projective and the induced map
 \[
 \bm{C}_{A,H}^{H}(P,f) : \bm{C}_{A,H}^{H}(P,Q) \rarrow{}
 \bm{C}_{A,H}^{H}(P,P)  
 \]
 is surjective, which implies that $Q\rarrow{p}P$ has a section.
 As a direct summand of a projective object, $P$ is projective.
%
\end{proof}

\begin{proposition}
 We have
 \[
  \left({}^{\perp}\Triv_{A,H}^{\Sigma}\right)^{\perp} = \Triv_{A,H}^{\Sigma}. 
 \]
\end{proposition}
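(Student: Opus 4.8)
The plan is to prove the two inclusions separately. One direction, $\Triv_{A,H}^{\Sigma}\subseteq\left({}^{\perp}\Triv_{A,H}^{\Sigma}\right)^{\perp}$, is essentially formal: by Proposition \ref{orthogonal_to_TrivH} we have ${}^{\perp}\Triv_{A,H}^{\Sigma}=\SemiPrj_{\Sigma}$, and so we must show that every $\Sigma$-acyclic object $T$ satisfies $\Ext^{1}_{\bm{C}_{A,H}^{H}}(P,T)=0$ for every $\Sigma$-semiprojective $P$. Given an extension $0\to T\to E\to P\to 0$, the fact that $U(P)$ is projective (Lemma \ref{Qi-projective_is_A-projective}) makes it $A$-split, so by Lemma \ref{Hopf_mapping_cone} it gives a distinguished triangle $T\to E\to P\xrightarrow{\delta}\Sigma T$ in $\cT_{A,H}^{H}$. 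Since $T$ is $\Sigma$-acyclic, by Corollary \ref{from_perp_to_null_homotopic} (applied to $\bm{T}=\Triv_{A,H}^{\Sigma}$, which is closed under $\Sigma^{n}$) we get $\cT_{A,H}^{H}(P,\Sigma T)=0$, so $\delta$ is null-homotopic; equivalently $[E]\to[P]$ admits a homotopy section, and then the same argument as in the converse half of the proof of Proposition \ref{orthogonal_to_TrivH} (surjectivity of $\bm{C}_{A,H}^{H}(P,E)\to\bm{C}_{A,H}^{H}(P,P)$, using $\Sigma$-Qi-projectivity via Lemma \ref{Sigma-semiprojectivity}) produces an honest splitting of the extension.

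For the reverse inclusion $\left({}^{\perp}\Triv_{A,H}^{\Sigma}\right)^{\perp}\subseteq\Triv_{A,H}^{\Sigma}$, the strategy is to take $N\in\left(\SemiPrj_{\Sigma}\right)^{\perp}$ and show $H(\Sigma^{n}N)=0$ for all $n$. The key is to find, for each $n$, a $\Sigma$-semiprojective object whose morphisms (up to homotopy, in the appropriate suspension) detect $H(\Sigma^{n}N)$. The natural candidate is built from the free functor: $C$ is left adjoint to $U$, $C(\underline{A}\text{-free})$ is projective in $\bm{C}_{A,H}^{H}$, hence is $\Sigma$-semiprojective (projectives lie in ${}^{\perp}\Triv_{A,H}^{\Sigma}$ by the previous proposition), and more generally the representable $A\#H$-module at an object $x$, call it $P_{x}$, is projective and satisfies $\bm{C}_{A,H}(P_{x},M)\cong M(x)$ as $H$-modules by the Yoneda/adjunction identification. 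Then $H(\bm{C}_{A,H}(P_{x},\Sigma^{n}N))\cong H(\Sigma^{n}N(x))$, and $H(\Sigma^{n}N)=\bigoplus_{x}H(\Sigma^{n}N(x))$. So it suffices to show $\bm{C}_{A,H}(P_{x},\Sigma^{n}N)$ is acyclic for all $n$, i.e.\ $\cT_{A,H}^{H}(P_{x},\Sigma^{n}N)=0$.

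To extract this vanishing from $N\in\left({}^{\perp}\Triv_{A,H}^{\Sigma}\right)^{\perp}$, the idea is to relate $\cT_{A,H}^{H}(P_{x},\Sigma^{n}N)$ to an Ext-group. A morphism $\varphi:P_{x}\to\Sigma^{n}N$ gives a mapping-cone extension $0\to\Sigma^{n}N\to C_{\varphi}\to\Sigma P_{x}\to 0$; since $\Sigma P_{x}\in\SemiPrj_{\Sigma}$ (Corollary after Proposition \ref{orthogonal_to_TrivH}), this extension must split, hence $\varphi\simeq 0$ by Lemma \ref{mapping_cone_of_null_homotopic_map}, provided $\Sigma^{n}N\in\left(\SemiPrj_{\Sigma}\right)^{\perp}$ — but $\left(\SemiPrj_{\Sigma}\right)^{\perp}$ is closed under $\Sigma^{n}$ by Lemma \ref{Ext_of_suspension} since $\SemiPrj_{\Sigma}$ is closed under $\Sigma^{n}$. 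Therefore every $\varphi:P_{x}\to\Sigma^{n}N$ is null-homotopic, so $\cT_{A,H}^{H}(P_{x},\Sigma^{n}N)=0$, giving $H(\Sigma^{n}N(x))=0$ for all $x$ and $n$, i.e.\ $N\in\Triv_{A,H}^{\Sigma}$.

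The main obstacle I expect is the identification $\bm{C}_{A,H}(P_{x},M)\cong M(x)$ \emph{as $H$-modules} (not merely as $k$-modules): one must check that the $H$-action on morphism spaces from Proposition \ref{module_category_of_modules_over_module_algebra}, transported along the adjunction isomorphism $\bm{C}_{A,H}^{H}(P_{x},M)\cong M(x)^{H}$ coming from $\bm{C}_{A,H}^{H}\simeq\lMod{(A\#H)}$, matches the intrinsic $H$-action on $M(x)$. This is a direct Sweedler-notation computation analogous to the verification in Proposition \ref{P_is_right_adjoint_to_U}, but it is the linchpin: without it the reduction of $H(\Sigma^{n}N)$ to homology of Hom-complexes into projective generators fails. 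A secondary point to handle carefully is that $A_{0}$ may be infinite, so $H(\Sigma^{n}N)=\bigoplus_{x\in A_{0}}H(\Sigma^{n}N(x))$ rather than a single term, but since the canonical homological functor commutes with direct sums (used already in Lemma \ref{Triv*_is_thick}) and each $P_{x}$ is $\Sigma$-semiprojective, the argument goes through objectwise.
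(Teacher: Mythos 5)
Your overall strategy for the substantive inclusion $\left({}^{\perp}\Triv_{A,H}^{\Sigma}\right)^{\perp}\subseteq\Triv_{A,H}^{\Sigma}$ is exactly the paper's: test $N$ against a $\Sigma$-semiprojective generator $P$, use closure of $(\SemiPrj_{\Sigma})^{\perp}$ under $\Sigma^{n}$ (Lemma \ref{Ext_of_suspension}) together with the mapping-cone splitting (Corollary \ref{from_perp_to_null_homotopic}, Lemma \ref{mapping_cone_of_null_homotopic_map}) to get $\cT_{A,H}^{H}(P,\Sigma^{n}N)=0$, and identify this group with $H(\Sigma^{n}N)$. (The first inclusion, by the way, needs no argument at all: $\bm{C}\subseteq\left({}^{\perp}\bm{C}\right)^{\perp}$ holds for any class $\bm{C}$ directly from the definitions, so your page of re-derivation via $A$-split triangles is correct but superfluous.)

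The genuine problem is your choice of detecting object. You take $P_{x}$ to be ``the representable $A\#H$-module at $x$'' and assert both that it is projective in $\bm{C}_{A,H}^{H}$ and that $\bm{C}_{A,H}(P_{x},M)\cong M(x)$ as $H$-modules; no single object has both properties. The representable $A\#H$-module is $(A\#H)(x,-)=A(x,-)\otimes H$, which is indeed projective in $\lMod{(A\#H)}\cong\bm{C}_{A,H}^{H}$, but for it one has $\bm{C}_{A,H}\bigl(A(x,-)\otimes H,M\bigr)\cong(\lMod{k})(H,M(x))$, which is $\Sigma$-acyclic for \emph{every} $M$ (it is of the form treated in Lemma \ref{P_is_acyclic}); equivalently $A(x,-)\otimes H$ is a cone, hence zero in $\cT_{A,H}^{H}$, so $\cT_{A,H}^{H}(P_{x},\Sigma^{n}N)=0$ holds vacuously and detects nothing --- your argument would ``prove'' that every object is $\Sigma$-acyclic. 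The object that makes the proof work is the representable $\underline{A}$-module $A(x,-)$ equipped with the $H$-action inherited from the enrichment of $A$ (equivalently $A(x,-)\otimes k$ with $k$ trivial); for this one the Yoneda map $f\mapsto f(1_{x})$ is $H$-linear (using $h\cdot 1_{x}=\varepsilon(h)1_{x}$), giving $\bm{C}_{A,H}(P_{x},M)\cong M(x)$ as $H$-modules, and $P_{x}$ is $\Sigma$-semiprojective (by the Corollary to Lemma \ref{Sigma-semiprojectivity}: $U(P_{x})$ is projective and $\bm{C}_{A,H}(P_{x},T)\cong T(x)$ is $\Sigma$-acyclic whenever $T$ is) though typically \emph{not} projective in $\bm{C}_{A,H}^{H}$. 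Since membership in $\SemiPrj_{\Sigma}={}^{\perp}\Triv_{A,H}^{\Sigma}$ is all your splitting argument needs, replacing your $P_{x}$ by this object repairs the proof; the paper does precisely this with the single generator $A\otimes k=\bigoplus_{x}A(x,-)$, and your objectwise variant is, if anything, slightly cleaner when $A_{0}$ is infinite. You correctly flagged the $H$-equivariance of the identification $\bm{C}_{A,H}(P_{x},M)\cong M(x)$ as the linchpin, but as stated, for your $P_{x}$ it is false rather than merely unverified.
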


\begin{proof}
 By definition, we have an inclusion
 \[
 \Triv_{A,H}^{\Sigma}\subset
 \left({}^{\perp}\Triv_{A,H}^{\Sigma}\right)^{\perp}. 
 \]
 We prove
 \[
  \left(\SemiPrj_{\Sigma}\right)^{\perp} =
 \left({}^{\perp}\Triv_{A,H}^{\Sigma}\right)^{\perp} \subset 
 \Triv_{A,H}^{\Sigma}.  
 \]

 Suppose $T\in \left(\SemiPrj_{\Sigma}\right)^{\perp}$. Then
 for any $P\in \SemiPrj_{\Sigma}$,
 $\Ext^{1}_{\bm{C}_{A,H}^{H}}(P,T)=0$.
 Let $P=A\otimes k$, where $k$ is regarded as a trivial $H$-module.
 The $\underline{A}$-module structure is given by the composition of
 morphisms of $A$.
 Note that $A\otimes k$ is $\Sigma$-semiprojective, since for any
 $\Sigma$-quism $f:M\to N$, the commutativity of the diagram
 \[
 \xymatrix{
  \bm{C}_{A,H}(P,\Sigma^{n}M) \ar[d]_{\cong}
   \ar[rr]^{\bm{C}_{A,H}(P,\Sigma^{n}f)} 
   & & \bm{C}_{A,H}(P,\Sigma^{n}N) \ar[d]^{\cong} \\ 
   (\lMod{k})(k,\Sigma^{n}M) \ar[d]_{\cong} \ar[rr]
   & & (\lMod{k})(k,\Sigma^{n}N) \ar[d]^{\cong} \\
   \Sigma^{n}M \ar[rr]_{\Sigma^{n}f}
   & & \Sigma^{n}N   
 }
 \]
 allows us to identify $\bm{C}_{A,H}(P,\Sigma^{n}f)$ with $\Sigma^{n}f$.
 
 Since $\SemiPrj_{\Sigma}$ is closed under $\Sigma^{n}$ for all
 $n\in\Z$, so does $(\SemiPrj_{\Sigma})^{\perp}$ by Lemma
 \ref{Ext_of_suspension} and we have 
 $\Ext^{1}_{\bm{C}_{A,H}^{H}}(P,\Sigma^{n}T)=0$ for all $n\in\Z$. 
 By Corollary \ref{from_perp_to_null_homotopic}, we have
 $\cT_{A,H}^{H}(P,\Sigma^{n}T)=0$ for all $n\in\Z$.
 Under the identification
 \[
 \cT_{A,H}^{H}(P,\Sigma^{n}T) = H(\bm{C}_{A,H}(P,\Sigma^{n}T)) \cong
 H((\lMod{k})(k,\Sigma^{n}T)) \cong H(\Sigma^{n}T),
 \]
 this implies that $T$ belongs to $\category{Triv}^{\Sigma}_{A,H}$.
\end{proof}

\begin{corollary}
 \label{Abelian_Hovey_triple_generated_by_TrivH}
 The triple
 $\left(\SemiPrj_{\Sigma},\Triv_{A,H}^{\Sigma},\bm{C}_{A,H}^{H}\right)$
 is a Hovey triple in $\bm{C}_{A,H}^{H}$. 
\end{corollary}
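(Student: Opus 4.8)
The plan is to obtain this corollary as a direct consequence of Lemma \ref{Abelian_Hovey_triple_generated_by_Triv} together with the three propositions of this subsection. First I would check that $\bm{C}_{A,H}^{H}$ meets the standing hypotheses of that lemma: by Proposition \ref{equivariant_module_as_module_over_smash_product} it is equivalent to $\lMod{(A\#H)}$, the category of $k$-linear functors $A\#H\to\lMod{k}$, which is a Grothendieck Abelian category with enough projectives (the representable modules $(A\#H)(x,-)$, $x\in A_{0}$, are projective by the Yoneda lemma, and their direct sum is a generator; colimits are computed pointwise, so filtered colimits are exact). By Lemma \ref{Triv*_is_thick}, $\Triv_{A,H}^{\Sigma}$ is a thick subcategory of $\bm{C}_{A,H}^{H}$. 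Hence Lemma \ref{Abelian_Hovey_triple_generated_by_Triv} applies with $\Triv=\Triv_{A,H}^{\Sigma}$, reducing the claim that $\bigl({}^{\perp}\Triv_{A,H}^{\Sigma},\Triv_{A,H}^{\Sigma},\bm{C}_{A,H}^{H}\bigr)$ is a Hovey triple to the two identities ${}^{\perp}\Triv_{A,H}^{\Sigma}\cap\Triv_{A,H}^{\Sigma}=\Prj(\bm{C}_{A,H}^{H})$ and $\bigl({}^{\perp}\Triv_{A,H}^{\Sigma}\bigr)^{\perp}=\Triv_{A,H}^{\Sigma}$.

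These two identities are precisely the statements of the two propositions that follow Proposition \ref{orthogonal_to_TrivH} in this subsection, so nothing further is needed to verify them. It then remains only to rewrite the first slot of the triple: Proposition \ref{orthogonal_to_TrivH} gives ${}^{\perp}\Triv_{A,H}^{\Sigma}=\SemiPrj_{\Sigma}$, whence $\bigl(\SemiPrj_{\Sigma},\Triv_{A,H}^{\Sigma},\bm{C}_{A,H}^{H}\bigr)$ is a Hovey triple, which is the assertion. Feeding this triple into Hovey's Theorem \ref{characterization_of_Abelian_model_structure} and invoking Lemma \ref{weak_equivalence_in_Abelian_model_structure} then recovers the Abelian model structure of Theorem \ref{main_theorem:model_structure}, in which cofibrations are the monomorphisms with $\Sigma$-semiprojective cokernel, every object is fibrant, and the weak equivalences are the $\Sigma$-quisms.

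I do not expect a genuine obstacle here: this corollary is a packaging step, and its mathematical content was carried by Proposition \ref{orthogonal_to_TrivH} and the two subsequent propositions, together with the general cotorsion-pair machinery in Lemma \ref{Abelian_Hovey_triple_generated_by_Triv}. The one point that warrants a sentence of justification — and the only place a careless argument could slip — is the verification that $\bm{C}_{A,H}^{H}$ really is a Grothendieck Abelian category with enough projectives; this is why I would route the argument through the identification $\bm{C}_{A,H}^{H}\simeq\lMod{(A\#H)}$ rather than arguing about equivariant modules directly.
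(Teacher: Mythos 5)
Your proposal is correct and follows essentially the same route as the paper: the text preceding Proposition \ref{orthogonal_to_TrivH} already reduces the corollary to Lemma \ref{Abelian_Hovey_triple_generated_by_Triv} via the identification $\bm{C}_{A,H}^{H}\cong\lMod{(A\#H)}$ and the thickness of $\Triv_{A,H}^{\Sigma}$, and the two displayed identities are exactly the two propositions you cite, with Proposition \ref{orthogonal_to_TrivH} supplying the rewriting of ${}^{\perp}\Triv_{A,H}^{\Sigma}$ as $\SemiPrj_{\Sigma}$. Your explicit verification that $\lMod{(A\#H)}$ is Grothendieck with enough projectives is a welcome elaboration of a point the paper only asserts.
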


Now Theorem \ref{main_theorem:model_structure} is a corollary to this fact.
By Hovey's correspondence this model structure is described as follows.

\begin{corollary}
 \label{equivariant_projective_model_structure}
 There exists an Abelian model structure on $\bm{C}_{A,H}^{H}$ with the
 following properties:
 \begin{enumerate}
  \item A morphism $f: M\to N$ is a cofibration if and only if it is a
	monomorphism and $\Coker f$ is $\Sigma$-semiprojective.
  \item A morphism $f: M\to N$ is a fibration if and only if it is an
	epimorphism. 
  \item A morphism $f:M\to N$ is a weak equivalence if and only if it is
	a $\Sigma$-quism.
 \end{enumerate}
 In particular, all objects are fibrant and cofibrant objects are
 $\Sigma$-semiprojectives.
\end{corollary}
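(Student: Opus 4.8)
The plan is to deduce everything from Corollary~\ref{Abelian_Hovey_triple_generated_by_TrivH} via Hovey's correspondence. First I would apply Theorem~\ref{characterization_of_Abelian_model_structure} to the Hovey triple $(\SemiPrj_{\Sigma},\Triv_{A,H}^{\Sigma},\bm{C}_{A,H}^{H})$, obtaining a unique Abelian model structure on $\bm{C}_{A,H}^{H}$ whose trivial, cofibrant and fibrant objects are exactly $\Triv_{A,H}^{\Sigma}$, $\SemiPrj_{\Sigma}$ and all objects, respectively. Items (1) and (2) of the corollary, and the final sentence, are then read off from the description of cofibrations and fibrations attached to a Hovey triple (the list following Theorem~\ref{characterization_of_Abelian_model_structure}): a cofibration is a monomorphism with cokernel in $\SemiPrj_{\Sigma}$, and a fibration is an epimorphism with kernel in $\Fib=\bm{C}_{A,H}^{H}$, i.e.\ simply an epimorphism; in particular every object is fibrant and the cofibrant objects are precisely the $\Sigma$-semiprojective ones. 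I would also record here, recalling from \S\ref{orthogonality} that $\SemiPrj_{\Sigma}\cap\Triv_{A,H}^{\Sigma}=\Prj(\bm{C}_{A,H}^{H})$, that the trivial cofibrations are the monomorphisms with projective cokernel and the trivial fibrations are the epimorphisms with $\Sigma$-acyclic kernel.

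The substance is item (3), and the key preliminary point is that the canonical homological functor is homological on the whole Abelian category $\bm{C}_{A,H}^{H}$, not merely on $\cT_{A,H}^{H}$: I claim every short exact sequence $0\to L\to M\to N\to 0$ in $\bm{C}_{A,H}^{H}$ induces a long exact sequence
\[
\cdots \rarrow{} H(\Sigma^{n}L) \rarrow{} H(\Sigma^{n}M) \rarrow{} H(\Sigma^{n}N) \rarrow{} H(\Sigma^{n+1}L) \rarrow{} \cdots .
\]
The argument is that the exact forgetful functor $\bm{C}_{A,H}^{H}\to\lMod{H}$ sends this to a short exact sequence of $H$-modules, which is automatically split over the field $k$; by Lemma~\ref{Hopf_mapping_cone} in the case $A=k$ it therefore determines a distinguished triangle in $\cT_{k,H}^{H}$, and composing with the homological functor $H=\cT_{k,H}^{H}(k,-)$ of Proposition~\ref{H-homology_is_homological} (and using that the forgetful functor commutes with $\Sigma$) yields the long exact sequence.

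With this in hand both inclusions of (3) are short. If $f$ is a weak equivalence, then by Lemma~\ref{weak_equivalence_in_Abelian_model_structure} I may write $f=p\circ i$ with $i$ a monomorphism whose cokernel lies in $\Triv_{A,H}^{\Sigma}$ and $p$ an epimorphism whose kernel lies in $\Triv_{A,H}^{\Sigma}$; feeding the defining short exact sequences of $i$ and of $p$ into the long exact sequence above shows $H(\Sigma^{n}i)$ and $H(\Sigma^{n}p)$ are isomorphisms for every $n$, hence so is $H(\Sigma^{n}f)$, so $f$ is a $\Sigma$-quism. Conversely, let $f:M\to N$ be a $\Sigma$-quism, and factor $f=p\circ j$ with $j$ a cofibration and $p$ a trivial fibration. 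Then $p$ is a weak equivalence, hence a $\Sigma$-quism by the first part, so $j$ is a $\Sigma$-quism by two-out-of-three for isomorphisms in $\lMod{k}$. Applying the long exact sequence to $0\to M\to W\to\Coker j\to 0$ then forces $H(\Sigma^{n}\Coker j)=0$ for all $n$, i.e.\ $\Coker j\in\Triv_{A,H}^{\Sigma}$; combined with $\Coker j\in\SemiPrj_{\Sigma}$ (since $j$ is a cofibration) this gives $\Coker j\in\SemiPrj_{\Sigma}\cap\Triv_{A,H}^{\Sigma}=\Prj(\bm{C}_{A,H}^{H})$, so $j$ is a trivial cofibration and in particular a weak equivalence; hence $f=p\circ j$ is a weak equivalence.

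I expect the only genuine obstacle to be the preliminary observation of the second paragraph — that $H$ is homological on all of $\bm{C}_{A,H}^{H}$ and not just on $\cT_{A,H}^{H}$, where Lemma~\ref{Hopf_mapping_cone} a priori produces triangles only from $A$-split sequences. Once one notices that forgetting to $\lMod{H}$ makes every short exact sequence split over $k$, this is immediate, and the remainder of (3) is a formal manipulation of the factorization axioms and two-out-of-three; together with the opening paragraph this also yields Theorem~\ref{main_theorem:model_structure} as stated.
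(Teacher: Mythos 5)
Your proof is correct and follows the same overall route as the paper: items (1), (2) and the final sentence are read off from Hovey's correspondence (Theorem \ref{characterization_of_Abelian_model_structure}) applied to the Hovey triple of Corollary \ref{Abelian_Hovey_triple_generated_by_TrivH}, and item (3) is obtained from Lemma \ref{weak_equivalence_in_Abelian_model_structure} in one direction and from the cofibration/trivial-fibration factorization in the other. The one point where you genuinely diverge is the justification of the long exact sequences. The paper, in the converse direction, uses Lemma \ref{Sigma-semiprojectivity} to see that $0\to M\to E\to\Coker i\to 0$ is $A$-split (its cokernel being $\underline{A}$-projective) and hence defines a triangle in $\cT_{A,H}^{H}$, while in the forward direction it simply asserts that $p$ and $i$ are $\Sigma$-quisms. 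You instead observe once and for all that every short exact sequence in $\bm{C}_{A,H}^{H}$ becomes $k$-split after forgetting to $\lMod{H}$, hence yields a distinguished triangle in $\cT_{k,H}^{H}$ and a long exact sequence for the canonical homological functor. This is correct, is compatible with $\Sigma$ since the forgetful functor commutes with it, and is slightly more uniform: it handles both directions at once and in particular supplies the justification that the paper's forward direction leaves implicit (the kernel of $p$ and cokernel of $i$ appearing there need not be $\underline{A}$-projective, so Lemma \ref{Hopf_mapping_cone} cannot be invoked in $\cT_{A,H}^{H}$ directly). Your final step in the converse direction, identifying $\Coker j$ as projective via $\SemiPrj_{\Sigma}\cap\Triv_{A,H}^{\Sigma}=\Prj(\bm{C}_{A,H}^{H})$ and concluding that $j$ is a trivial cofibration, is a harmless extra turn; as in the paper, it already suffices to note that $f=p\circ j$ is then a factorization of the type characterizing weak equivalences in Lemma \ref{weak_equivalence_in_Abelian_model_structure}.
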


\begin{proof}
 It only remains to verify that weak equivalences agree with
 $\Sigma$-quisms. 
 Suppose $f$ is a weak equivalence.
 By Lemma \ref{weak_equivalence_in_Abelian_model_structure}, it factors
 as $f=p\circ i$ with $\Ker p,\Coker \in \Triv_{A,H}^{\Sigma}$. In
 particular both $p$ and $i$ are $\Sigma$-quisms and thus so is $f$.
 Conversely suppose $f:M\to N$ is a $\Sigma$-quism. By the model
 structure, it factors as $f=p\circ i$ with $i:M\to E$ a cofibration and
 $p:E\to N$ a trivial fibration. 
 By the characterization of trivial fibrations in an Abelian model
 category, $p$ is an epimorphism with $\Ker p\in\Triv_{A,H}^{\Sigma}$.
 And $i$ is a monomorphism with $\Coker i\in \SemiPrj_{\Sigma}$.
 By Lemma \ref{Sigma-semiprojectivity}, the sequence
 \[
  0 \rarrow{} M \rarrow{i} E \rarrow{} \Coker i \rarrow{} 0
 \]
 is an $A$-split sequence. Hence defines a triangle in $\cT_{A,H}^{H}$.
 Since both $f$ and $p$ are $\Sigma$-quisms, so is $i$. The long exact
 sequence of homology induced by this triangle implies that
 $\Coker i\in\Triv_{A,H}^{\Sigma}$. 
\end{proof}

\begin{definition}
 Under this model structure, the full subcategory of compact cofibrant
 objects in $\bm{C}_{A,H}^{H}$ is denoted by $\Perf_{A,H}^{H}$. Objects
 of this category are called \emph{perfect $H$-equivariant $A$-modules}.
 Cofibrations and weak equivalences in this model structure make
 $\Perf_{A,H}^{H}$ into a Waldhausen category, whose algebraic
 $K$-theory is denoted by $K(A,H)$ and is called the \emph{Hopfological
 algebraic $K$-theory of $A$}. 
\end{definition}

\begin{remark}
 Kaygun and Khalkhali \cite{math/0606341} introduced the notion of
 $H$-equivariantly projective $A$-modules and used the exact category of
 $H$-equivariantly projective $A$-modules to define the Hopf-cyclic
 homology of $A$ .
 We may use this exact category to define an algebraic $K$-theory, which
 should be regarded as a generalization of Thomason's equivariant
 $K$-theory \cite{Thomason1987} and should be called the
 \emph{$H$-equivariant $K$-theory of $A$}.
 We note, however, that this $K$-theory is different from ours.
\end{remark}

It should be noted that our Waldhausen category can be also obtained
from the cotorsion pair $(\SemiPrj_{\Sigma},\bm{C}_{A,H}^{H})$
by using a recent work of Sarazola's \cite{1911.00613}. Sarazola's work
also suggests the existence of another interesting cotorsion
pair in $\bm{C}_{A,H}^{H}$ if we replace the structure of exact category as
follows. 

\begin{definition}
 The class of extensions in $\bm{C}_{A,H}^{H}$ that are split as
 $A$-modules is denoted by $\cE^{\mathrm{split}}$.
 We obviously obtain an exact category
 $(\bm{C}_{A,H}^{H},\cE^{\mathrm{split}})$. Let us denote this exact
 category by $\bm{C}_{A,H}^{H,\mathrm{split}}$.
\end{definition}

\begin{proposition}
 \label{cotorsion_pair_from_contractible_objects}
 Let $\Cntr_{A,H}^{H}$ be the class of $H$-equivariant $A$-modules
 that are contractible in $\lMod{H}$. 
 Then the pair $(\bm{C}_{A,H}^{H},\Cntr_{A,H}^{H})$
 is a complete cotorsion pair in $\bm{C}_{A,H}^{H,,\mathrm{split}}$.
\end{proposition}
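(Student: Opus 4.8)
The plan is to recognise $(\bm{C}_{A,H}^{H},\Cntr_{A,H}^{H})$ as the ``injective'' categorical cotorsion pair of the exact category $\bm{C}_{A,H}^{H,\mathrm{split}}$, in the spirit of Example \ref{categorical_cotorsion_pair}. In \emph{any} exact category, the pair $\bigl(\text{all objects},\,\Inj\bigr)$ — with $\Inj$ the class of objects $I$ such that every admissible monomorphism out of $I$ splits — is automatically a cotorsion pair: $\Inj$ is by definition the right orthogonal of everything, and the left orthogonal of $\Inj$ is everything, again by definition. This cotorsion pair automatically has enough projectives (for each $X$ the conflation $0\rarrow{}0\rarrow{}X\rarrow{\mathrm{id}}X\rarrow{}0$ has kernel $0\in\Inj$), so its completeness is equivalent to having enough injectives, i.e.\ to every object admitting an admissible monomorphism into an object of $\Inj$. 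Thus the proposition reduces to the following two assertions:
\begin{enumerate}
 \item[(I)] $\Cntr_{A,H}^{H}$ is exactly the class $\Inj$ of injective objects of $\bm{C}_{A,H}^{H,\mathrm{split}}$;
 \item[(II)] every object of $\bm{C}_{A,H}^{H}$ admits an $A$-split monomorphism into an object of $\Cntr_{A,H}^{H}$.
\end{enumerate}
(Given (I), both cotorsion-pair axioms hold, since everything is left orthogonal to $\Inj$; and (II) is then exactly completeness.)

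For (II), and for one half of (I), I would use the standard extension
\[
 0 \rarrow{} X \rarrow{i_{X}} C(X) \rarrow{p_{X}} \Sigma(X) \rarrow{} 0 .
\]
It is $A$-split: forgetting the $H$-action, $i_{X}$ becomes $\mathrm{id}_{U(X)}\otimes\lambda$, which is split by $\mathrm{id}_{U(X)}\otimes\phi$ for any $k$-linear $\phi:H\to k$ with $\phi(\lambda)=1$. And $C(X)=X\otimes H\in\Cntr_{A,H}^{H}$: its underlying $H$-module is free by Lemma \ref{switching_lambda} together with Lemma \ref{tensoring_with_H}, hence projective, hence contractible in $\lMod{H}$ — recall that the contractible objects of $\lMod{H}$ are precisely the projective $H$-modules, so that $\ho(\lMod{H})$ is the stable module category of $H$. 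This proves (II); and it proves the easy half of (I): if $M\in\Inj$, then the $A$-split monomorphism $i_{M}$ splits, so $M$ is a direct summand of $C(M)$ in $\bm{C}_{A,H}^{H}$, so its underlying $H$-module is a summand of a free one, whence $M\in\Cntr_{A,H}^{H}$.

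The remaining point — which I expect to be the only real obstacle — is the other half of (I): every $M\in\Cntr_{A,H}^{H}$ is injective in $\bm{C}_{A,H}^{H,\mathrm{split}}$. So let $j:M\hookrightarrow Z$ be an $A$-split monomorphism in $\bm{C}_{A,H}^{H}$ and $\rho:Z\to M$ an $\underline A$-linear retraction; one must produce a retraction in $\bm{C}_{A,H}^{H}$. Averaging $\rho$ over $H$ does not suffice on its own: with the $H$-action of Proposition \ref{module_category_of_modules_over_module_algebra} on $\bm{C}_{A,H}(Z,M)=(\lMod{\underline A})(Z,M)$, the element $\lambda\cdot\rho$ is $H$-invariant, hence lies in $\bm{C}_{A,H}^{H}(Z,M)$, but precomposition with $j$ is $H$-linear and carries $\rho$ to $1_{M}$, so $(\lambda\cdot\rho)\circ j=\varepsilon(\lambda)\,1_{M}=0$ once $H$ is non-semisimple. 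The hypothesis on $M$ must be used to correct this. Since $H$ is finite-dimensional it is self-injective (in fact Frobenius — cf.\ the bijectivity of the antipode recalled before Proposition \ref{P_is_right_adjoint_to_U}), so the projective $H$-module underlying $M$ is also $H$-injective; hence $0\to M\to Z\to Z/M\to0$ splits in $\lMod{H}$, giving an $H$-linear (but not $\underline A$-linear) retraction, while the contractibility of $M$ in $\lMod{H}$ gives an $H$-linear contraction $\theta:M\otimes H\to M$. One then assembles the required $H$-equivariant, $\underline A$-linear retraction out of $\rho$, the $H$-linear retraction, and $\theta$, the failure of $\lambda\cdot\rho$ along $j$ being a controlled obstruction killed using these data and the Sweedler-calculus identities for $\lambda$ and $S$. (In the language of Proposition \ref{P_is_right_adjoint_to_U} this reads: $E(N)$ is injective in $\bm{C}_{A,H}^{H,\mathrm{split}}$ for every $\underline A$-module $N$, as the value of the right adjoint $E$ of the exact functor $U$ on an object of the split exact category $\lMod{\underline A}$; the unit $\eta_{M}:M\to E(U(M))$ is always an $A$-split monomorphism by a triangle identity; and one must split $\eta_{M}$ when the underlying $H$-module of $M$ is projective.) Carrying out this last construction is the heart of the proof; with it, (I) and (II) hold and the proposition follows.
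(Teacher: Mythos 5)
Your reduction is set up correctly, and two of its three pieces go through: the observation that $\bigl(\text{all objects},\Inj\bigr)$ is automatically a cotorsion pair in an exact category, the proof of enough injectives via the $A$-split monomorphism $i_{X}\colon X\to C(X)$ into the contractible object $C(X)$, and the implication ``injective $\Rightarrow$ contractible'' obtained by splitting $i_{M}$ directly (this last is in fact more economical than the paper's route, which reaches the same conclusion by applying $\Sigma^{-1}$ to the cone extension and pulling back). The problem is the step you yourself flag as ``the heart of the proof'': the implication $T\in\Cntr_{A,H}^{H}\Rightarrow\Ext^{1}_{\bm{C}_{A,H}^{H,\splt}}(M,T)=0$ for all $M$ is never established. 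The closing paragraph --- ``one assembles the required retraction out of $\rho$, the $H$-linear retraction, and $\theta$, the failure of $\lambda\cdot\rho$ along $j$ being a controlled obstruction killed using \dots\ Sweedler-calculus identities'' --- is a declaration of intent, not a construction; no formula is produced and no reason is given that one exists. Since this implication is at once the hard half of the identification $\Cntr_{A,H}^{H}=\Inj$ and the whole content of the axiom $\bm{C}_{A,H}^{H}={}^{\perp}\Cntr_{A,H}^{H}$, the proposal does not prove the proposition. Your correct observation that the naive average $\lambda\cdot\rho$ precomposes with $j$ to give $\varepsilon(\lambda)1_{M}=0$ shows exactly why something nontrivial is needed here, but diagnosing the obstruction is not the same as killing it.

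The paper closes this step structurally rather than by Sweedler calculus, and that is the route you should take. A contracting homotopy for $T$ is a morphism $\varphi\colon C(T)\to T$ with $\varphi\circ i_{T}=1_{T}$, i.e.\ it exhibits $T$ as a retract of the cone $C(T)=T\otimes H$. Given an $A$-split extension with $T$ at one end, one transports it along this retraction to an $A$-split extension with $C(T)$ at that end (the paper pulls back along $\varphi$ when $T$ is the quotient; for your injectivity statement one pushes out along $i_{T}$ when $T$ is the sub), splits the transported extension by Lemma \ref{cone_is_perp_to_everything} --- which is precisely where the Frobenius property of $H$ is already packaged --- and composes the resulting splitting with $i_{T}$ or $\varphi$ to split the original extension. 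This replaces your ``obstruction'' step entirely. Two caveats if you carry it out: (i) the injectivity direction needs the dual of Lemma \ref{cone_is_perp_to_everything}, namely that $A$-split extensions with \emph{sub}object of the form $Z\otimes H$ split, which is not the statement recorded in the paper and requires its own (easy, dual) argument; and (ii) the transport only makes sense if $\varphi$ is a morphism of $\bm{C}_{A,H}^{H}$, i.e.\ $\underline{A}$-linear as well as $H$-linear, whereas contractibility in $\lMod{H}$ only supplies an $H$-linear $\varphi$ --- so one must either work with contractibility in $\bm{C}_{A,H}^{H}$ or justify separately why the weaker hypothesis suffices. Your own proposal inherits this same ambiguity, since your $\theta$ is likewise only $H$-linear.
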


\begin{proof}
 The completeness is obvious from the definition.

 Let us verify that $(\bm{C}_{A,H}^{H},\Cntr_{A,H}^{H})$ is a cotorsion
 pair. 
 Suppose $\Ext^{1}_{\bm{C}_{A,H}^{H,\mathrm{split}}}(M,T)=0$ for all
 $M$. We show that $T$ is contractible.
 Consider the extension
 \[
  0 \rarrow{} T \rarrow{i_{T}} C(T) \rarrow{} \Sigma(T) \rarrow{} 0.
 \]
 This is an $A$-split extension.
 Apply $\Sigma^{-1}$ and take the pullback along the map
 $T\to \Sigma^{-1}\Sigma(T)$ which induces an isomorphism in
 $\cT_{A,H}^{H}$ by Lemma \ref{Sigma} to obtain
 \[
  \xymatrix{
 0 \ar[r] & \Sigma^{-1}(T) \ar[r] & \Sigma^{-1}C(T) \ar[r] & 
 \Sigma^{-1}\Sigma(T) \ar[r] & 0  \\
 0 \ar[r] & \Sigma^{-1}(T) \ar@{=}[u] \ar[r] & E \ar[u] \ar[r] & T \ar[u]
 \ar[r] & 0.
 }
 \]
 By assumption, the bottom sequence splits and thus $T$ is a retract of
 $E$. Since these are $A$-split exact sequences, they define triangles
 in $\cT_{A,H}^{H}$. 
 Since $T\to \Sigma^{-1}\Sigma(T)$ is an isomorphism in $\cT_{A,H}^{H}$,
 so is $E\to \Sigma^{-1}C(T)$.
 Since $\Sigma^{-1}C(T)$ is contractible, so is $E$. As a retract of a
 contractible object, $T$ is contractible.

 Conversely, suppose $T$ is contractible. We have a commutative diagram
 \[
 \xymatrix{
 T \ar[dr]_{1_{T}} \ar[rr]^{1_{T}} & & T \\
 & C(T). \ar[ur]_{\varphi} & 
 }
 \]
 For an $A$-split sequence
 \[
  0 \rarrow{} F \rarrow{} E \rarrow{} T \rarrow{} 0
 \]
 take the pullback along $\varphi$ to obtain
 \[
  0 \rarrow{} F \rarrow{} \varphi^{*}(E) \rarrow{} C(T) \rarrow{} 0.
 \]
 By Lemma \ref{cone_is_perp_to_everything} this sequence splits.
 Let $s:C(T)\to \varphi^{*}(E)$ be section. Then the composition
 \[
  T \rarrow{i_{T}} C(T) \rarrow{s} \varphi^{*}(E) \rarrow{} E
 \]
 is a section of $E\to T$.
\end{proof}

\begin{remark}
 This is an analogue of Example 8.3 in Sarazola's paper
 \cite{1911.00613}. 
\end{remark}

\appendix

\begin{bibdiv}
\begin{biblist}

\bibselect{Hopf}
 
\end{biblist}
\end{bibdiv}

\end{document}